\newtheorem{mtheorem}{Theorem}
\newtheorem{mcorollary}[mtheorem]{Corollary}
\newtheorem{theorem}{Theorem}[section]
\newtheorem{lemma}[theorem]{Lemma}
\newtheorem{claim}[theorem]{Claim}
\newtheorem{corollary}[theorem]{Corollary}
\newtheorem{proposition}[theorem]{Proposition}
\theoremstyle{definition}
\newtheorem{definition}[theorem]{Definition}
\newtheorem{remark}[theorem]{Remark}
\newtheorem*{remark*}{Remark}
\numberwithin{equation}{section}
\newcommand{\eqdef}{\stackrel{\scriptscriptstyle\rm def}{=}}
\DeclareMathOperator{\card}{card}
\DeclareMathOperator{\dist}{Dist}
\DeclareMathOperator{\Mod}{Mod}
\DeclareMathOperator{\conv}{conv}
\DeclareMathOperator{\cconv}{\overline{\conv}}
\DeclareMathOperator{\Diff}{Diff}
\DeclareMathOperator{\Wall}{Wall}
\def\cRTPH{\mathbf{RTPH}}
\def\cORTPH{\mathbf{MB}}
\def\Diff{\mathrm{Diff}^1}
\def\varkappa{\kappa}
\def\bZ{\mathbb{Z}}
\def\bR{\mathbb{R}}
\def\cC{\EuScript{C}}
\def\cU{\mathcal{U}}
\def\cF{\mathcal{F}}
\def\cV{\EuScript{V}}
\def\cW{\EuScript{W}}
\def\cM{\EuScript{M}}
\def\cN{\EuScript{N}}
\def\cL{\EuScript{L}}
\def\loc{{\rm loc}}
\def\ut{{\rm u}}
\def\st{{\rm s}}
\def\ss{{\rm ss}}
\def\cs{{\rm cs}}
\def\cu{{\rm cu}}
\def\c{{\rm c}}
\def\s{{\rm s}}
\def\u{{\rm u}}
\def\cu{{\rm cu}}
\def\uu{{\rm uu}}
\newcommand{\eps}{\varepsilon}
\def\safe{\tau}
\DeclareMathSymbol{\varnothing}{\mathord}{AMSb}{"3F}
\renewcommand{\emptyset}{\varnothing}
\begin{document}

\title[Weak$\ast$-entropy approximation of nonhyperbolic measures]{Weak$\ast$ and entropy approximation\\ of nonhyperbolic measures:\\ a geometrical approach}
\author[L.~J.~D\'iaz]{Lorenzo J. D\'\i az}
\address{Departamento de Matem\'atica PUC-Rio, Marqu\^es de S\~ao Vicente 225, G\'avea, Rio de Janeiro 22451-900, Brazil}
\email{lodiaz@mat.puc-rio.br}
\author[K.~Gelfert]{Katrin Gelfert}
\address{Instituto de Matem\'atica, Universidade Federal do Rio de Janeiro, Av. Athos da Silveira Ramos 149, Cidade Universit\'aria - Ilha do Fund\~ao, Rio de Janeiro 21945-909,  Brazil}\email{gelfert@im.ufrj.br}
\author[B.~Santiago]{Bruno Santiago}
\address{Instituto de Matem\'atica e Estat\'istica, Universidade Federal Fluminense, Rua Professor Marcos Waldemar de Freitas Reis, s/n, Bloco H - Campus do Gragoat\'a S\~ao Domingos, Niter\'oi 24210-201, Brazil}
\email{brunosantiago@id.uff.br}

\begin{abstract}
We study $C^1$-robustly transitive and nonhyperbolic diffeomorphisms having a partially hyperbolic splitting with one-dimensional central bundle whose strong un-/stable foliations are both minimal. {In dimension $3$, an important class of examples of such systems is given by those with a simple closed periodic curve tangent to the central bundle.} We prove that there is a $C^1$-open and dense subset of such diffeomorphisms such that every nonhyperbolic ergodic measure (i.e. with zero central exponent) can be approximated in the weak$\ast$ topology and in entropy by measures supported in basic sets with positive (negative) central Lyapunov exponent. 
Our method also allows to show how entropy changes across measures with  central Lyapunov exponent close to zero. 
We also prove that any nonhyperbolic ergodic measure is in the intersection of the convex hulls of the measures with positive central exponent and with negative central exponent.
 \end{abstract}

\begin{thanks}{
This study was financed in part by the Coordenação de Aperfeiçoamento de Pessoal de Nível Superior, Brasil (CAPES), Finance code 001 and also partially by
CNE-Faperj and CNPq grants (Brazil). The authors thank the referees for the comments and A. Tahzibi for calling attention to \cite{Tah:}.} 
\end{thanks}
\keywords{blender-horseshoe, entropy, ergodic measures, Lyapunov exponents, minimal foliations, partial hyperbolicity, transitivity, weak$\ast$ topology}
\subjclass[2000]{%
37D25, %Nonuniformly hyperbolic systems (Lyapunov exponents, Pesin theory, etc.)
37D30, % partially hyperbolic systems and dominated splittings
28D20, % Entropy and other invariants
28D99% Measure-theoretic ergodic theory
}

\maketitle
\tableofcontents

\section{Introduction}
{Consider a boundaryless Riemannian compact manifold $M$ and its space $\Diff(M)$ of $C^1$-diffeomorphisms endowed with  the uniform topology. 
We consider the $C^1$-open subset  of $\Diff(M)$,
denoted by $\cRTPH^1(M)$, formed by diffeomorphisms $f$ 
 with  a $C^1$-neighborhood $\cV_f$  whose elements
satisfy properties (H1)--(H3) that we proceed to describe: 
\begin{itemize}
\item[(H1)] Every diffeomorphism $g$ in $\cV_f$ is nonhyperbolic.
\item[(H2)] There is a partially hyperbolic splitting $TM=E^\ss \oplus E^\c\oplus E^\uu$ with three non-trivial bundles such that $E^\ss$ is uniformly contracting, $E^\c$ is one-dimensional, and $E^\uu$ is uniformly expanding.
%, and
%\item[(H3)] 
%there is a periodic simple closed curve tangent to $E^\c$.
\end{itemize}
To state hypothesis (H3), we first recall that by partial hyperbolicity, there exist invariant foliations $\cF^\ss$ and $\cF^\uu$ tangent to $E^\ss$ and $E^\uu$ and called  \emph{strong stable} and \emph{strong unstable foliations}, respectively (see \cite{HirPugShu:77}). 
\begin{itemize}
\item[(H3)]  The strong stable and the strong unstable foliations
of any  $g\in \cV_f$ are both \emph{minimal} (that is, every leaf of the foliation is dense in the whole space).
\end{itemize}
}

{Recall that a diffeomorphism is \emph{transitive} if it has a dense orbit and is \emph{$C^1$-robustly transitive} if it has a $C^1$-neighborhood consisting of transitive diffeomorphisms. 
Since the minimality of any strong foliation implies transitivity, condition (H3) implies that every diffeomorphism in  $\cRTPH^1(M)$ is transitive. Since (H3) requires this property in a neighborhood, every $f\in\cRTPH^1(M)$ is robustly transitive.
}

{To comment on our hypotheses, while (H1) and (H2) are quite natural, (H3) may at first seem to be rather restrictive. To describe a natural setting where $f$ satisfies the latter is a bit more elaborate and relies on the existence of a simple closed periodic%
\footnote{That is, there exists $n\ge1$ such that $f^n(\gamma_f)=\gamma_f$.} 
curve $\gamma_f$ tangent to $E^\c$. Since partially hyperbolic splittings have well defined continuations and the curve $\gamma_f$ is normally hyperbolic, it has well defined continuations in a $C^1$-neighborhood of $f$ (see  \cite{HirPugShu:77}).  Note that the existence of a closed periodic curve tangent to $E^\c$ immediately prevents hyperbolicity. The main examples of robustly transitive diffeomorphisms having simple closed periodic curves fall into two classes: those having an invariant foliation tangent to $E^\c$ consisting of circles 
(see \cite{Shu:71b,GorIly:99,BonDia:96}) and those having simultaneously closed and non-closed leaves tangent to $E^\c$. Examples of the latter are appropriate perturbations of the time-one map of a transitive Anosov flow \cite{BonDia:96} {and a certain class of diffeomorphisms  in \cite{BonGogPot:16} (involving a so-called Dehn twist and the time-one map of a hyperbolic geodesic flow).} 
}

{To return to the discussion of simultaneous minimality of both strong foliations, first assume that $\dim M=3$ and that $\cU$ is an open set of $\Diff(M)$ consisting of transitive diffeomorphisms $f$ satisfying (H2) and each having some closed periodic curve $\gamma_f$ tangent to $E^\c$ (thus satisfying (H1)). In this setting, by \cite{BonDiaUre:02}  there is a $C^1$-open and -dense subset of $\cU$ consisting of diffeomorphisms for which both foliations are \emph{minimal} and hence satisfy (H3). For examples in higher dimensions, as recently communicated \cite{Tah:}, robustly transitive perturbations of the time-one maps of Anosov flows (in any dimension) also provide examples of diffeomorphisms having simultaneously minimal foliations.
}

%Note also that transitivity and the existence of a periodic closed curve together imply that the neighborhood of $f$ consists of nonhyperbolic diffeomorphisms. 

%There are several types of examples of diffeomorphisms in  $\cRTPH^1(M)$:
%those having an invariant foliation tangent to $E^\c$ consisting of circles 
%(see \cite{Shu:71b,GorIly:99}) and those having simultaneously closed and non-closed leaves tangent to $E^\c$ (the main example are some perturbations of the time-one map of a transitive Anosov flow, \cite{BonDia:96}). 

{Note that there is an important class of nonhyperbolic partially hyperbolic robustly transitive systems, called DA-diffeomorphisms \cite{Man:78}, which \emph{a priori} do not fall into $\cRTPH^1(M)$ because they do not have closed curves tangent to the central bundle $E^\c$ and hence, so far, it is unknown if (H3) is satisfied.}

{
The next definition involves the notion of a blender-horseshoe, see Section~\ref{sec:blennn} for the precise definition and discussion.}

{
\begin{definition}[The set $\cORTPH^1(M)$]
The set\footnote{This notation refers to \emph{minimality} and existence of \emph{blender-horseshoes}.} 
$\cORTPH^1(M)$ is the subset of  $\cRTPH^1(M)$ consisting of diffeomorphisms with a
pair of  \emph{blender-horseshoes} (one contracting in the central direction and one expanding in the central direction).
\end{definition}}

\begin{remark}[Properties of $\cORTPH^1(M)$]
{
Conditions (H1) and (H2) imply that the set $\cORTPH^1(M)$  is  $C^1$-open and $C^1$-dense in
 $\cRTPH^1(M)$,  see  Proposition \ref{p.blenderopendense}. In fact, to get such blender-horseshoes, hypothesis (H3) is not used at all and, indeed, the existence of the blender-horseshoes is, besides some geometrical hypothesis, the (implicit) key element in \cite{BonDiaUre:02} to prove the minimality of the foliations  (even though the term ``blender-horseshoe'' was only coined later~\cite{BonDia:12}). We will explore the dynamics of these blender-horseshoes, see Section~\ref{sec:blennn}, which will also be an important ingredient in our constructions. For details see Proposition~\ref{p.blenderopendense} and Remark~\ref{r.notation}. 
 }

{
 Assume now that  $M$  has dimension three and consider the set
$\mathbf{RTC}^1(M)$
of robustly transitive diffeomorphisms of $M$ having a closed simple periodic curve
and a partially hyperbolic splitting with three bundles.
Then the set $\mathbf{MB}^1(M)$ is $C^1$-open and $C^1$-dense in
$\mathbf{RTC}^1(M)$, see \cite{BonDiaUre:02}.
 }\end{remark}

\begin{remark}[Essential hypotheses]
{
The proofs of our results do not involve any perturbation. 
%The hypotheses stated as above on one hand just aim to show that we are indeed dealing with a large class of systems. 
The essential hypotheses we do use for every $f$ under consideration are the following: 
\begin{itemize}
\item partial hyperbolicity with splitting $TM=E^\ss\oplus E^\c\oplus E^\uu$ and with one-dimensional center,
\item existence of a pair of blender-horseshoes, one contracting in the central direction and one expanding in the central direction,
\item minimality of both strong foliations.
\end{itemize}
}

{
Note that the simultaneous existence of blender-horseshoes of different type implies nonhyperbolicity. Further, minimality implies transitivity.
}
		
	{Observe that the robustness of the above properties comes along naturally. Indeed, partial hyperbolicity and existence of blender-horseshoes are both robust properties, while \emph{a priori} the minimality of the strong foliations is not. However, the existence of blender-horseshoes forces the robustness of minimality (this is indeed the heart of the proof in \cite{BonDiaUre:02}).}
\end{remark}

Nonhyperbolicity is closely related to the existence of zero Lyapunov exponents.  Given $f\in \Diff (M)$, a point $x\in M$ is  \emph{Lyapunov regular} if there are a positive integer $s(x)$, numbers $\chi_1(x)<\ldots<\chi_{s(x)}(x)$, called the \emph{Lyapunov exponents} of $x$, and a $Df$-invariant splitting $T_xM=\oplus_{i=1}^{s(x)}F_x^i$ such that for all $i=1,\ldots,s(x)$ and $v\in \cF^i_x$, $v\ne0$, we have
\[
	\lim_{n\to\pm\infty}\frac1n\log\,\lVert Df^n_x(v)\rVert
	= \chi_i(x).
\]
Note that in our partially hyperbolic setting there is some $\ell$ such that $\cF^\ell=E^\c$ and we denote the corresponding Lyapunov exponent by $\chi^\c$.

We denote by $\cM(f)$ the set of $f$-invariant probability measures of $f$ and by
$\cM_{\rm erg}(f)$ the subset of ergodic measures. We equip the space $\cM(f)$ with the weak$\ast$ topology.  Given $\mu\in \cM_{\rm erg}(f)$,  Oseledets' multiplicative ergodic theorem \cite{Ose:68} claims that the set of Lyapunov regular points has full measure and $s(\cdot)=s(\mu)$ and $\chi_i(\cdot)=\chi_i(\mu)$, $i=1,\ldots,s(\mu)$, are constant $\mu$-almost everywhere. The latter numbers are called the \emph{Lyapunov exponents} of $\mu$. If $\chi^\c(\mu)=0$ then $\mu$ is called \emph{nonhyperbolic}. Note that in our setting the other  exponents of $\mu$ are nonzero. We denote by $\cM_{{\rm erg},0}(f)$ the subset of $\cM_{\rm erg}(f)$ of nonhyperbolic measures.
Thus, the occurrence of a zero exponent is related to the central direction only and there is a  natural decomposition
\begin{equation*}%}\label{eq:ergdecompo}
	\cM_{\rm erg}(f)
	=\cM_{\rm erg,<0}(f) \cup\cM_{\rm erg,0}(f) \cup\cM_{\rm erg,>0}(f),
\end{equation*}
where $\cM_{\rm erg,<0}(f)$ and $\cM_{\rm erg,>0}(f)$ denote spaces of measures $\mu$ such that $\chi^\c(\mu)<0$ and $\chi^\c(\mu)>0$, respectively.

The exploration of nonhyperbolic ergodic measures is a very active research field which started with the pioneering work in \cite{GorIlyKleNal:05}. Note that by \cite{BocBonDia:16} there is a $C^1$-open and -dense subset of $\cRTPH^1(M)$ consisting of diffeomorphisms $f$ such that $\cM_{{\rm erg},0}(f)$ is nonempty and contains measures with positive entropy. The main focus of this paper is to study how nonhyperbolic ergodic measures insert  in the space of ergodic measures. The main result is how nonhyperbolic measures are weak$\ast$ and in entropy approached by hyperbolic ones which are supported on basic sets. {We also conclude about the topological structure of the space of ergodic measures. For previous results about the denseness of hyperbolic measures supported on periodic orbits, see \cite{BonZha:16}.} Our paper is a continuation of a line of arguments in \cite{DiaGelRam:17,DiaGelRam:17b} where these questions were studied in a skew-product setting and where a general axiomatic framework to attack this problem was introduced, see the discussion after Corollary~\ref{cortheocor:varprinc}. 

\begin{remark}\label{rem:Katok}
By a very classical result,  mainly started by Katok \cite{Kat:80,
KatHas:95}, every \emph{hyperbolic} ergodic measure can be approximated by periodic ones. Here one can consider approximation in the weak$\ast$ topology. Moreover, one can approximate by means of ergodic measures supported on basic sets which converge weak$\ast$ and in entropy, that is, given $\mu$ hyperbolic ergodic, there is a sequence $\Gamma_n$ of basic sets such that $\cM_{\rm erg}(f,\Gamma_n)\to \mu$ in the weak$\ast$ topology and that $h_{\rm top}(f,\Gamma_n)\to h(\mu)$. Katok's result was first shown for  $C^{1+\varepsilon}$ surface diffeomorphisms \cite[Supplement S.5]{KatHas:95}, but extends also to higher-dimensional manifolds and $C^1$- and dominated  diffeomorphisms (see, for example, \cite{Cro:11,LuzSan:13,Gel:16} and references therein and also \cite{WanSun:10}). Below we will present an analogous version for \emph{nonhyperbolic} ergodic measures. 
\end{remark}

Given $f\in \cORTPH^1(M)$ and a hyperbolic set $\Gamma\subset M$ of $f$, denote by $\cM(f,\Gamma)\subset\cM(f)$ the subset of measures supported on $\Gamma$. We define analogously $\cM_{\rm erg}(f,\Gamma)$. We say that a hyperbolic set $\Gamma$ is \emph{central contracting} (\emph{central expanding}) if on $T\Gamma$ the bundle $E^\ss\oplus E^\c$ is stable ($E^\c\oplus E^\uu$ is unstable).
Recall that a set is \emph{basic} if it is compact, $f$-invariant, hyperbolic, locally maximal, and  transitive.

Given a countable dense subset $\{\varphi_j\}_{j\ge1}$ of continuous (nonzero) functions on $M$, recall that 
\[
	D(\nu,\mu)
	\eqdef \sum_{j=1}^\infty2^{-j}\frac{1}{2\lVert\varphi_j\rVert_\infty}\left\lvert\int\varphi_j\,d\nu - \int\varphi_j\,d\mu\right\rvert,
	\quad\lVert\varphi\rVert
	\eqdef \sup\lvert\varphi\rvert,
\]
provides a metric which induces the weak$\ast$ topology on $\cM(f)$.

The following is a consequence of Theorem~\ref{teo:finally} which is stated under the minimal hypotheses which we  require to construct central expanding (contracting) basic sets as stated.

\begin{mtheorem}[Approximation in weak$\ast$ and entropy] \label{t.approx}
For every $f\in\cORTPH^1(M)$ every nonhyperbolic ergodic measure $\mu$ of $f$ has the following properties. For every $\delta>0$ and every $\gamma>0$ there exist a pair of basic sets $\Gamma^-$ being central contracting and $\Gamma^+$ being central expanding such that the topological entropy of $f$ on $\Gamma^\mp$ satisfy 
\[	
	h_{\rm top}(f,\Gamma^\mp) \in  [h(\mu)-\gamma, h(\mu)+\gamma].
\]	
Moreover, every measure $\nu^\mp \in\cM(f,\Gamma^\mp)$  is $\delta$-close to $\mu$. In particular, there are  hyperbolic measures $\nu^\mp\in\cM_{\rm erg}(f,\Gamma^\mp)$  satisfying 
$$
	\chi(\nu^-)\in(-\delta,0) 
	\quad \text{and} \quad  
	\chi(\nu^+)\in(0,\delta)
$$
and
$$
	h(\nu^\mp)\in [h(\mu)-\gamma, h(\mu)+\gamma].
$$
\end{mtheorem}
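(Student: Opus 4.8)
The plan is to establish a slightly more technical statement (the one referred to as Theorem~\ref{teo:finally}) and then read off Theorem~\ref{t.approx}; I describe the construction of the central contracting basic set $\Gamma^-$, the central expanding $\Gamma^+$ being obtained symmetrically by exchanging the two blender-horseshoes and the roles of $E^\s$ and $E^\u$. Fix a nonhyperbolic ergodic measure $\mu$ and $\delta,\gamma>0$. The idea is to realize $\Gamma^-$ as a locally maximal set carrying a \emph{coded} structure: long ``neutral'' orbit blocks that shadow $\mu$, glued together by short controlled connectors, with one genuine excursion through the central contracting blender-horseshoe inserted once every $L$ blocks. The neutral blocks come from a Katok-type argument: although $\mu$ is nonhyperbolic, the bundles $E^\ss$ and $E^\uu$ are uniformly hyperbolic, so $\mu$ behaves like a hyperbolic measure in the strong directions, and for every small $\eta>0$ and every large $n$ one obtains a set $\cS_n$ of $(n,\eta)$-separated orbit blocks $x,fx,\dots,f^{n-1}x$ along which (i) the Birkhoff averages of the finitely many test functions $\varphi_j$, $j\le J(\delta)$, are $\eta$-close to $\int\varphi_j\,d\mu$; (ii) the central contribution $\frac1n\sum_{k<n}\log\lVert Df|_{E^\c(f^kx)}\rVert$ is $\eta$-small (this uses $\chi^\c(\mu)=0$); and (iii) $e^{n(h(\mu)-\eta)}\le\card\cS_n\le e^{n(h(\mu)+\eta)}$, the lower bound from the Katok entropy formula and the upper bound from covering a set of measure $>1-\eta$ by $(n,\eta/2)$-Bowen balls.

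Here the geometry enters. Using minimality of $\cF^\uu$ and of $\cF^\ss$, the endpoint of any neutral block can be connected to the initial point of any other by an orbit segment of uniformly bounded length $\le R$, chosen from a finite family and routed through the domain of the central contracting blender-horseshoe; the covering/absorption property of this blender (a suitable $\cu$-disk crossing its superposition region meets the stable set of the blender, see Section~\ref{sec:blennn}) makes such connections robust and keeps the glued set locally maximal. Prescribing that every $L$ consecutive neutral blocks be followed by a connector that genuinely traverses the blender-horseshoe, thereby contributing a definite amount of central contraction, and then concatenating freely all resulting itineraries, one obtains an irreducible subshift of finite type, hence a compact, $f$-invariant, transitive, locally maximal set $\Gamma^-$. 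Since $E^\ss\oplus E^\uu$ is uniformly hyperbolic and the average central derivative over each block of at most $Ln+R$ iterates is smaller than $1$ by a definite amount, domination upgrades this average contraction to uniform contraction of $E^\ss\oplus E^\c$ along $\Gamma^-$, so $\Gamma^-$ is a central contracting basic set.

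For the estimates: the coding has at most $e^{n(h(\mu)+\eta)}$ neutral symbols and a bounded number of connectors per period of length between $n$ and $Ln+R$, so $h_{\rm top}(f,\Gamma^-)\in[h(\mu)-\eta-O(R/n),\,h(\mu)+\eta+O(1/n)]$, which lies in $[h(\mu)-\gamma,h(\mu)+\gamma]$ for $\eta$ small and $n$ large. Any orbit in $\Gamma^-$ spends at most a fraction $R/(Ln)$ of its time outside neutral blocks, so the Birkhoff averages of each $\varphi_j$ along it stay within $\eta+O(R\lVert\varphi_j\rVert_\infty/(Ln))$ of $\int\varphi_j\,d\mu$; hence every $\nu\in\cM(f,\Gamma^-)$ satisfies $D(\nu,\mu)<\delta$ once $\eta$ is small, $J(\delta)$ large and $L$ large. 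Every ergodic $\nu^-\in\cM_{\rm erg}(f,\Gamma^-)$ is hyperbolic and central contracting, so $\chi^\c(\nu^-)<0$, while the rarity of the blender excursions gives $\chi^\c(\nu^-)>-\delta$, whence $\chi^\c(\nu^-)\in(-\delta,0)$. Finally, taking $\nu^-$ to be the measure of maximal entropy of $\Gamma^-$ (ergodic, with $h(\nu^-)=h_{\rm top}(f,\Gamma^-)$) yields a hyperbolic ergodic measure that is $\delta$-close to $\mu$, has $\chi^\c(\nu^-)\in(-\delta,0)$, and satisfies $h(\nu^-)\in[h(\mu)-\gamma,h(\mu)+\gamma]$; the case $\Gamma^+$ is identical with the central expanding blender-horseshoe.

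The main obstacle is the gluing step: turning the neutral orbit blocks of a \emph{nonhyperbolic} measure into a genuine, locally maximal, uniformly hyperbolic basic set with prescribed and arbitrarily small central exponent. One must produce the connectors robustly, with uniformly bounded length and finite complexity, using only minimality together with the covering properties of the blender-horseshoes, in a way compatible with the one-dimensional center; and one must control the central derivative along the glued orbits finely enough that domination promotes average contraction to uniform contraction while keeping the exponent inside $(-\delta,0)$. Securing simultaneously the entropy lower bound (free concatenation must survive the subshift constraints) and the entropy upper bound (no surplus entropy introduced by the connectors) is where the bulk of the technical work lies, and is precisely the content of Theorem~\ref{teo:finally}.
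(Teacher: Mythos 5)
Your overall strategy --- Katok-type $(n,\eta)$-separated neutral blocks shadowing $\mu$, connectors of uniformly bounded length obtained from minimality of the strong foliations, and passes through a blender-horseshoe to create a Markov structure --- is the same as the paper's. The fatal problem is quantitative and sits exactly where you locate ``the bulk of the technical work'': your blender excursions are too short to hyperbolize the set. Along a neutral block of length $n$ the only control available (and the only control the Brin--Katok/Egorov argument can give for a nonhyperbolic measure) is $\lVert Df^{\ell}|_{E^\c_x}\rVert\le K_0e^{\ell\varepsilon_E}$ with $\varepsilon_E>0$ fixed \emph{before} $n$; so $L$ consecutive neutral blocks may contribute up to $Ln\varepsilon_E+L\log K_0$ to the logarithm of the central derivative, a quantity growing linearly in $n$. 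A connector of uniformly bounded length $R$ traversing the blender contributes at most a \emph{constant} amount of central contraction, of order $R\lvert\log\mathfrak m\rvert$. Hence for $n$ large the average central derivative over a period of length $Ln+R$ need not be smaller than $1$; your ``definite amount'' claim fails, the subshift you build can support invariant measures with nonnegative central exponent, and then no domination argument upgrades anything: $\Gamma^-$ need not be central contracting, nor even hyperbolic. One cannot repair this by letting $\eta,\varepsilon_E$ shrink with $n$, since $K_0$ and the threshold $n_0$ in the Katok-type statement depend on $\varepsilon_E$, and a nonhyperbolic measure gives no positive-measure set on which $\frac1n\log\lVert Df^n|_{E^\c}\rVert=O(1/n)$.

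The fix, and the actual content of Theorem~\ref{teo:finally}, is to make the blender excursion length grow linearly with $n$: the controlled expanding forward central covering property (Proposition~\ref{p.c.coveringproperty}) guarantees that a $\u$-strip of central width $w$ becomes, after about $\lvert\log w\rvert/\log\lambda_{\rm bh}$ iterates \emph{inside} the blender, a $\c$-complete strip, gaining central expansion at the definite rate $\lambda_{\rm bh}$ per iterate throughout the excursion. Choosing the strips of width roughly $e^{-n\sqrt{\eps_1}}$ forces an excursion of length $\ell\sim n\sqrt{\eps_1}/\log\lambda_{\rm bh}$, whose guaranteed gain $\ell\log\lambda_{\rm bh}\sim n\sqrt{\eps_1}$ dominates the possible loss $n\eps_1$ along the neutral block, while still being a vanishing fraction of $n$ as $\eps_1\to0$ --- which is what preserves entropy and weak$\ast$ closeness and keeps the exponent in $(0,\delta)$ (resp.\ $(-\delta,0)$). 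With such excursions after \emph{every} block your parameter $L$ is unnecessary. A secondary gap: ``routing the connector through the domain of the blender'' does not by itself give local maximality and the Markov property; one needs the image of each rectangle to cross every rectangle $\c$-completely, which is again the covering property, together with fake central foliations and distortion estimates to make sense of ``central width'' in the absence of a true center foliation.
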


The program for proving the above result was laid out in~\cite[Section 8.3]{DiaGelRam:17}. The result above is the corresponding version of \cite[Theorem 1]{DiaGelRam:17} (in a step skew-product setting with circle fiber maps) in the present setting. {The main difficulties of this translation are discussed below in Sections \ref{ss:axioo} and \ref{subsec:Idea}.}
During the final preparation of this manuscript, we noticed that a preprint with a similar result was announced in~\cite{YanZha:}.

We have the following straightforward consequence of the above.

\begin{mcorollary}[Restricted variational principles]\label{cortheocor:varprinc}
For every $f\in\cORTPH^1(M)$ 
\[
	h_{\rm top}(f)
	= \sup_{\mu\in \cM_{\rm erg,<0}(f)\cup\cM_{\rm erg,>0}(f)} h(\mu).
\]
\end{mcorollary}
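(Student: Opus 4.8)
The plan is to derive the statement directly from the classical variational principle combined with Theorem~\ref{t.approx}. First I would recall that, since $f$ is a homeomorphism of the compact manifold $M$, the variational principle yields $h_{\rm top}(f)=\sup_{\mu\in\cM(f)}h(\mu)$, and that, by the ergodic decomposition theorem together with the affinity of the metric entropy map $\mu\mapsto h(\mu)$, this supremum may be restricted to ergodic measures, so that $h_{\rm top}(f)=\sup_{\mu\in\cM_{\rm erg}(f)}h(\mu)$. Using the decomposition $\cM_{\rm erg}(f)=\cM_{\rm erg,<0}(f)\cup\cM_{\rm erg,0}(f)\cup\cM_{\rm erg,>0}(f)$ recalled above, it then suffices to check that the nonhyperbolic measures do not raise the supremum, that is, $h(\mu)\le\sup_{\nu\in\cM_{\rm erg,<0}(f)\cup\cM_{\rm erg,>0}(f)}h(\nu)$ for every $\mu\in\cM_{\rm erg,0}(f)$.

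This last inequality is precisely what Theorem~\ref{t.approx} provides. Indeed, fix a nonhyperbolic ergodic measure $\mu$ and let $\gamma>0$. Applying Theorem~\ref{t.approx} with this $\gamma$ (and any $\delta>0$) produces basic sets $\Gamma^-$, $\Gamma^+$, central contracting and central expanding respectively, together with ergodic measures $\nu^-\in\cM_{\rm erg}(f,\Gamma^-)\subset\cM_{\rm erg,<0}(f)$ and $\nu^+\in\cM_{\rm erg}(f,\Gamma^+)\subset\cM_{\rm erg,>0}(f)$ such that $h(\nu^\pm)\ge h(\mu)-\gamma$. Hence $\sup_{\nu\in\cM_{\rm erg,<0}(f)\cup\cM_{\rm erg,>0}(f)}h(\nu)\ge h(\mu)-\gamma$, and since $\gamma>0$ is arbitrary we obtain $h(\mu)\le\sup_{\nu\in\cM_{\rm erg,<0}(f)\cup\cM_{\rm erg,>0}(f)}h(\nu)$. (If $\cM_{\rm erg,0}(f)=\emptyset$, there is nothing to prove beyond the variational principle.)

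Putting the two observations together gives $\sup_{\mu\in\cM_{\rm erg}(f)}h(\mu)=\sup_{\nu\in\cM_{\rm erg,<0}(f)\cup\cM_{\rm erg,>0}(f)}h(\nu)$, the reverse inequality being trivial as the right-hand supremum runs over a subset of $\cM_{\rm erg}(f)$; combined with the variational principle this is the desired identity. Since all the dynamical substance --- the weak$\ast$ and entropy approximation of nonhyperbolic measures by hyperbolic basic sets --- is already packaged in Theorem~\ref{t.approx}, there is no genuine obstacle here; the only mild point requiring care is the reduction from arbitrary invariant to ergodic measures in the variational principle, which is classical.
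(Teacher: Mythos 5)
Your argument is correct and is exactly the intended one: the paper states this corollary as an immediate consequence of Theorem~\ref{t.approx}, and your proof simply fills in the routine details (classical variational principle, reduction to ergodic measures, and the entropy approximation of nonhyperbolic ergodic measures by the hyperbolic measures $\nu^{\pm}$ supplied by Theorem~\ref{t.approx}). No gaps.
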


Note that, in contrast to~\cite[Theorem 2]{DiaGelRam:} or~\cite{TahYan:}, in general there are yet no general tools to establish the uniqueness of hyperbolic measures of maximal entropy. See also the results and discussion in~\cite{RodRodTahUre:12}.

Recall that an ergodic measure is \emph{periodic} if it is supported on a periodic orbit. It is a classical result by Sigmund \cite{Sig:74} that periodic measures are dense in $\cM(f,\Gamma)$ for any basic set $\Gamma$, and hence every hyperbolic ergodic measure is approximated by hyperbolic periodic ones. The above result then immediately implies that this is also true for nonhyperbolic ergodic measures. 

\begin{mcorollary}[Periodic approximation]\label{cor:2}
For every $f\in\cORTPH^1(M)$ and every $\mu \in \cM_{\rm erg}(f)$ is approximated by hyperbolic periodic measures. Moreover, every $\mu \in \cM_{\rm erg,0}(f)$ is approximated by periodic measures in $\cM_{\rm erg,<0}(f)$ and in $\cM_{\rm erg,>0}(f)$, respectively. 
\end{mcorollary}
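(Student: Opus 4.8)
The plan is to deduce this directly from Theorem~\ref{t.approx} together with Sigmund's density theorem~\cite{Sig:74}, which asserts that periodic measures are weak$\ast$-dense in $\cM(f,\Gamma)$ for every basic set $\Gamma$. First I would dispose of the hyperbolic case. If $\mu\in\cM_{\rm erg,<0}(f)\cup\cM_{\rm erg,>0}(f)$, then by the Katok-type approximation recalled in Remark~\ref{rem:Katok} there is a sequence of basic sets $\Gamma_n$ and measures $\mu_n\in\cM_{\rm erg}(f,\Gamma_n)$ with $D(\mu_n,\mu)\to0$; applying Sigmund's theorem to each $\Gamma_n$ we find a periodic measure $\nu_n\in\cM(f,\Gamma_n)$ with $D(\nu_n,\mu_n)<1/n$, hence $\nu_n\to\mu$ in the weak$\ast$ topology, and $\nu_n$ is hyperbolic since $\Gamma_n$ is a hyperbolic set.

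Now let $\mu\in\cM_{\rm erg,0}(f)$ and fix $\delta>0$. By Theorem~\ref{t.approx} (with any choice of $\gamma>0$, which plays no role here) there are basic sets $\Gamma^-$ central contracting and $\Gamma^+$ central expanding such that \emph{every} measure in $\cM(f,\Gamma^\mp)$ is $\delta$-close to $\mu$ in the metric $D$. Applying Sigmund's theorem to the basic set $\Gamma^-$ yields a periodic measure $\nu^-\in\cM(f,\Gamma^-)$; since $\Gamma^-$ is a hyperbolic set on which $E^\ss\oplus E^\c$ is contracting, the central Lyapunov exponent of the periodic orbit carrying $\nu^-$ is negative, so $\nu^-\in\cM_{\rm erg,<0}(f)$, and $D(\nu^-,\mu)<\delta$. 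Symmetrically, $\Gamma^+$ provides a periodic measure $\nu^+\in\cM_{\rm erg,>0}(f)$ with $D(\nu^+,\mu)<\delta$. Letting $\delta=1/n\to0$ produces sequences of periodic measures $\nu^-_n\in\cM_{\rm erg,<0}(f)$ and $\nu^+_n\in\cM_{\rm erg,>0}(f)$ converging weak$\ast$ to $\mu$; in particular $\mu$ is approximated by hyperbolic periodic measures. This gives the corollary.

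As for difficulties: there are essentially none, since all the substantive work is already contained in Theorem~\ref{t.approx}. The only two points that deserve a word are that a periodic orbit contained in a central contracting (respectively central expanding) basic set automatically has negative (respectively positive) central exponent — immediate from the definitions of central contracting and central expanding hyperbolic sets — and that Sigmund's theorem furnishes periodic measures \emph{inside} any given basic set, not merely as weak$\ast$ limits of measures supported on basic sets, so that the sign of the central exponent is preserved along the approximating sequence.
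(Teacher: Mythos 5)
Your argument is correct and is essentially the proof the paper itself gives: the nonhyperbolic case is Theorem~\ref{t.approx} combined with Sigmund's density theorem applied inside the central contracting/expanding basic sets $\Gamma^{\mp}$ (whose periodic orbits automatically have central exponent of the right sign), and the hyperbolic case is the Katok-type approximation of Remark~\ref{rem:Katok} combined with Sigmund. Nothing is missing.
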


{Let us observe that a similar result was previously obtained in \cite{BonZha:16} assuming minimality of strong foliations and concluding correspondingly about the nature (more precisely the
{\em{index,}} that is, number of negative Lyapunov exponents) of the measures supported on the hyperbolic periodic orbits. }

The following result shows how entropy ``changes across measures with Lyapunov exponent close to zero''. As for Theorem~\ref{t.approx}, it will be an immediate consequence of a Theorem~\ref{teo:stated} correspondingly stated under the minimal hypotheses.

\begin{mtheorem}\label{theo:main3twin}
	For every $f\in\cORTPH^1(M)$  and every $\mu\in \cM_{\rm erg}(f)$ with $\alpha= \chi(\mu) <0$, there is a positive constant $K(f)\ge (\log\,\lVert Df\rVert)^{-1}$ such that for every $\delta>0$, $\gamma>0$, and  $\beta>0$, there is a basic set $\Gamma$ being central expanding such that
\begin{itemize}
\item[1.] its topological entropy satisfies
	\[
		h_{\rm top}(f,\Gamma)
		\ge \frac{h(\mu)}{1+K(f)(\beta+\lvert\alpha\rvert) } - \gamma,
	\]
\item[2.] every $\nu\in \cM_{\rm erg}(f,\Gamma)$ satisfies
	\[
		 \frac \beta {1 + K(f)(\beta+\lvert\alpha\rvert)}  - \delta
		 < \chi(\nu) <
		 \frac \beta {1+\frac{1}{\log \lVert Df\rVert}(\beta+\lvert\alpha\rvert)}+ \delta,
	\]		
and
	\[
		D(\nu,\mu)
		<\frac{K(f)(\beta+\lvert \alpha\rvert)}
			{1+K(f)(\beta+\lvert \alpha\rvert)}
		+\delta.
	\]	
\end{itemize}
The same conclusion is true for $\alpha > 0$ and every $\beta < 0$, changing in the assertion $\beta+|\alpha|$ to $|\beta|+\alpha$.

If $h(\mu)=0$ then $\Gamma$ is a hyperbolic periodic orbit.
\end{mtheorem}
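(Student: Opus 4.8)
The plan is to realise $\Gamma$ as a horseshoe obtained by concatenating very long orbit segments that shadow a hyperbolic model of $\mu$ with many loops inside a central expanding blender-horseshoe, the relative lengths being tuned so that the (one-dimensional, continuous) central direction, although contracted along the model segments, is \emph{on average} expanded at the prescribed small positive rate, while almost all the entropy is still produced by the model segments.

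First I would fix a hyperbolic model of $\mu$. Since $\chi(\mu)=\alpha<0$ the measure $\mu$ is hyperbolic, so by the dominated/$C^1$ version of Katok's theorem recalled in Remark~\ref{rem:Katok} one may fix, for an auxiliary $\eps>0$, a central contracting basic set $\Lambda$ with $h_{\rm top}(f,\Lambda)>h(\mu)-\eps$, with every $\nu\in\cM(f,\Lambda)$ being $\eps$-close to $\mu$ in the metric $D$, and with $\chi(\nu)\in(\alpha-\eps,\alpha+\eps)$ for every ergodic $\nu$ on $\Lambda$; when $h(\mu)=0$ one takes instead for $\Lambda$ a single hyperbolic periodic orbit with the last two properties, which is again possible since $\mu$ is hyperbolic ergodic. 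I would also fix a central expanding blender-horseshoe $D$ (provided by the definition of $\cORTPH^1(M)$). Since the restriction of $\chi$ to $\cM(f,D)$ is the integral of the continuous function $\log\lVert Df|_{E^\c}\rVert$, it is weak$\ast$-continuous and affine, hence attains its maximum $\lambda_D$ at an ergodic measure and, by Sigmund's theorem, is approximated by periodic ones; so I may fix a periodic point $q\in D$ with central exponent $\lambda_q\ge\lambda_D-\eps$, noting $0<\lambda_D\le\log\lVert Df\rVert$. The constant of the statement will be $K(f)\eqdef\lambda_D^{-1}$ (inflated slightly, if necessary, to absorb shadowing errors); it depends only on $f$ and satisfies $K(f)\ge(\log\lVert Df\rVert)^{-1}$.

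Next I would connect $\Lambda$ and $D$ and concatenate. Minimality of $\cF^\uu$ allows one to find inside $W^\u(\Lambda)$ a disk in the position required by the defining property of the blender $D$ (Section~\ref{sec:blennn}), which therefore meets $W^\s(D)$ transversally; minimality of $\cF^\ss$ and the dual property of the blender give a transverse intersection of $W^\u(D)$ with $W^\s(\Lambda)$. By the inclination lemma these heteroclinic connections yield transition orbit segments of some bounded length $t_0$ joining an $\eps$-neighborhood of an orbit of $\Lambda$ to a neighborhood of the orbit of $q$ and back; that $t_0$ may depend on $\Lambda$ is harmless, since it will be made negligible below. Now, given a large integer $n$ and an integer $k\ge1$ with $k\cdot\mathrm{per}(q)\cdot\lambda_q$ within $\eps n$ of $n(\beta+\lvert\alpha\rvert)$ (possible for $n$ large), set $m\eqdef k\cdot\mathrm{per}(q)$ and let $\Gamma$ be obtained by closing up all orbits whose itinerary is a free concatenation of blocks of the form ``($n$ steps following an arbitrary admissible $\Lambda$-itinerary)(a transition of length $t_0$)($k$ loops around the orbit of $q$)(the return transition of length $t_0$)''. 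This $\Gamma$ is compact, $f$-invariant and transitive; and since $E^\ss\oplus E^\c\oplus E^\uu$ is a dominated splitting while, by the computation below, all invariant measures of $\Gamma$ have central exponent close to $\beta/(1+K(f)(\beta+\lvert\alpha\rvert))>0$, the bundle $E^\c\oplus E^\uu$ is uniformly expanding over $T\Gamma$, so $\Gamma$ is a central expanding basic set (a single periodic orbit when $h(\mu)=0$).

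It then remains to verify the estimates. For $n$ large the only free symbols sit in the $\Lambda$-blocks, which occupy a fraction $n/(n+m+2t_0)=1/(1+K(f)(\beta+\lvert\alpha\rvert))+O(\eps+t_0/n)$ of each period, so a routine count of the admissible words of this almost-Markov system gives $h_{\rm top}(f,\Gamma)\ge h_{\rm top}(f,\Lambda)/(1+K(f)(\beta+\lvert\alpha\rvert))-O(\eps)$, which yields item~(1) after absorbing $\eps$ into $\gamma$. For any ergodic $\nu$ on $\Gamma$, averaging $\log\lVert Df|_{E^\c}\rVert$ over the rigid block structure gives
\[
	\chi(\nu)=\frac{n(\alpha+O(\eps))+m(\lambda_q+O(\eps))+O(t_0\log\lVert Df\rVert)}{n+m+2t_0}
	=\frac{\beta}{1+K(f)(\beta+\lvert\alpha\rvert)}+O(\eps+t_0/n),
\]
which lies in the asserted window (its upper endpoint $\beta/(1+(\log\lVert Df\rVert)^{-1}(\beta+\lvert\alpha\rvert))$ being the extreme case $\lambda_D=\log\lVert Df\rVert$); and, using that $D(\cdot,\mu)\le1$ and that a $\nu$-generic orbit spends a fraction at least $1/(1+K(f)(\beta+\lvert\alpha\rvert))-O(\eps+t_0/n)$ of its time within $\eps$ of $\Lambda$, hence near $\mu$, one bounds $D(\nu,\mu)$ by $K(f)(\beta+\lvert\alpha\rvert)/(1+K(f)(\beta+\lvert\alpha\rvert))+O(\eps+t_0/n)$. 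Choosing $\eps$ small and $n$ large finishes the three items, and the case $\alpha>0$, $\beta<0$ is symmetric, using the central contracting blender-horseshoe. I expect the main obstacle to be the connection/concatenation step: one has to exploit the precise blender-horseshoe geometry to guarantee that the prescribed itineraries are genuinely realised by orbits trapped in a fixed isolating neighborhood --- so that $\Gamma$ is honestly locally maximal and hyperbolic rather than merely an invariant set carrying measures with the right averages --- all the while keeping the transition lengths bounded and the center-unstable cone field consistent along the long, center-sign-changing blocks. This is exactly where minimality of \emph{both} strong foliations and the existence of a \emph{pair} of blenders are used, and it is the analogue in this setting of the construction of~\cite{DiaGelRam:17}.
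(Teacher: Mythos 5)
Your overall architecture is the right one and reproduces the paper's arithmetic: $\Gamma$ is assembled from long segments that track $\mu$ followed by forced excursions through the central-expanding blender, with the two lengths in ratio $1:K(f)(\beta+\lvert\alpha\rvert)$, and all three estimates are then read off from that ratio exactly as you compute. Your main structural departure --- replacing $\mu$ at the outset by a Katok horseshoe $\Lambda$ and then concatenating $\Lambda$ with the blender --- is not what the paper does: Section~\ref{sec:theo:main3twin} works directly with a skeleton of $(m,\varepsilon)$-separated $\mu$-generic orbit segments from Proposition~\ref{pro:BriKat}, exactly as in the proof of Theorem~\ref{teo:finally}. This substitution is not innocent. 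Since $\alpha<0$, your $\Lambda$ is central contracting (stable index $s+1$) while the blender is central expanding (stable index $s$), so $\dim W^\u(\Lambda)+\dim W^\s(D)=u+s<\dim M$: the heteroclinic connection you want is a \emph{heterodimensional} one, the inclination lemma does not produce the bounded-length transitions you invoke, and the only substitute for the missing transversality is the blender's covering property itself. The skeleton route avoids ever having to relate two horseshoes of different indices.

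The genuine gap is the step you yourself flag as "the main obstacle'' and then do not close, and your proposed mechanism for it is wrong in one essential respect. To recover the Markov crossing of every outgoing rectangle with every incoming one, the central width of the strip arriving at the blender must be re-expanded until the strip is $\c$-complete; by Proposition~\ref{p.c.coveringproperty} the number of iterates needed is $\sim\lvert\log w\rvert/\log\lambda_{\rm bh}$, where $w$ is the incoming width. This forces two things absent from your write-up. First, the initial central scale of the rectangles must be \emph{tuned} to $e^{-n\beta}$ (the choice $\delta_\c=e^{-m\beta}$ in the paper), so that after contraction by $e^{-n\lvert\alpha\rvert}$ along the $\Lambda$-block the incoming width is $e^{-n(\beta+\lvert\alpha\rvert)}$; a width independent of $\beta$ either overflows the fake central charts and destroys the distortion control of Proposition~\ref{p.distortiondgr}, or fails to become $\c$-complete in the allotted time. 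Second, once inside the blender the itinerary is \emph{not} free: by (BH9) and Remark~\ref{r.npossibilitiesstrip} the strip must follow the leg ($\mathbb A$ or $\mathbb B$) dictated by its position relative to $\cW^\st_{\loc}(x_P,f)$ and $\cW^\st_{\loc}(x_Q,f)$, so you cannot prescribe "$k$ loops around $q$'' and claim expansion at rate $\lambda_q\approx\lambda_D$; the only guaranteed rate is the worst-case cone expansion $\lambda_{\rm bh}$ of \eqref{e.nexpansionCu}. Consequently the constant must be $K(f)=1/\log\lambda_{\rm bh}$ (which still satisfies $K(f)\ge(\log\lVert Df\rVert)^{-1}$), not $\lambda_D^{-1}$, and the blender time is forced from below by the covering property rather than chosen. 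With the tuned scale $\delta_\c=e^{-m\beta}$, Corollary~\ref{corl.minmalwidth} and Lemma~\ref{lem:katrin} (in their Section~\ref{sec:theo:main3twin} form) give precisely the block-length ratio your averages require; without it, the set you "close up'' is not known to be locally maximal or even nonempty for arbitrary concatenations, and the uniform-expansion argument you run on its invariant measures has nothing to act on.
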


The result above corresponds to \cite[Theorem 5]{DiaGelRam:17}.

\begin{remark}[Continuations in the weak$\ast$ and in entropy of ergodic measures]
A consequence of Theorem~\ref{t.approx} is that for every $f\in \cORTPH^1(M)$ any ergodic measure $\mu$ of $f$ has a
\emph{continuation} in the following sense. Every diffeomorphism $g$ sufficiently $C^1$-close to $f$ has an ergodic measure
$\mu_g$  close to $\mu$ in the weak$\ast$ topology and with entropy close to the one of $\mu$. If the measure is hyperbolic this is essentially a reformulation of Remark~\ref{rem:Katok}. In the nonhyperbolic case, just note
that the measures supported on $\Gamma^\pm$ are close (in the weak$\ast$ and in entropy) to $\mu$. Hence
measures supported on the (well  and uniquely defined) continuations of $\Gamma^\pm$ for diffeomorphisms nearby $f$ are close to $\mu$. Note that these 
continuations of $\mu$ are hyperbolic. A much more interesting question, related to Theorem~\ref{theo:main3twin}, is if for $g$ close to $f$ the diffeomorphism $g$ has a nonhyperbolic measure close to $\mu$ (in the weak$\ast$ and in entropy).  This remains an open question. Note that by~\cite{BocBonDia:16}, $C^1$-open and -densely,  the diffeomorphisms close to $f$ have nonhyperbolic ergodic measures with positive entropy, but it is unclear and unknown if those can be chosen close to $\mu$.

Finally, observe that our constructive method provides a  way to obtain the hyperbolic sets $\Gamma^\pm$ (and hence their continuations) based on skeletons, see Section~\ref{skeleleton}.
Our notion of skeleton follows the one introduced in \cite{DiaGelRam:17} and depends on a blender-horseshoe, two connection times to such a blender-horseshoe, and finitely many (long) finite segments of orbits (where the finite central exponent is close to zero). In our context, all these ingredients are persistent. 
 Our concept of \emph{skeleton}  is  different (although with somewhat similar flavor) from the one introduced in parallel in \cite{DolViaYan:16}, that we call here \emph{DVY-skeleton}. The latter is a finite collection of hyperbolic periodic points with no heteroclinic intersections such that the strong unstable leaf of any point $x$ in the manifold intersects transversally the stable manifold of the orbit of some point in the skeleton. Open and densely  in $\cORTPH^1(M)$, DVY-skeletons
 consist of just one point (this follows from the minimality of the strong foliations and by the fact that the manifold is a homoclinic class, see Section~\ref{sscor:simplleexx}). Note that, in general, the DVY-skeletons may collapse by perturbations.
\end{remark}

The space $\cM(f)$ equipped with the weak$\ast$ topology is a Choquet simplex whose extreme points are the ergodic measures (see~\cite[Chapter 6.2]{Wal:82}). In some cases the set of ergodic measures $\cM_{\rm erg}(f)$ is dense in its closed convex hull $\cM(f)$ in which case (assuming that $\cM(f)$ is not just a singleton) one refers to it as \emph{the Poulsen simplex}%
\footnote{{Given a nonempty metrizable convex compact subset $K$ of a locally convex topological vector space, we say that $K$ is a \emph{Choquet simplex} if every point of $K$ is the barycenter of a unique probability measure supported on the set of extreme points of $K$. A \emph{Poulsen simplex} is a Choquet simplex 
where the extreme points are dense in $K$. See \cite{Sim:11}. 
}}, see also~\cite{LinOlsSte:78}. Although, in general, $\cM(f)$ is very far from having such a property, it is a consequence of~\cite{BocBonGel:} that each of the subsets $\cM_{\rm erg,<0}(f)$ and $\cM_{\rm erg,>0}(f)$ is indeed a Poulsen simplex. We  investigate further these simplices and study the remaining set of nonhyperbolic (ergodic) measures. Properties of this flavour were also studied in \cite{AbdBonCro:11}. {Let us observe that it is still is an open question whether hypotheses (H1)--(H3) imply that $\cM(f)$ itself is a Poulsen simplex.} 

\begin{mtheorem}[Arcwise connectedness]\label{teocor:simplleexx}
There is an $C^1$-open and -dense subset {of $\cORTPH^1(M)$ } consisting of diffeomorphisms $f$ for which the  intersection of the closed convex hull of $\cM_{\rm erg,<0}(f)$  and the closed convex hull of $\cM_{\rm erg,>0}(f)$  is nonempty and contains $\cM_{\rm erg,0}(f)$.
Each of the sets $\cM_{\rm erg,<0}(f)$ and $\cM_{\rm erg,>0}(f)$ is arcwise connected. Moreover, every measure in $\cM_{\rm erg,0}(f)$ is arcwise connected with any measure in $\cM_{\rm erg,<0}(f)$ and $\cM_{\rm erg,>0}(f)$, respectively. 
\end{mtheorem}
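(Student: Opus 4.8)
The plan is to combine the weak$\ast$ approximation of Theorem~\ref{t.approx} with the structure of the spaces of measures supported on basic sets and a robust homoclinic relation furnished by the minimality hypothesis (H3). First, Part~1 is immediate: given $\mu\in\cM_{\rm erg,0}(f)$, Theorem~\ref{t.approx} produces, for every $\delta>0$, a central contracting basic set $\Gamma^-$ and a central expanding basic set $\Gamma^+$ all of whose invariant measures are $\delta$-close to $\mu$; as $\cM_{\rm erg}(f,\Gamma^-)\subset\cM_{\rm erg,<0}(f)$ and $\cM_{\rm erg}(f,\Gamma^+)\subset\cM_{\rm erg,>0}(f)$, letting $\delta\to0$ gives $\mu\in\overline{\cM_{\rm erg,<0}(f)}\cap\overline{\cM_{\rm erg,>0}(f)}\subset\cconv(\cM_{\rm erg,<0}(f))\cap\cconv(\cM_{\rm erg,>0}(f))$; and whenever $\cM_{\rm erg,0}(f)\neq\emptyset$ this intersection is nonempty (such diffeomorphisms form a $C^1$-open and dense subset of $\cORTPH^1(M)$ by \cite{BocBonDia:16}, which is the set in the statement).

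For the arcwise connectedness I would isolate two dynamical facts. \emph{(i) Robust homoclinic relation.} Any two periodic points $q_1,q_2$ of $f$ with $\chi^\c<0$ are homoclinically related: since $E^\c$ is contracted along the orbit of such a $q$, the unstable manifold $W^\u(q)$ coincides with the strong unstable leaf $\cF^\uu(q)$, dense by (H3), while $W^\s(q)\supset\cF^\ss(q)$ is also dense; as $E^\uu$ and $E^\ss\oplus E^\c$ are everywhere transverse and $\cF^\uu$ is tangent to $E^\uu$, a density argument produces transverse intersections of $W^\u(q_1)$ with $W^\s(q_2)$ and of $W^\u(q_2)$ with $W^\s(q_1)$. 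Minimality being robust in $\cORTPH^1(M)$, so is this relation; symmetrically, all periodic points with $\chi^\c>0$ are pairwise homoclinically related. \emph{(ii) Long-block basic sets.} Given finitely many periodic orbits with $\chi^\c\leq-c<0$ that are pairwise homoclinically related and a large $N$, there is a basic set $\Gamma=\Gamma_N$ containing them in which every orbit spends at least $N$ consecutive iterates near one of these orbits between ``transition'' segments of bounded length. A dwell-time versus transition-length count bounds the central exponent of every invariant measure of $\Gamma$ by $-c(1-\eta)+\eta\log\lVert Df\rVert$ with $\eta=\eta(N)\to0$, so for $N$ large $\Gamma$ is central contracting and $\cM_{\rm erg}(f,\Gamma)\subset\cM_{\rm erg,<0}(f)$; moreover $\cM(f,\Gamma)$ lies within $o_N(1)$ of the closed convex hull of the corresponding periodic measures. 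As $\Gamma$ is a non-trivial basic set, $\cM(f,\Gamma)$ is a Poulsen simplex by Sigmund's theorem \cite{Sig:74}, hence $\cM_{\rm erg}(f,\Gamma)$ is arcwise connected \cite{LinOlsSte:78}.

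Now I would assemble the arcs. By (i)--(ii), any two periodic $<0$-measures lie in a common central contracting basic set, hence are joined by an arc inside $\cM_{\rm erg,<0}(f)$. Next, any $\mu\in\cM_{\rm erg,<0}(f)$ is joined by an arc in $\cM_{\rm erg,<0}(f)$ to a periodic $<0$-measure: by Remark~\ref{rem:Katok}, Sigmund's theorem, and continuity of $\nu\mapsto\chi^\c(\nu)$ there are periodic $<0$-measures $\delta_{q_n}\to\mu$; since $\chi^\c(\delta_{q_n})\to\chi^\c(\mu)<0$ the contraction rate stays uniform, so (ii) yields central contracting basic sets $\Gamma_n\ni q_n,q_{n+1}$ with $\cM(f,\Gamma_n)$ within $\eps_n\to0$ of $\mu$; joining $\delta_{q_n}$ to $\delta_{q_{n+1}}$ by an arc $a_n\subset\cM_{\rm erg}(f,\Gamma_n)$ of diameter $\leq2\eps_n$ and concatenating $a_1\ast a_2\ast\cdots$ yields a path that converges to $\mu$ and is completed by $\mu$. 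Thus $\cM_{\rm erg,<0}(f)$ is path-connected, hence arcwise connected (in the metrizable space $\cM(f)$ a path between two points contains an arc), and $\cM_{\rm erg,>0}(f)$ is treated symmetrically. Finally, for $\mu_0\in\cM_{\rm erg,0}(f)$, Theorem~\ref{t.approx} and Sigmund's theorem provide periodic $<0$-measures accumulating on $\mu_0$, so the same telescoping---now with contraction rates tending to $0$, compensated by letting $N_n\to\infty$ faster---produces an arc inside $\cM_{\rm erg,<0}(f)\cup\{\mu_0\}$ from $\mu_0$ to a periodic $<0$-measure; concatenating with the arcs above shows $\mu_0$ is arcwise connected to every measure of $\cM_{\rm erg,<0}(f)$, and the case of $\cM_{\rm erg,>0}(f)$ (using central expanding basic sets and $\chi^\c>0$ periodic orbits) is symmetric.

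The main obstacle is fact~(ii): coding the heteroclinic transitions between the chosen periodic orbits, controlling shadowing errors, and balancing the forced dwell-time $N$ against the (possibly small) contraction rate $c$ and the (bounded but possibly large) transition lengths, so that the resulting locally maximal set is \emph{simultaneously} central contracting and weak$\ast$-close to the target measure. Conceptually, the delicate point is that arcwise connectedness of $\cM_{\rm erg,<0}(f)$ cannot be deduced from the Poulsen property alone: $\cM_{\rm erg,<0}(f)$ is merely an \emph{open} subset of the extreme boundary of a Poulsen simplex, and open subsets of that infinite-dimensional set need not be connected; it is precisely the dynamical input that all periodic orbits with $\chi^\c<0$ are homoclinically related, and hence populate common central contracting basic sets, that forces the connectedness.
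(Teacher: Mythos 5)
Your proposal is correct and follows the same architecture as the paper's proof: the trivial inclusion of the weak$\ast$ closure into the closed convex hull combined with Theorem~\ref{t.approx} for the first claim; the homoclinic relation between all saddles of the same index forced by minimality of the strong foliations; periodic approximation via Corollary~\ref{cor:2}; arcs inside basic sets obtained from the Poulsen/Lindenstrauss--Olsen--Sternfeld property of $\cM(f,\Gamma)$; and a telescoping concatenation of such arcs. The one substantive difference is your fact~(ii): the paper, for each consecutive pair of periodic measures, takes \emph{some} basic set containing the two orbits and joins the measures by an arc there, deferring to \cite{GorPes:17} and \cite{DiaGelRam:17b} for why the concatenated arcs converge at the endpoints --- continuity at $0$ requires the \emph{entire} $n$-th arc, not merely its endpoints, to approach $\mu^0$, and this is exactly what your long-block/dwell-time construction delivers by squeezing all of $\cM(f,\Gamma_n)$ into a small neighborhood of the convex hull of the two periodic measures, while simultaneously keeping $\Gamma_n$ central contracting even when the contraction rates of the approximating orbits decay (the nonhyperbolic endpoint case). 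So you are, if anything, more explicit than the paper on the only delicate analytic point. Two minor differences of bookkeeping: the paper's open-and-dense subset is the one where $M$ is a homoclinic class of each index (via \cite{BocBonDia:18}), which it also uses to invoke \cite{BocBonGel:} and obtain the stronger identity $\cconv(\cM_{\rm erg,<0}(f))=\overline{\cM_{\rm erg,<0}(f)}$, whereas your trivial inclusion $\overline{\cN}\subset\cconv(\cN)$ already suffices for the stated containment; and your density argument in fact~(i) is precisely the one the paper dispatches with the word ``immediately''.
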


Indeed, the open and dense subset in the above corollary is the subset of $\cORTPH^1(M)$ for which the entire manifold is simultaneously the homoclinic class of a saddle of index $s$ and of index $s+1$, respectively. See the proof of Theorem~\ref{teocor:simplleexx}.

 The above theorem partially extends results in \cite{GorPes:17}  to our $C^1$ partially hyperbolic setting. The results in~\cite{GorPes:17} are stated for (i) measures supported on an \emph{isolated} homoclinic class whose saddles of the same index are all homoclinically related and assuming that (ii) $f$ is $C^{1+\varepsilon}$. Concerning (ii),  nowadays it is often used that the hypothesis $C^{1+\varepsilon}$ can be replaced by $C^1$ plus domination. Concerning (i), we will see that these conditions are satisfied in our setting. Indeed, see Section~\ref{sscor:simplleexx},  the set  $\cORTPH^1(M)$ can be chosen such that these two hypotheses hold for every of its elements. Theorem~\ref{teocor:simplleexx} is proved in  Section~\ref{sscor:simplleexx}. See also \cite[Section 3.1]{DiaGelRam:17b} for a proof of this type of results in a step skew product setting.

\subsection{The axiomatic approach in \cite{DiaGelRam:17,DiaGelRam:17b}}\label{ss:axioo}
As we have mentioned, this paper is a continuation of \cite{DiaGelRam:17,DiaGelRam:17b}, where the corresponding results where obtained for step skew-products with  circle fiber maps. The axiomatic setting proposed in \cite{DiaGelRam:17} considers three main hypotheses formulated for the underlying iterated function system (IFS) of the skew-product: transitivity, controlled expanding (contracting) forward covering relative to an interval (called \emph{blending interval}), and forward (backward) accessibility relative to an interval. In \cite[Section 8.3]{DiaGelRam:17} it is explained how these conditions are in fact motivated by the setting of  diffeomorphisms in $\cORTPH^1(M)$:
 the controlled expanding (contracting) forward covering property mimics the existence of  expanding (contracting) blenders, while  the forward (backward) accessibility mimics the minimality of the strong unstable (stable) foliation.
 As discussed in \cite{DiaGelRam:17b}, the axioms mentioned above  capture the essential dynamical properties of diffeomorphisms in $\cORTPH^1(M)$. In this paper, we complete the study initiated in  \cite{DiaGelRam:17,DiaGelRam:17b}.
 A key ingredient in the study of $\cORTPH^1(M)$ is the minimality of the strong invariant foliations. In \cite{BonDiaUre:02} blender-horseshoes are used  to prove this minimality, although at that time this concept was not yet introduced and the term blender-horseshoe does not appear in \cite{BonDiaUre:02}, and the authors refer to so-called \emph{complete sections} (see Section~\ref{sec:occurr} and Proposition~\ref{p.blenderopendense}). The next step, once these blender-horseshoes are obtained, is to study their dynamics and to state the precise correspondence of their expanding/contracting covering properties. This is done here in Section~\ref{sec:blennn} and Proposition~\ref{p.c.coveringproperty}.

\subsection{Idea of the proof}\label{subsec:Idea}
The proof is essentially based on the following ingredients. First we use blender-horseshoes with are just hyperbolic
basic sets with an additional geometrical superposition property. The second ingredient are the minimal strong foliations. Our construction will use so-called skeletons. A skeleton $\mathfrak{X}$  consists of arbitrarily long orbit pieces that mimic the ergodic theoretical properties of the given nonhyperbolic measure $\mu$. 
The cardinality of the skeletons $\card \mathfrak{X}$  is of order of $e^{m h(\mu)}$, where $m$ is the length of each individual orbit segment in the skeleton.
Using minimality, we see that these segments can be  connected in uniformly bounded time to the ``domain of the blender". Technical difficulties are the control of distortion related to the central direction as well as the absence of a central foliation. This last difficulty is circumvented by the use of  ``fake local invariant foliations"
introduced in \cite{BurWil:10}.

The hyperbolic set in Theorem~\ref{t.approx} is obtained as follows: using the segments of orbits provided by the skeleton property we construct $\card \mathfrak{X}$ pairwise disjoint full rectangles in the ``domain of the blender" such that for a fixed iterate
$N$ (which is of the order of $m$) the image of each rectangle intersects in a Markovian way each rectangle. 
This provides a hyperbolic basic set whose entropy is close to $h(\mu)$ and its exponents are close to $0$.

\subsection*{Organization of the paper}
In Section~\ref{sec:blennn}, we review all ingredients to construct blender-horseshoes and state and prove Proposition~\ref{p.c.coveringproperty} about the controlled expanding/contracting forward central covering property. In that section, we also prove  their $C^1$-open and -dense occurrence in $\cRTPH^1(M)$. In Section~\ref{s.approximation}, we state a general result on how to approximate the individual quantifiers of an ergodic measure by individual orbits. In Section~\ref{s.fake}, we recall fake invariant foliations 
to deal with the problem that in general there is no foliation tangent to the central bundle.
 Section~\ref{sec:5THEPROOF} is dedicated to the proof of Theorem~\ref{t.approx} and is the core of this paper. Theorem~\ref{theo:main3twin} is proven in Section~\ref{sec:theo:main3twin}, while Section~\ref{sscor:simplleexx} gives the proof of Theorem~\ref{teocor:simplleexx}.

\section{Blender-horseshoes}\label{sec:blennn}

In this section, we review the construction of blender-horseshoes in \cite{BonDia:12}
using the existing partially hyperbolic structure of the  diffeomorphisms. Here, besides the topological properties of blender-horseshoes, we will also need an additional quantitative \emph{controlled expanding forward central covering}, see Proposition~\ref{p.c.coveringproperty}.
In Section~\ref{sec:occurr}, we state the open and dense occurrence of blender-horseshoes in our setting, see Proposition~\ref{p.blenderopendense}.

\subsection{Definition of a blender-horseshoe}\label{ss.defblender}
We  will follow closely the presentation of blender-horseshoes in \cite{BonDia:12}  based on ingredients such as hyperbolicity, cone fields, and Markov partitions, and sketch its main steps. We also provide some further information which is not explicitly stated in \cite{BonDia:12}.

We say that a maximal invariant set $\Lambda$ of $f$ is an {\emph{unstable blender-horseshoe}} if there exists a region $\mathbf{C}$ diffeomorphic to $[-1,1]^{s+1+u}$ such that
$$
	\Lambda
	\eqdef \bigcap_{i\in \mathbb{Z}} f^i(\mathbf{C}) 
	\subset \mathrm{int}(\mathbf{C})
$$
and $\Lambda$ is a hyperbolic set with $s$-dimensional stable bundle and $(1+u)$-dimensional unstable bundle which satisfies conditions (BH1)--(BH6) in \cite[Section 3.2]{BonDia:12}. The set $\mathbf{C}$ is the \emph{domain of the blender-horseshoe}. A \emph{stable blender-horseshoe} is an unstable blender-horseshoe for $f^{-1}$.
Roughly speaking, it is a ``horseshoe with two legs'' having specific properties and being embedded in the ambient space in a especial way that it is has a ``geometric superposition property'': stated in the simplest way, there is an interval $(a,b)\subset[-1,1]$ such that for every $(x^\s,x)\in[-1,1]^s\times(a,b)$ any disk of the form $D=\{(x^s,x)\}\times[-1,1]^u$ intersects the local stable manifold of $\Lambda$. A key feature is that this property also holds for perturbations of  such disks.

To explain the simplest model, consider an affine horseshoe map $f$ such that in the central direction the map acts as a multiplication $x\mapsto\lambda x$ for some $\lambda\in(1,2)$; the maximal compact invariant set being contained in the rectangle $[-1,1]^s\times\{0\}\times[-1,1]^u$. Refer to Figure~\ref{Fig:blender} and the notation there. Note that this rectangle is not normally hyperbolic but $f$ is partially hyperbolic (consider the case of $\varepsilon=0$ in Figure~\ref{Fig:blender}). We now perturb $f$ in such a way, keeping affinity, that ``one of the legs is moved to the left'' in the central direction changing the dynamics in the central direction in  the rectangle $\mathbf C_{\mathbb B}$ to $x\mapsto \lambda x-\varepsilon$, $\varepsilon>0$ small. This provides an example of an affine unstable blender-horseshoe where the domain is $\mathbf C=[-1,1]^s\times[-\delta,\varepsilon(\lambda-1)^{-1}+\delta]\times[-1,1]^u$, $\delta>0$ small. A precise construction with all the details can be found in \cite{BonDiaVia:95} (though the term blender is not used there). Indeed, this example corresponds to the prototypical blender-horseshoes in~\cite[Section 5.1]{BonDia:12}.
Figure~\ref{Fig:blender} shows  a prototypical blender-horseshoe and illustrates at the same time all the elements in the (general) construction in this section.

\begin{figure}[h] 
\begin{overpic}[scale=.5, %grid, tics=10
]{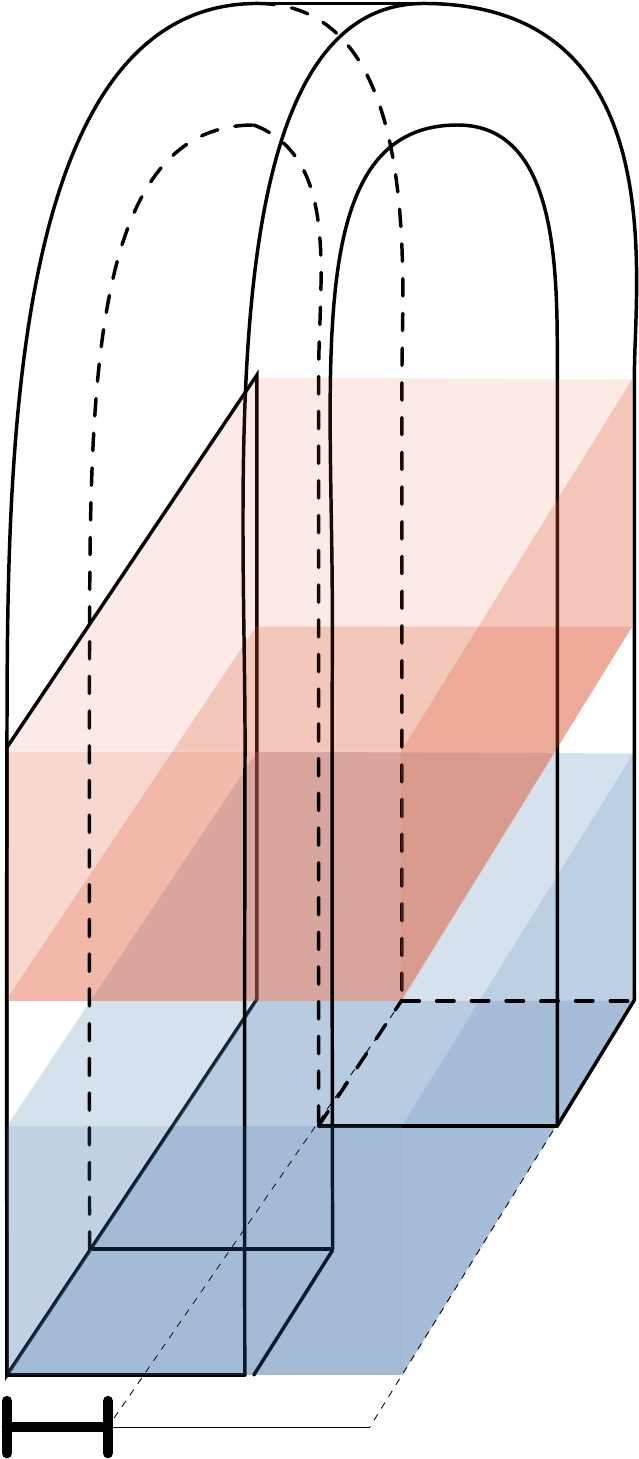}
 \put(3,-3){{\small{$\varepsilon$}}}
 \put(-8,38){{{$\mathbf C_{\mathbb B}$}}}
 \put(-8,12){{{$\mathbf C_{\mathbb A}$}}}
 \end{overpic}
 \hspace{2cm}
\begin{overpic}[scale=.3, %grid, tics=10
]{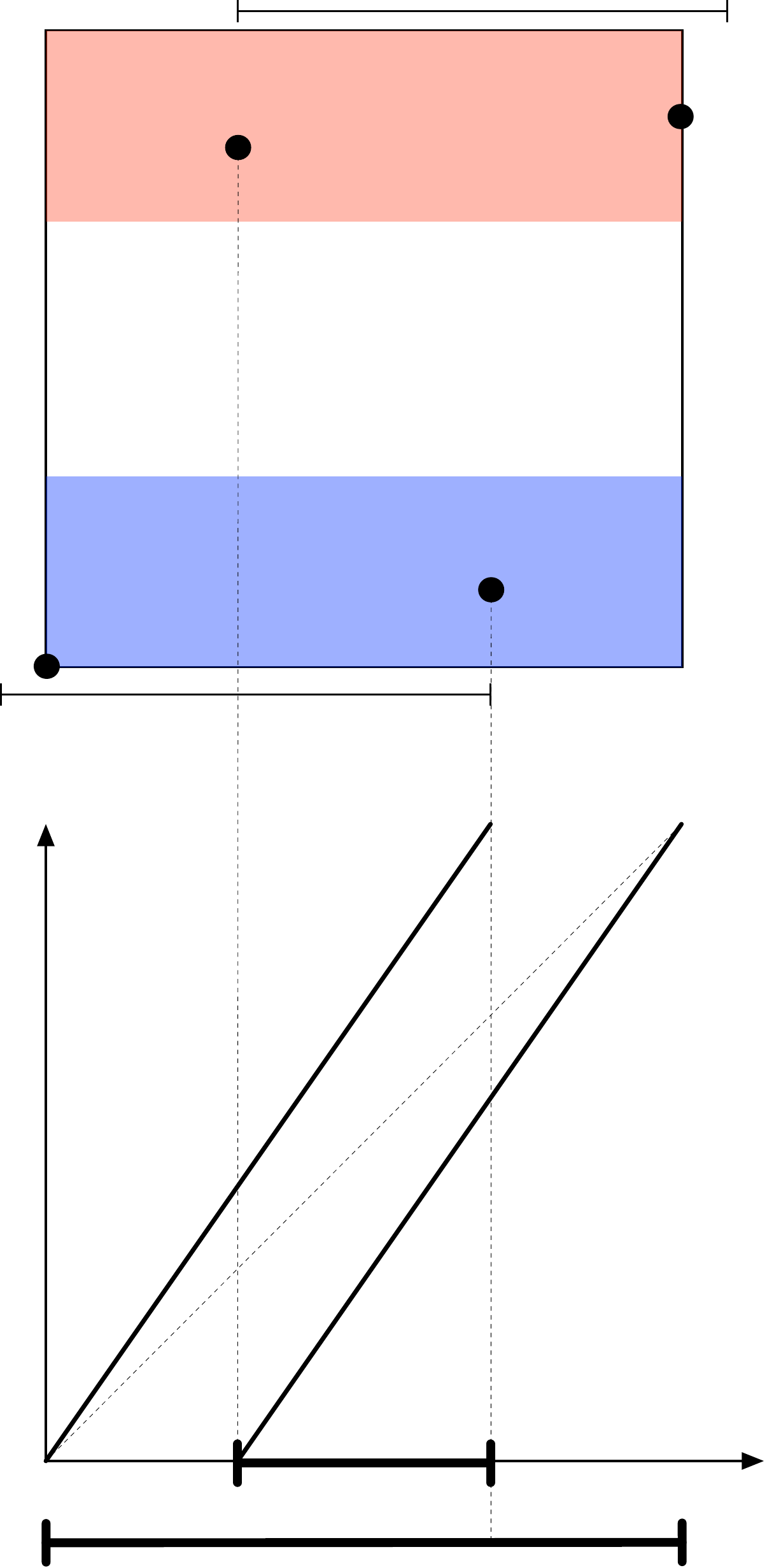}
\put(40,101){{\small{$\tau$}}}	
\put(7,52){{\small{$\tau$}}}	
 \put(-5,57){{\small{$P$}}}
 \put(44,93){{\small{$Q$}}}
 \put(6,91){{\small{$x_P$}}}
 \put(22,63){{\small{$x_Q$}}}
 \put(51,3){{\small{$x$}}}
 \put(15,3){{\small{$\varrho$}}}
 \put(0,-3){{\small{$\nu$}}}
 \end{overpic}
  \caption{Affine blender-horseshoe}
\label{Fig:blender}
\end{figure}

The main result in this section is Proposition \ref{p.c.coveringproperty} which derives a controlled  expanding forward central covering property, that is, the existence of some forward iteration along which any small enough unstable strip $S$ ``crossing the domain of the blender-horseshoe''  is uniformly expanded (in the central direction) and covers (in the central direction) the entire domain. This occurs  with uniform control on iteration length and expansion strength which depend on the central size of $S$ only. This property has its correspondence to the Axiom CEC+ in \cite{DiaGelRam:17} there stated for an IFS. 

Recall that we assume that $f$ is a partially hyperbolic diffeomorphism with a globally defined splitting $E^\ss\oplus E^\c \oplus E^\uu$, where  $s=\dim E^\ss\ge 1$, $u=\dim E^\uu\ge 1$, and $\dim E^\c=1$. 
Here the hyperbolic structure of the blender-horseshoe fits nicely with the partially hyperbolic one of $f$. In particular, $E^\s=E^\ss$ and $E^\u=E^\c\oplus E^\uu$ and the stable manifolds are the strong stable manifolds of $f$.

Conditions (BH1) and (BH3) in \cite{BonDia:12} state  the existence of a Markov partition and, in particular,  imply that the set $\Lambda$ is conjugate to a full shift of two symbols, denoted by $\mathbb{A}$ and $\mathbb{B}$.
The Markov partition provides two disjoint ``sub-rectangles" $\mathbf{C}_{\mathbb{A}}$  and  $\mathbf{C}_{\mathbb{B}}$  of $\mathbf{C}$ that codifies the dynamics, that is, $\Lambda= \bigcap_{i\in \mathbb{Z}} f^i(\mathbf{C}_{\mathbb{A}} \cup  \mathbf{C}_{\mathbb{B}} )$ and to each point $x\in \Lambda$ the conjugation associates the sequence $(\xi_i)_{\in \mathbb{Z}}\in 
\{\mathbb{A},\mathbb{B}\}^\mathbb{Z}$ defined by $f^i(x)\in \mathbf{C}_{\xi_i}$. This implies that $f$ has a fixed point $P\in \mathbf{C}_{\mathbb{A}}$ and a fixed point $Q\in \mathbf{C}_{\mathbb{B}}$.

Condition (BH2) refers to the existence of strong stable $\cC^\ss$, strong unstable $\cC^\uu$, and unstable $\cC^\u$ invariant cone fields (about  $E^\ss$, $E^\uu$, and $E^\u\eqdef E^\c\oplus E^\uu$, respectively). More precisely, given $\vartheta>0$ we denote 
\[\begin{split}
	\cC^\ss_\vartheta
	&\eqdef\{v=v^\ss+v^\c+v^\uu\colon v^i\in E^i,i\in\{\ss,\c,\uu\},
		\lVert v^\c+v^\uu\rVert\le\vartheta\lVert v^\ss\rVert\}.
\end{split}\]
 We simply refer to $\cC^\ss$ if $\vartheta$ is not specified. Analogously for $\cC^\u,\cC^\uu$. Here we also consider a cone field $\cC^\c$ contained in $\cC^\u$ about the central bundle with the analogous definition. 
Note that $\cC^\ss$ is backward invariant, $\cC^\u$ and $\cC^\uu$ are forward invariant, while $\cC^\c$ is not invariant. In our case, due to the partial hyperbolicity, the (global) existence of these cone fields is automatic and the  key point is the existence of $\lambda_{\rm bh} >1$ (and some appropriate norm $\lVert\cdot\rVert$ equivalent to the initial one, \cite{Gou:07}) such that
\begin{equation}\label{e.nexpansionCu}
	\lVert Df_x(v) \rVert
	\ge \lambda_{\rm bh} \lVert v\rVert, \quad \mbox{for every $x\in \mathbf{C}_{\mathbb{A}} \cup  \mathbf{C}_{\mathbb{B}}$ and $v\in \cC^\u$}.
\end{equation}
This means that the, otherwise neutral, central direction is indeed expanding in  $\mathbf{C}_{\mathbb{A}}$ and $\mathbf{C}_{\mathbb{B}}$.

The explanation of  the remaining conditions (BH4)--(BH6) demands some preliminary work. We consider the parts the boundary  of the ``rectangle" $\mathbf{C}$
corresponding to $(\partial [-1,1]^s) \times [-1,1] \times [-1,1]^u$  and
$[-1,1]^s \times [-1,1] \times ( \partial [-1,1]^u)$ and call them \emph{strong stable} and \emph{strong unstable boundaries}, denoted by $\partial^\ss \mathbf{C}$ and  $\partial^\uu \mathbf{C}$, respectively.%
\footnote{Note that in \cite{BonDia:12}, $\partial^\ss\mathbf C$ is called \emph{stable boundary} and denoted by $\partial^\s\mathbf C$. As here simultaneously we have stable and strong stable bundles, we prefer this notation.}

\smallskip\noindent\textbf{$\ss$-complete and $\uu$-complete disks.}
A {\emph{$\ss$-complete disk}} is a disk of dimension $s$ (that is a set diffeomorphic to $[-1,1]^s$) contained in $\mathbf{C}$ and tangent to the cone field $\cC^\ss$ whose boundary is contained in $\partial^\ss \mathbf{C}$. Similarly, a \emph{$\uu$-complete disk} is a disk of dimension $u$ contained in $\mathbf{C}$ and tangent to the cone field $\cC^\uu$ whose boundary is contained in $\partial^\uu \mathbf{C}$. It turns out that $\ss$- and $\uu$-complete disks containing a point $x\in \mathbf{C}$ are not unique. 
\smallskip

The \emph{local stable  manifold} $\cW^\st_\loc (x,f)$ of a point $x\in \Lambda$ is the connected component of $\cW^\st(x,f)\cap \mathbf{C}$ that contains $x$.%
\footnote{Note that here $\cW^\st_\loc(x,f)=\cW^\ss_\loc(x,f)$.}
 Similarly, for the local strong unstable manifold $\cW^\uu_\loc (x,f)$ of $x\in \Lambda$. Note that $\cW^\st_\loc (x,f)$ is a $\ss$-complete disk and  $\cW^\uu_\loc (x,f)$ is a $\uu$-complete disk for every $x\in \Lambda$.

 Condition (BH4) is a geometrical condition that claims that $\uu$-complete disks 
cannot intersect simultaneously $\cW^\st_\loc (P,f)$ and $\cW^\st_\loc (Q,f)$. 

\smallskip\noindent\textbf{$\uu$-complete disk in-between.}
Condition (BH4) also implies there are two homotopy classes of $\uu$-complete disks in $\mathbf{C}$ disjoint from $\cW^\st_{\loc}(P,f)$, called \emph{disks to the right} and \emph{disks to the left of $\cW^\st_{\loc}(P,f)$}. Similarly for $\cW^\st_{\loc}(Q,f)$. 
A $\uu$-complete disk that is to the right of $\cW^\st_{\loc}(P,f)$ and to the left of $\cW^\st_{\loc}(Q,f)$ is called \emph{in-between $\cW^\st_{\loc}(P,f)$ and $W^\st_{\loc}(Q,f),$} or shortly \emph{in-between}. We denote these disks by $\mathcal{D}^\uu_{\mathrm{bet}}$. Choosing appropriately right and left, we have
$\cW^\uu_{\loc}(x,f)\in \mathcal{D}^\uu_{\mathrm{bet}}$  for every $x\in \Lambda\setminus \{P,Q\}$.
\smallskip

 For each $\uu$-complete disk in  $D^\uu$ we consider the sets
 $$
 	D^\uu_{\mathbb{A}}
	\eqdef D^\uu  \cap \mathbf{C}_{\mathbb{A}}
 	\quad \mbox{and}\quad 
  	D^\uu_{\mathbb{B}}
	\eqdef D^\uu \cap \mathbf{C}_{\mathbb{B}}.
$$
Conditions (BH5)--(BH6) claims that for every $D^\uu \in \mathcal{D}^\uu_{\mathrm{bet}}$ then either $f ( D^\uu_{\mathbb{A}})\in \mathcal{D}^\uu_{\mathrm{bet}}$ or   $f ( D^\uu_{\mathbb{B}})\in \mathcal{D}^\uu_{\mathrm{bet}}$ (and there are cases such that both sets are in-between). This concludes the sketch of the description of a blender-horseshoe.

\begin{remark}[Orientation]\label{rem:orientation}
	Recall again that there is a (global) partially hyperbolic splitting of the tangent bundle of the manifold $TM=E^\ss\oplus E^\c\oplus E^\u$. In the definition of a blender-horseshoe, we will also require that for $f$ restricted to $\mathbf C_{\mathbb A}\cup\mathbf C_{\mathbb B}$, the tangent map $Df$ preserves orientation in the bundle $E^\c$. Note that this is also implicitly assumed in \cite{BonDia:12}.
\end{remark} 
 
\subsection{$\u$-strips in-between and expanding central covering}
 
 Similarly as in \cite[Section 1.a]{BonDia:96}, we introduce the notion of a \emph{$\u$-strip}. First, a curve in $\mathbf{C}$ is called \emph{central} if it is tangent to $\cC^\c$. A  \emph{$\u$-strip} is a closed disk $S$ of dimension $1+u$ tangent to the unstable cone field $\cC^\u$ that is simultaneously foliated by $\uu$-complete disks and by central curves (a \emph{central foliation} of $S$).
 Given a $\u$-strip $S$, a curve $\alpha\subset S$ is called \emph{$(S, \mathrm{c})$-complete} if it is a curve (whole leaf%
 \footnote{We define the strong unstable boundary of a strip in the same spirit of $\partial^\uu\mathbf C$, a complete leaf joins the two components of that boundary.}) of some central foliation of $S$. To a $\u$-strip $S$ we associate its (\emph{inner}) \emph{width} defined by
\[
 	w(S)  
	\eqdef \inf\{ |\alpha| \colon  \mbox{$\alpha$ is $(S, \mathrm{c})$-complete}\}.
\]
We say that a $\u$-strip is {\emph{in-between}} if it is foliated by $\uu$-complete disks in-between. To each $\u$-strip $S$ in-between we associate the sets
 $S_{{\mathbb{A}}}\eqdef S \cap  \mathbf{C}_{\mathbb{A}}$ and 
  $S_{{\mathbb{B}}} \eqdef S \cap  \mathbf{C}_{\mathbb{B}}$. 
We say that a $\u$-strip $S$ is \emph{$\c$-complete} if its intersects simultaneously $\cW^\st_{\loc}(P,f)$ and $\cW^\st_{\loc}(Q,f)$.

\begin{remark} \label{r.npossibilitiesstrip}
Conditions (BH5) and (BH6) and the expanding condition \eqref{e.nexpansionCu} imply that for a given $\u$-strip $S$ which is in-between there are two possibilities (see the arguments in \cite[Lemma 4.5]{BonDia:12}): 
\begin{enumerate}
\item	either $f(S_{\mathbb{A}})$ or $f(S_{\mathbb{B}})$ contains a $\u$-strip $S'$ in-between with $w(S')\ge \lambda_{\rm bh} \, w (S)$,
\item
or either $f(S_{\mathbb{A}}) \cap  \cW^\st_{\loc}(P,f)\ne\emptyset$ or
$f(S_{\mathbb{B}}) \cap  \cW^\ut_{\loc}(P,f)\ne\emptyset$.
\end{enumerate}
Moreover, if $S$ is $\c$-complete then $f(S_{\mathbb{A}})$ and $f(S_{\mathbb{B}})$ are both $\c$-complete. 
\end{remark}

For our goals we need a more precise ``quantitative"  version of the ``expanding" returns in the remark, that we call \emph{controlled expanding forward central covering} stated below.

\begin{proposition}[Controlled expanding forward central covering]\label{p.c.coveringproperty}
Let $\lambda_{\rm bh}>1$ be as in \eqref{e.nexpansionCu}. There is $C>0$ such that for every $\u$-strip $S$ in-between there is a positive integer $\ell (S)$, 
$$
	\ell (S)
	\eqdef \left\lceil\frac{\lvert\log w (S)\rvert}{\log \lambda_{\rm bh}} + C\right\rceil+1,
$$
such that for every $\ell \ge \ell(S)$ there
is a subset $ S'\subset S$ such that
\begin{itemize}
\item[(a)] $f^k( S')$ is  contained in $\mathbf{C}$ for all $k\in\{0,\ldots, \ell\}$ and 
\item[(b)] $f^\ell( S')$ is a $\c$-complete $\u$-strip.
\end{itemize}
\end{proposition}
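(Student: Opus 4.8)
The plan is to iterate the dichotomy of Remark~\ref{r.npossibilitiesstrip}. As long as we are in its alternative~(1) the strip widens by the uniform factor $\lambda_{\rm bh}$; since a $\u$-strip in-between which is not $\c$-complete cannot be arbitrarily wide, after controllably many steps we either already have a $\c$-complete $\u$-strip or we fall into alternative~(2), and in the latter case a \emph{uniformly bounded} number of further iterations — governed by the hyperbolicity at the fixed point $P$ — produces a $\c$-complete $\u$-strip. All iterates are kept inside $\mathbf C$, and along the chosen symbolic itinerary we keep track of a preimage $\u$-strip inside the original $S$; this will give the required subset $S'\subset S$.

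\emph{Step 1 (widening).} Put $S_0\eqdef S$ and $R_0\eqdef S$. Given $S_i$ in-between for which alternative~(1) holds, choose $\xi_i\in\{\mathbb A,\mathbb B\}$ and a $\u$-strip $S_{i+1}\subset f\big((S_i)_{\xi_i}\big)$ in-between with $w(S_{i+1})\ge\lambda_{\rm bh}\,w(S_i)$. Since $f$ is injective on $\mathbf C_{\xi_i}$ and maps pieces of $\uu$-complete disks onto $\uu$-complete disks, pulling $S_{i+1}$ back along $\xi_0,\dots,\xi_i$ gives a $\u$-strip $R_{i+1}\subset R_i\subset S$ with $f^{i+1}(R_{i+1})=S_{i+1}$ and $f^k(R_{i+1})\subset S_k\subset\mathbf C$ for $0\le k\le i+1$. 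Now any $(S_i,\c)$-complete curve of a non-$\c$-complete in-between strip has central extent confined strictly between the two $\ss$-complete disks $\cW^\st_\loc(P,f)$ and $\cW^\st_\loc(Q,f)$; as these curves are tangent to the narrow cone $\cC^\c$ and $\mathbf C$ is compact, this bounds $w(S_i)$ by a constant $W=W(\mathbf C)>0$. Together with $w(S_i)\ge\lambda_{\rm bh}^{\,i}\,w(S)$ (valid as long as alternative~(1) holds for $S_0,\dots,S_{i-1}$ and none is $\c$-complete) this forces: at some index $i_0\le\big(|\log w(S)|+\log W\big)/\log\lambda_{\rm bh}+1$, either $S_{i_0}$ is $\c$-complete, or alternative~(1) fails for $S_{i_0}$ and hence alternative~(2) holds.

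\emph{Step 2 (escaping alternative~(2) at uniform cost).} Assume alternative~(2) holds for $S_{i_0}$; say $f\big((S_{i_0})_{\mathbb A}\big)$ meets $\cW^\st_\loc(P,f)$. Being the image of a $\u$-strip it is tangent to $\cC^\u$, hence meets $\cW^\st_\loc(P,f)$ transversally. The fixed point $P$ is hyperbolic with uniform rates, and by the blender structure (conditions (BH5)--(BH6) and the expansion \eqref{e.nexpansionCu}; here Remark~\ref{rem:orientation} guarantees that the central orientation is preserved) its local unstable manifold $\cW^\u_\loc(P,f)$ inside $\mathbf C$ is a $\c$-complete $\u$-strip, and $\c$-completeness is a $C^1$-robust property of $\u$-strips. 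Hence the inclination lemma at $P$ — applied to the uniformly nondegenerate family of $\u$-strips crossing $\cW^\st_\loc(P,f)$ — yields a constant $K_0=K_0(\Lambda)$, independent of $S$, such that after at most $K_0$ more iterates (kept in $\mathbf C$ and, as in Step~1, provided with a preimage $\u$-strip inside $S_{i_0}$, hence inside $S$) the image contains a $\c$-complete $\u$-strip. This quantitative statement is the counterpart of Axiom~CEC$+$ of \cite{DiaGelRam:17} and is the heart of the argument; see also \cite{BonDia:12} for the blender-horseshoe geometry involved.

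\emph{Step 3 (conclusion).} Set $C\eqdef\log W/\log\lambda_{\rm bh}+K_0$; then $\ell(S)\ge|\log w(S)|/\log\lambda_{\rm bh}+C+1\ge i_0+K_0=:\ell_0$. By Steps~1--2 there exist $\ell_0\le\ell(S)$, a $\u$-strip $R\subset S$ with $f^k(R)\subset\mathbf C$ for $0\le k\le\ell_0$, and $f^{\ell_0}(R)$ a $\c$-complete $\u$-strip. Fix any $\ell\ge\ell(S)\ge\ell_0$. By the last assertion of Remark~\ref{r.npossibilitiesstrip}, the $\mathbb A$- and $\mathbb B$-images of a $\c$-complete $\u$-strip are again $\c$-complete (and lie in $\mathbf C$), so we may prolong the itinerary from time $\ell_0$ to time $\ell$, obtaining $\c$-complete $\u$-strips $S_{\ell_0},\dots,S_\ell$ with $S_{j+1}\subset f\big((S_j)_{\xi_j}\big)$. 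Pulling $S_\ell$ back along the finite itinerary $\xi_0,\dots,\xi_{\ell-1}$ (injectivity of $f$ on each $\mathbf C_{\xi_j}$) yields a $\u$-strip $S'\subset S$ with $f^k(S')\subset\mathbf C$ for all $0\le k\le\ell$ and $f^\ell(S')=S_\ell$ a $\c$-complete $\u$-strip, which is the assertion. The one genuinely delicate point is Step~2: converting the topological dichotomy of Remark~\ref{r.npossibilitiesstrip} into a uniform bound $K_0$ on the time needed to reach a $\c$-complete strip once the widening alternative breaks down; everything else is bookkeeping of widths and of Markovian preimages.
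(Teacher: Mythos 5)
The overall architecture of your argument (widen until a threshold, then finish near the fixed point $P$, then propagate $\c$-completeness forward and pull back along the itinerary) is the same as the paper's, and Steps~1 and~3 are essentially correct bookkeeping. The problem is Step~2, which you yourself flag as the delicate point but do not actually close: the claimed uniform constant $K_0$ does not follow from the inclination lemma. Alternative~(2) of Remark~\ref{r.npossibilitiesstrip} can already hold at $i_0=0$, for a strip of arbitrarily small width whose image barely grazes $\cW^\st_{\loc}(P,f)$ near the edge of the strip. The family of $\u$-strips meeting $\cW^\st_{\loc}(P,f)$ is therefore \emph{not} uniformly nondegenerate: the portion of such a strip lying to the right of $\cW^\st_{\loc}(P,f)$ can have arbitrarily small width $w$, and the number of iterates needed before its forward images reach $\cW^\st_{\loc}(Q,f)$ (i.e.\ become $\c$-complete) grows like $\lvert\log w\rvert/\log\lambda_{\rm bh}$ — exactly the quantity you were trying to bound by a constant. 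The inclination lemma gives $C^1$-accumulation of iterates on $\cW^\u(P)$, but the time to achieve a spread of definite size depends on the size of the transverse disk, so it cannot supply a bound independent of $S$.

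The paper avoids this by running the widening phase against a \emph{stronger} trichotomy than Remark~\ref{r.npossibilitiesstrip}: it uses the auxiliary homoclinic points $x_P,x_Q$ of (BH7)--(BH9). By (BH9), as long as an in-between strip does not meet \emph{both} $\cW^\st_{\loc}(x_P,f)$ and $\cW^\st_{\loc}(x_Q,f)$, one of the two branches keeps expanding its width by $\lambda_{\rm bh}$; and by (BH8), the moment it does meet both, its width is at least the \emph{definite} constant $\varrho$. Lemma~\ref{l.both} then converts this into a strip quasi to the right of $\cW^\st_{\loc}(P,f)$ of width at least $\lambda_{\rm bh}\varrho$, and only because this starting width is a universal constant does Lemma~\ref{l.fullys} deliver a uniformly bounded number $L\le C_2$ of further iterates to reach $\c$-completeness. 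To repair your proof you would need to replace the dichotomy of Remark~\ref{r.npossibilitiesstrip} in Step~1 by the trichotomy of Remark~\ref{r.threecases}/(BH9), so that the widening phase is only allowed to terminate once a definite lower bound on the width is guaranteed; as written, Step~2 asserts a uniform bound that is false for the configurations your Step~1 can produce.
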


The proof of the above proposition will be completed in Section~\ref{proof:Prooo}.

\subsection{Further properties of blender-horseshoes}\label{sss.additional}
To get  Proposition~\ref{p.c.coveringproperty}, we state additional properties (BH7), (BH8), and (BH9). Note that they are not additional hypotheses on the blender-horseshoe but rather  straightforward consequences of (BH1)--(BH6) and the constructions in \cite{BonDia:12} and obtained taking a sufficiently thin strong unstable cone field. 
\begin{enumerate}
\item[(BH7)] 
The intersection $f^{-1} (\cW^\st_{\loc} (P,f))\cap \mathbf{C}$  consists of two connected components:
$\cW^\st_{\loc} (P,f)$ and a second component $\cW^\st_{\loc} (x_P,f)$, where $x_P$ is a homoclinic point%
\footnote{A point is a \emph{homoclinic point of $P$} if it belongs simultaneously to the stable and to the unstable manifold of $P$. Note that, in our setting, a homoclinic point is automatically \emph{transverse}, that is, those manifolds intersect transversally.}
 of $P$ in $\Lambda$. Similarly, $f^{-1} (\cW^\st_{\loc} (Q,f))\cap \mathbf{C}$ consists of  two connected components, $\cW^\st_{\loc} (Q,f)$ and $\cW^\st_{\loc} (x_Q,f)$, where $x_Q$ is a homoclinic point of $P$ in $\Lambda$. 
\end{enumerate}

As above, we can speak of $\uu$-complete disks to the left/right of  $\cW^\st_{\loc} (x_P,f)$ and of $\cW^\st_{\loc} (x_Q,f)$.
Similarly as in condition (BH4) the blender-horseshoe we have the following:
\begin{enumerate}
\item[(BH8)]
Every $\uu$-complete disk which intersects $\cW^\st_{\loc} (x_P,f)$ is to the left of 
$\cW^\st_{\loc} (x_Q,f)$ and every $\uu$-complete disk intersecting $\cW^\st_{\loc} (x_Q,f)$ is to the right of $\cW^\st_{\loc} (x_P,f)$. In particular,  
any $\uu$-complete disk intersecting $\cW^\st_{\loc} (x_P,f)$ and any 
$\uu$-complete disk intersecting $\cW^\st_{\loc} (x_Q)$ are disjoint. Moreover, there is $\varrho>0$ so that every $\u$-strip intersecting  $\cW^\st_{\loc} (x_P,f)$ and  $\cW^\st_{\loc} (x_Q,f)$ has minimal width bigger than $\varrho$.
\end{enumerate}

The points $x_P$ and $x_Q$ are auxiliary in order to quantify the size of the geometric superposition region (compare Figure~\ref{Fig:blender}). Note that, in order to prove Proposition \ref{p.c.coveringproperty} it is enough to show that given any $\u$-strip $S$ in-between there is  a number $n$ of iterates (depending on $\lvert\log w(S)\rvert$ only) so that we obtain a $\u$-strip which intersects the local stable manifold of $P$ and whose part to the right of $P$  has some least size (indeed, $\lambda_{\rm bh}\varrho$). This in turn is guaranteed when  $f^{n-1}(S)$ intersects simultaneously $\cW^\st_{\loc} (x_P,f)$ and  $\cW^\st_{\loc} (x_Q,f)$. This is a sketch of the content of Lemmas~\ref{l.intersectsimult},~\ref{l.both}, and~\ref{l.fullys}.

\begin{remark}\label{r.threecases}
Condition (BH8) implies that there are three possibilities for a $\u$-strip $S$  in-between:  
\begin{enumerate}
\item it is to  the left of $\cW^\st_{\loc}(x_Q,f)$,
\item  it is to the right of $\cW^\st_{\loc}(x_P,f)$,
\item it intersects simultaneously  $\cW^\st_{\loc}(x_P,f)$ and $\cW^\st_{\loc}(x_Q,f)$, hence by (BH8) it has minimal width at least $\varrho$.
\end{enumerate}
\end{remark}

Next condition is an improved version of Remark~\ref{r.npossibilitiesstrip} (and it is shown as in \cite[Lemma 4.5]{BonDia:12}).
\begin{enumerate}
\item[(BH9)] Consider a strip $S$ in-between. Then 
 \begin{enumerate}
 \item If $S$ is to the left of $\cW^\st_{\loc}(x_Q,f)$ then $f (S_{\mathbb{A}})$ contains a  $\u$-strip $S'$ in-between with $w(S')\ge \lambda_{\rm bh} w(S)$,
\item  If $S$ is to the right of $\cW^\st_{\loc}(x_P,f)$ then $f (S_{\mathbb{B}})$ contains a $\u$-strip $S'$ in-between  with $w(S')\ge \lambda_{\rm bh} w (S)$,
\end{enumerate}
\end{enumerate}

\begin{remark}\label{r.maximalwidth}
There is a number $\tau>0$ with the following property:
\begin{itemize}
\item Every $\u$-strip in-between to the  right of $\cW^\st_{\loc}(x_P,f)$ has (inner) width less than $\tau$. 
\item Every $\u$-strip in-between to the left of $\cW^\st_{\loc}(x_Q,f)$ has  (inner) width less than $\tau$. 
\end{itemize}
In other words, any $\u$-strip in-between with (inner) width bigger than $\tau$ intersects simultaneously $\cW^\st_{\loc}(x_P,f)$  and $\cW^\st_{\loc}(x_Q,f)$. Compare Figure~\ref{Fig:blender}.
\end{remark}

\subsection{Iterations of $\u$-strips} \label{sss.iterations}

The next step is the iteration of $\u$-strips to obtain covering properties. The key in this process is that here we have more accurate control of the image of the strips as in the (standard) blenders (compare with \cite[Lemma 1.7]{BonDia:96}).

\begin{lemma}[Simultaneous intersections]
\label{l.intersectsimult}
Consider a $\u$-strip $S$ in-between. Let $w=w (S)$ and define $N=N(w)$ as the first integer with $\lambda_{\rm bh}^{N} w>\tau$, where $\lambda_{\rm bh}$ is the expansion constant in \eqref{e.nexpansionCu} and $\tau$ is as in Remark~\ref{r.maximalwidth}.
Then there is a first $n\in\{0,\ldots, N\}$ such that 
\begin{itemize}
\item $f^n(S)$ contains a $\u$-strip $S'$ in-between that intersects simultaneously 
$\cW^\st_{\loc} (x_P,f)$ and $\cW^\st_{\loc} (x_Q,f)$,
\item we have $f^i(f^{-n}(S'))\subset \mathbf{C}$ for all $i=0,\dots,n$.
\end{itemize}
\end{lemma}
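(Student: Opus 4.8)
The plan is to iterate $S$ step by step, tracking at each stage which of the three cases in Remark~\ref{r.threecases} applies, and showing that the process reaches Case~(3) within $N$ steps. First I would set $S^{(0)} = S$ and, as long as $S^{(j)}$ is in-between but does \emph{not} intersect both $\cW^\st_{\loc}(x_P,f)$ and $\cW^\st_{\loc}(x_Q,f)$, it is (by Remark~\ref{r.threecases}) either to the left of $\cW^\st_{\loc}(x_Q,f)$ or to the right of $\cW^\st_{\loc}(x_P,f)$. In the first sub-case, condition (BH9)(a) gives a $\u$-strip $S^{(j+1)}\subset f(S^{(j)}_{\mathbb A})$ in-between with $w(S^{(j+1)})\ge\lambda_{\rm bh}\,w(S^{(j)})$; in the second sub-case, (BH9)(b) does the same with $f(S^{(j)}_{\mathbb B})$. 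Either way the width is multiplied by at least $\lambda_{\rm bh}>1$ at each step, so after at most $N = N(w)$ iterations (the first integer with $\lambda_{\rm bh}^N w>\tau$) the width would exceed $\tau$, which by Remark~\ref{r.maximalwidth} forces the strip to intersect $\cW^\st_{\loc}(x_P,f)$ and $\cW^\st_{\loc}(x_Q,f)$ simultaneously. Hence the process must terminate at some first $n\le N$ with $f^n(S)\supset S'$ in-between meeting both local stable manifolds; this proves the first bullet.

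For the second bullet I would unwind the recursive construction: at each step $S^{(j+1)}$ was chosen \emph{inside} $f(S^{(j)}_{\mathbb A})$ or $f(S^{(j)}_{\mathbb B})$, i.e.\ inside $f(S^{(j)}\cap\mathbf C)$. Pulling back, there is a subset $T_j\subset S^{(j)}$ with $f(T_j)=S^{(j+1)}$ and $T_j\subset\mathbf C_{\mathbb A}\cup\mathbf C_{\mathbb B}\subset\mathbf C$. Composing, $f^{-n}(S')$ (the appropriate preimage sitting inside $S$) satisfies $f^i(f^{-n}(S'))\subset\mathbf C$ for $i=0,\dots,n$, because each intermediate image equals some $S^{(i)}$ (or a subset thereof) which by construction lies in $\mathbf C$, and the very first set $f^{-n}(S')$ lies in $S\subset\mathbf C$. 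This is essentially bookkeeping once the recursion is set up cleanly; I would phrase it by saying that the strips $S^{(0)},\dots,S^{(n)}$ together with the inclusions $S^{(j+1)}\subset f(S^{(j)})$ furnish the required backward orbit inside $\mathbf C$.

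One small point that needs care: at each step one must verify that $S^{(j)}$ is \emph{still} in-between (so that Remark~\ref{r.threecases} and (BH9) apply), and that, since $S$ was never $\c$-complete before reaching Case~(3), cases (1)/(2) of Remark~\ref{r.threecases} genuinely exhaust the alternatives; (BH9) guarantees that the image strip produced is again in-between, so this is preserved inductively. I also need the trivial observation that the very first time the strip meets both $\cW^\st_{\loc}(x_P,f)$ and $\cW^\st_{\loc}(x_Q,f)$ it is still in-between (it was chosen as an in-between strip at the previous step), which is exactly what the statement asserts.

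The main obstacle — really the only genuinely nontrivial input — is establishing the dichotomy (BH9) itself, i.e.\ that a strip lying strictly to one side of $\cW^\st_{\loc}(x_P,f)$ (resp.\ $\cW^\st_{\loc}(x_Q,f)$) has an image under the appropriate branch that is again in-between with width expanded by $\lambda_{\rm bh}$; but this is stated as a standing property of the blender-horseshoe (shown as in \cite[Lemma 4.5]{BonDia:12}) and so may be assumed here. Given (BH8), (BH9), Remark~\ref{r.threecases}, and Remark~\ref{r.maximalwidth}, the lemma itself is then just the iteration argument above: count the steps using the uniform width expansion, stop when the width passes the threshold $\tau$, and read off the backward orbit in $\mathbf C$ from the nested construction.
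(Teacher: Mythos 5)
Your proposal is correct and follows essentially the same route as the paper's proof: the same induction on iterates using the trichotomy of Remark~\ref{r.threecases}, the width-expansion dichotomy (BH9), and the threshold $\tau$ from Remark~\ref{r.maximalwidth} to bound the stopping time by $N$, with the containment in $\mathbf C$ read off from the nested preimages. No gaps.
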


Considering the strip $S'$ in Lemma~\ref{l.intersectsimult} and recalling that $x_P$ is a homoclinic point of $P$ and $x_Q$ is a homoclinic point of $Q$, we have that $f(S'_{\mathbb{A}})$ intersects $\cW^\st_{\loc} (Q,f)$  and that $f(S'_{\mathbb{B}})$  intersects $\cW^\st_{\loc} (P,f)$.

\begin{proof}[Proof of Lemma~\ref{l.intersectsimult}]
The proof is by induction, using arguments as in \cite[Lemma 4.5]{BonDia:12}. Let $S^0=S$. If $S^0$ intersects simultaneously $\cW^\st_{\loc} (x_P,f)$ and $\cW^\st_{\loc} (x_Q,f)$ we are done. Otherwise, by Remark~\ref{r.threecases}, either
$S^0$ is to  the left of $\cW^\st_{\loc}(x_Q,f)$ or 
$S^0$ is to  the right of $\cW^\st_{\loc}(x_P,f)$.
If the first case consider $S^0_{\mathbb{A}}$ and observe that by (BH9) we have that $f (S^0_{\mathbb{A}})$ contains a $\u$-strip $S^1$ in-between with
$w(S^1)\ge \lambda_{\rm bh} w (S^0)$. In the second case,
consider $S^0_{\mathbb{B}}$ and observe that by (BH9) we have
that $f (S^0_{\mathbb{B}})$ contains a $\u$-strip $S^1$ in-between with
 $w(S^1)\ge \lambda_{\rm bh} w (S)$. Note that $f^{-1} (S^1) \subset S^0 \subset \mathbf{C}$.

We now proceed inductively, assume that we have defined $\u$-strips in-between $S=S^0,\dots, S^{n}$ that do not intersect simultaneously  $\cW^\st_{\loc}(x_P,f)$ and $\cW^\st_{\loc}(x_Q,f)$, satisfy either $S^{i}\subset f(S^{i-1}_{\mathbb{A}})$ or  $S^{i}\subset f(S^{i-1}_{\mathbb{B}})$, according to the case, and $w(S^i)\ge \lambda_{\rm bh}^i w$. As in the first inductive step, we take $S^{n+1}\subset f (S^n_{\mathbb{A}})$ if $S^n$ is to  the left of $\cW^\st_{\loc}(x_Q,f)$ or $S^{n+1}\subset f (S^n_{\mathbb{B}})$ otherwise. In both cases, we have that
$$
	w(S^{n+1})
	\ge \lambda_{\rm bh} w (S^n)
	\ge \lambda_{\rm bh}^{n+1} w.
$$
The choices of $\tau$ and $N$ imply that there is a first $n$ with $0\le n\le N$ such that $S^n$  intersects 
simultaneously 
$\cW^\st_{\loc} (x_P,f)$ and
$\cW^\st_{\loc} (x_Q,f)$. Hence  $f^n(S)$ 
contains a $\u$-strip  $S^\prime=S^n$ in-between that intersects simultaneously 
$\cW^\st_{\loc} (x_P)$ and
$\cW^\st_{\loc} (x_Q)$. Note that by construction 
$f^i(f^{-n}(S^n))$ is contained in $\mathbf{C}$ for all $i=0, \dots, n$. This completes the proof of the lemma.
\end{proof}

In what follows, for convenience, we consider a $\u$-strip $S$ together with a family of $\uu$-complete disks $\mathcal{D}_S=\{D^\uu_{S,i}\}_{i\in I}$ foliating $S$ (note that this foliation is not unique) and write $(S,\mathcal{D}_S)$. We say that a $\u$-strip $(S,\mathcal{D}_S)$ is {\emph{quasi to the right of  $\cW^\s_{\loc}(P,f)$}} if 
there is $i_0\in I$ with $D^\uu_{S,i_0} \cap \cW^\s_{\loc}(P,f)\ne\emptyset$
and for every $i\ne i_0$ the disk $D^\uu_{S,i}$ is to the right of $\cW^\s_{\loc}(P,f)$.
Note that this means, in particular, that the intersection with $\cW^\s_\loc(P,f)$ occurs in the strong unstable boundary of the strip.

\begin{lemma}\label{l.both}
Let $S$ be a $\u$-strip in-between which intersects simultaneously $\cW^\s_{\loc}(x_P,f)$ and $\cW^\s_{\loc}(x_Q,f)$.  Then $f(S_{\mathbb{B}})$ contains a $\u$-strip quasi to the right of $\cW^\s_{\loc}(P,f)$ with minimal width $\lambda_{\rm bh}\varrho$, where $\varrho$ was defined in (BH8).
\end{lemma}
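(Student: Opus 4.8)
The plan is to use the hypothesis that $S$ intersects both $\cW^\s_{\loc}(x_P,f)$ and $\cW^\s_{\loc}(x_Q,f)$ and push forward one step, tracking where the pieces $S_{\mathbb A}$ and $S_{\mathbb B}$ land. First I would recall from (BH7) that $f^{-1}(\cW^\s_{\loc}(P,f))\cap\mathbf C$ has exactly two components, namely $\cW^\s_{\loc}(P,f)$ itself (which sits inside $\mathbf C_{\mathbb A}$) and $\cW^\s_{\loc}(x_P,f)$; crucially $x_P$ is a homoclinic point of $P$ lying in $\mathbf C_{\mathbb B}$, so $\cW^\s_{\loc}(x_P,f)\subset\mathbf C_{\mathbb B}$ and $f(\cW^\s_{\loc}(x_P,f))\subset\cW^\s_{\loc}(P,f)$. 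Hence the curve of intersection $S\cap\cW^\s_{\loc}(x_P,f)$ is contained in $S_{\mathbb B}$, and its image under $f$ lies in $f(S_{\mathbb B})\cap\cW^\s_{\loc}(P,f)$. This produces the required single $\uu$-complete disk of the candidate strip that meets $\cW^\s_{\loc}(P,f)$; by Remark~\ref{rem:orientation} (orientation preservation of $Df$ on $E^\c$) this intersection happens in the strong unstable boundary of the image strip, i.e.\ the image strip is \emph{quasi} to the right, not straddling.

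Next I would produce the strip structure and the width estimate. Since $S$ is in-between and, by Remark~\ref{r.threecases}(3), intersects both $\cW^\s_{\loc}(x_P,f)$ and $\cW^\s_{\loc}(x_Q,f)$, conditions (BH5)--(BH6) (in the form used in \cite[Lemma 4.5]{BonDia:12} and recorded in Remark~\ref{r.npossibilitiesstrip}, last sentence) give that $f(S_{\mathbb B})$ contains a $\u$-strip $S'$ in-between, foliated by $\uu$-complete disks in-between; the foliation of $S$ by $\uu$-complete disks is carried forward by $f$ (the strong unstable cone field is forward invariant) and likewise the central foliation is carried forward with expansion, by \eqref{e.nexpansionCu}, $\lVert Df_x v\rVert\ge\lambda_{\rm bh}\lVert v\rVert$ for $v\in\cC^\u$. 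For the width: the subdisk $S\cap\mathbf C_{\mathbb B}$ of $S$ that sits between $\cW^\s_{\loc}(x_P,f)$ and $\cW^\s_{\loc}(x_Q,f)$ already has minimal width at least $\varrho$ by (BH8); applying the central expansion \eqref{e.nexpansionCu} along each $(S,\mathrm c)$-complete curve shows $w(f(S_{\mathbb B}$-piece$))\ge\lambda_{\rm bh}\varrho$, and this lower bound is inherited by the in-between strip $S'\subset f(S_{\mathbb B})$ one extracts. One must check that trimming $f(S_{\mathbb B})$ down to an honest in-between $\u$-strip $S'$ (so that its $\uu$-boundary lies in $\partial^{\uu}\mathbf C$ and it is foliated by in-between disks) does not destroy the $\varrho$-bound nor the quasi-to-the-right property near $P$ — this is the place where one uses that $x_P$ and $x_Q$ are positioned by (BH8) precisely so that the geometric superposition region has definite size.

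The main obstacle I expect is the bookkeeping that simultaneously (i) the intersection with $\cW^\s_{\loc}(P,f)$ ends up on the \emph{boundary} of the extracted strip, so that $S'$ is \emph{quasi} to the right of $\cW^\s_{\loc}(P,f)$ rather than $\c$-complete or interiorly crossing it, and (ii) the portion of $S'$ to the right of $\cW^\s_{\loc}(P,f)$ retains minimal width $\lambda_{\rm bh}\varrho$. Both hinge on identifying the correct subregion of $S$ to iterate — namely the slab of $S_{\mathbb B}$ bounded by the two curves $S\cap\cW^\s_{\loc}(x_P,f)$ and $S\cap\cW^\s_{\loc}(x_Q,f)$ — and on the orientation convention of Remark~\ref{rem:orientation}, which pins down on which side of $f(\cW^\s_{\loc}(x_P,f))=\cW^\s_{\loc}(P,f)$ the image of that slab falls. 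Once the right subregion is isolated, the rest is the same inductive-step computation as in Lemma~\ref{l.intersectsimult}: carry forward the two foliations, invoke (BH5)--(BH6) to land in-between, and multiply widths by $\lambda_{\rm bh}$. I would conclude by noting $f^{-1}(S')\subset S_{\mathbb B}\subset\mathbf C$, which is automatic here since $S'\subset f(S_{\mathbb B})$, so no extra domain-control statement is needed beyond what is already in the hypothesis of the lemma.
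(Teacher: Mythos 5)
Your argument is correct and is essentially the paper's own proof, which is given in two sentences: the simultaneous intersection with $\cW^\st_{\loc}(x_P,f)$ and $\cW^\st_{\loc}(x_Q,f)$ forces minimal width at least $\varrho$ via (BH8), the central expansion \eqref{e.nexpansionCu} multiplies widths by $\lambda_{\rm bh}$, and Remark~\ref{rem:orientation} pins down that the image lands quasi to the right of $\cW^\st_{\loc}(P,f)$. Your write-up merely makes explicit the bookkeeping (via (BH7), the location of $x_P$ in $\mathbf C_{\mathbb B}$, and the identification of the relevant slab of $S_{\mathbb B}$) that the paper leaves implicit.
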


\begin{proof}
The lemma follows recalling that $\u$-strips intersecting simultaneously  $\cW^\st_{\loc}(x_P,f)$ and $\cW^\st_{\loc}(x_Q,f)$ have minimal width at least $\varrho$ (see (BH8)) and using the expansion of central curves given by \eqref{e.nexpansionCu}. Recall also Remark~\ref{rem:orientation} about the preservation of the orientation. 
\end{proof}

Given a $\u$-strip $(S,\mathcal{D}_S)$ whose interior intersects $\cW^\s_{\loc}(P,f)$,  we consider the $\uu$-complete disk $D^\uu_{S,j}$ of $S$ intersecting $\cW^\s_{\loc}(P,f)$. 
Note that, since the intersection of $\cW^\s_{\loc}(P,f)$ with $S$ is transverse, the disk  $D^\uu_{S,j}$ is uniquely defined.
Observe  that $(S\setminus D^\uu_{S,j})$ has two connected components,
a component consisting of $\uu$-complete disks to the right of $\cW^\s_{\loc}(P,f)$ and a component consisting of $\uu$-complete disks to the left of $\cW^\s_{\loc}(P,f)$. We denote the closures of these components by $S^{\mathrm{right}}$ and $S^{\mathrm{left}}$ and observe that they intersect along the disk $D^\uu_{S,j}$. Note that $S^{\mathrm{right}}$ is quasi to the right of  $\cW^\s_{\loc}(P,f)$.
We can argue similarly with strips $S$ which are quasi to the right of  $\cW^\s_{\loc}(P,f)$, in that case $S=S^{\mathrm{right}}$ (thus $S^{\mathrm{left}}=\emptyset$).

Finally, note that there is a number $\nu>0$ such that every $\u$-strip $S$ that is
quasi to the right of  $\cW^\s_{\loc}(P,f)$ with $w(S)>\nu$ also intersects 
$\cW^\st_{\loc}(Q,f)$.

\begin{lemma}\label{l.fullys}
Consider a $\u$-strip $S$ with $S\cap \cW^\s_{\loc}(P,f)\ne \emptyset$ such that $w (S^{\mathrm{right}})=w>0$.
Define $L=L(w)$ as the first integer with $\lambda_{\rm bh}^{L}w>\nu$,
where $\lambda_{\rm bh}$ is the expansion constant in \eqref{e.nexpansionCu}.
Then for every $\ell\ge L$ it holds that $f^\ell(S^{\mathrm{right}})$  contains a $\c$-complete $\u$-strip $S^\ell$ such that $f^i(f^{-\ell}(S^\ell))\subset \mathbf{C}$ for all $i=0,\dots, \ell$.
 \end{lemma}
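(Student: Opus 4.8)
The plan is to push the $\u$-strip $S^{\mathrm{right}}$ forward, always retaining the piece lying in $\mathbf{C}_{\mathbb{A}}$, and to show that after at most $L=L(w)$ iterates one reaches a $\c$-complete $\u$-strip; once such a strip has appeared, $\c$-completeness propagates under further iteration and one only has to carry it along while keeping track of nested pre-images inside $\mathbf{C}$.

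The heart of the matter is the following one-step claim, to be proved by the same arguments as Lemma~\ref{l.both} (compare the proof of \cite[Lemma~4.5]{BonDia:12}), using conditions (BH7) and (BH9), the uniform central expansion~\eqref{e.nexpansionCu}, the horseshoe stretching of $\uu$-complete disks across $\mathbf{C}$, and the orientation preservation of Remark~\ref{rem:orientation}: if $T$ is a $\u$-strip quasi to the right of $\cW^{\s}_{\loc}(P,f)$ that does not meet $\cW^{\s}_{\loc}(Q,f)$, then $f(T_{\mathbb{A}})$ contains a $\u$-strip $T'$ which is either again quasi to the right of $\cW^{\s}_{\loc}(P,f)$ with $w(T')\ge\lambda_{\rm bh}\,w(T)$ or is $\c$-complete, and in both cases $f^{-1}(T')\subset T_{\mathbb{A}}\subset T$. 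The reason is that, since $\cW^{\s}_{\loc}(P,f)\subset\mathbf{C}_{\mathbb{A}}$, the (single) point $T\cap\cW^{\s}_{\loc}(P,f)$ lies in $T_{\mathbb{A}}$; after re-foliating $f(T_{\mathbb{A}})$ by $\uu$-complete disks and central curves, it carries a $\uu$-complete disk through $f(T\cap\cW^{\s}_{\loc}(P,f))\in\cW^{\s}_{\loc}(P,f)$, using the forward invariance $f(\cW^{\s}_{\loc}(P,f))\subset\cW^{\s}_{\loc}(P,f)$; the other $\uu$-complete disks of $f(T_{\mathbb{A}})$ remain strictly to the right of $\cW^{\s}_{\loc}(P,f)$ by orientation preservation; and the central curves of $T_{\mathbb{A}}$ are expanded by a factor at least $\lambda_{\rm bh}$ in view of~\eqref{e.nexpansionCu}, so that the minimal width of $f(T_{\mathbb{A}})$ is at least $\lambda_{\rm bh}\,w(T)$. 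Whether $T'$ is again quasi to the right of $\cW^{\s}_{\loc}(P,f)$ or already $\c$-complete is decided by whether or not $f(T_{\mathbb{A}})$ has reached $\cW^{\s}_{\loc}(Q,f)$.

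With this claim, I would set $S^{0}\eqdef S^{\mathrm{right}}$ and iterate: as long as $S^{k}$ does not meet $\cW^{\s}_{\loc}(Q,f)$, the one-step claim yields $S^{k+1}\subset f(S^{k}_{\mathbb{A}})$ quasi to the right of $\cW^{\s}_{\loc}(P,f)$ with $w(S^{k+1})\ge\lambda_{\rm bh}w(S^{k})\ge\lambda_{\rm bh}^{k+1}w$ and $f^{-1}(S^{k+1})\subset S^{k}$. Since $\lambda_{\rm bh}^{L}w>\nu$ by the definition of $L$, some $S^{k_{0}}$ with $k_{0}\le L$ must meet $\cW^{\s}_{\loc}(Q,f)$, by the choice of $\nu$ recalled just before the lemma; as $S^{k_{0}}$ also meets $\cW^{\s}_{\loc}(P,f)$ by construction, it is $\c$-complete. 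Finally, for $\ell\ge L\ge k_{0}$ the last assertion of Remark~\ref{r.npossibilitiesstrip} shows that if $S^{k}$ is $\c$-complete then $f(S^{k}_{\mathbb{A}})$ is $\c$-complete and hence contains a $\c$-complete $\u$-strip $S^{k+1}$ with $f^{-1}(S^{k+1})\subset S^{k}$; iterating up to step $\ell$ gives the required $\c$-complete $\u$-strip $S^{\ell}$, and from $f^{-1}(S^{k+1})\subset S^{k}$ at every step one gets $f^{-\ell}(S^{\ell})\subset\cdots\subset S^{0}=S^{\mathrm{right}}$, hence $f^{i}(f^{-\ell}(S^{\ell}))\subset S^{i}\subset\mathbf{C}$ for $i=0,\dots,\ell$. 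I expect the one-step claim to be the only real obstacle: one must verify that replacing $T$ by a $\u$-strip inside $f(T_{\mathbb{A}})$ neither destroys being quasi to the right of $\cW^{\s}_{\loc}(P,f)$ nor loses central width, and that the image admits the required re-foliation by $\uu$-complete disks and central curves — this is where the geometry encoded in (BH7)--(BH9), orientation preservation, and the horseshoe stretching is genuinely used, exactly as in the estimates behind Lemmas~\ref{l.intersectsimult} and~\ref{l.both}.
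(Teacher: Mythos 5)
Your proposal is correct and follows essentially the same route as the paper's (very terse) proof: set $S^0=S^{\mathrm{right}}$, iterate through the $\mathbb{A}$-parts so that at each step the image contains a $\u$-strip quasi to the right of $\cW^\s_{\loc}(P,f)$ with width expanded by $\lambda_{\rm bh}$, invoke the threshold $\nu$ to force intersection with $\cW^\st_{\loc}(Q,f)$ (hence $\c$-completeness) within $L$ steps, and then propagate $\c$-completeness via Remark~\ref{r.npossibilitiesstrip} while tracking the nested pre-images inside $\mathbf{C}$. The one-step claim you isolate is exactly what the paper asserts without proof by reference to the argument of Lemma~\ref{l.intersectsimult} and \cite[Lemma~4.5]{BonDia:12}, so your elaboration of it is a welcome filling-in of detail rather than a deviation.
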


\begin{proof}
The proof follows as in Lemma~\ref{l.intersectsimult}. Let $S^0= S^{\mathrm{right}}$ and note that $f(S^{0}_{\mathbb{A}})$ contains $\u$-strip $S^1$ that is quasi to the right of $\cW^\s_{\loc}(P,f)$ and satisfies $w (S^1)\ge \lambda_{\rm bh} w (S^{\mathrm{right}})$. Now it is enough to argue inductively.
\end{proof}

\subsection{Proof of Proposition~\ref{p.c.coveringproperty}}\label{proof:Prooo}
Consider a $\u$-strip $S$ in-between and let $w=w (S)$.
By Lemma~\ref{l.intersectsimult}, there is a first 
$$
	0\le n 
	\le N 
	\le \max\Big\{0,\frac{ \log \tau/w}{\log\lambda_{\rm bh}}\Big\}+1
	\le \frac{\lvert\log w\rvert}{\log \lambda_{\rm bh}} +  C_1,
$$
for some $C_1$ independent of $w$,
such that $f^n(S)$ contains a $\u$-strip $S'$ in-between that intersects simultaneously $\cW^\st_{\loc} (x_P,f)$ and $\cW^\st_{\loc} (x_Q,f)$.
By Lemma~\ref{l.both}, we have that $f(S_{\mathbb{B}})$ contains a  $\u$-strip $\widetilde S$ quasi to the right of $\cW^\s_{\loc}(P,f)$ with $w (\widetilde S)\ge \lambda_{\rm bh}\varrho$. 
Note that $\widetilde S^{\mathrm{right}}=\widetilde S$. 
Take $L=L(\lambda_{\rm bh}\varrho)$ as in Lemma~\ref{l.fullys} and note that $f^L (\widetilde S)$ contains a $\c$-complete $\u$-strip $\widehat{S}$.
Note that, by the lemma, $\nu<\lambda_{\rm bh}^L(\lambda_{\rm bh}\varrho)\le\lambda_{\rm bh}\nu$ and hence $L\le C_2$ for some  universal constant $C_2$ independent on $w$. 
Recalling that the image of a $\c$-complete $\u$-strip contains a $\c$-complete $\u$-strip, see Remark~\ref{r.npossibilitiesstrip}, 
we have that  for every 
$k\ge n+1+L$, the set $f^k(S)$ contains a $\c$-complete $\u$-strip $\widehat S$ such that
its pre-image $f^{-k} (\widehat{S})\eqdef\widehat{S}_k \subset S$ satisfies 
(a) and (b) in the proposition.
 Finally, taking $\ell (S)=n+1+L$ we have
$$
	\ell(S)=n+1+L
	\le  \frac{\lvert\log w\rvert}{\log \lambda_{\rm bh}} +  C_1 +C_2,
$$
ending the proof of the proposition. \hfill \qed

\subsection{Occurrence of blender-horseshoes}\label{sec:occurr}
We close this section recalling the following result about the existence of blender-horseshoes.

\begin{proposition}\label{p.blenderopendense}
There is a $C^1$-open and -dense subset $\cORTPH^1(M)$  of the set $\cRTPH^1(M)$ consisting of diffeomorphisms $f$ such that there are an unstable blender-horseshoe for $f^{n_+}$ for some $n_+\ge1$ and an unstable blender-horseshoe for $f^{-n_-}$ for some $n_-\ge1$.
\end{proposition}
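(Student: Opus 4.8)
The plan is to establish the $C^1$-open and $C^1$-dense occurrence of blender-horseshoes in $\cRTPH^1(M)$ by combining the robustness of blender-horseshoes with the key mechanism from \cite{BonDiaUre:02} that produces ``complete sections'' (which are precisely blender-horseshoes in modern terminology). First I would recall that the blender-horseshoe property, being defined via conditions (BH1)--(BH6) that involve transversality of invariant manifolds, invariant cone fields, and Markovian covering relations, is a $C^1$-open property: if $f^{n_+}$ has an unstable blender-horseshoe with domain $\mathbf C$, then so does $g^{n_+}$ for every $g$ in a $C^1$-neighborhood of $f$, since the hyperbolic continuation of $\Lambda$ persists, the cone fields can be taken slightly wider, and the finitely many geometric conditions (BH4)--(BH6) are open. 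The same applies to the stable blender-horseshoe for $f^{-n_-}$. Hence the set $\cORTPH^1(M)$ is automatically $C^1$-open in $\cRTPH^1(M)$, and it only remains to prove density.

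For density, fix $f\in\cRTPH^1(M)$ and recall that by definition there is a $C^1$-neighborhood $\cV_f$ all of whose elements satisfy (H1)--(H3); in particular every $g\in\cV_f$ has both strong foliations minimal. The strategy is to invoke the argument of \cite{BonDiaUre:02}: the proof there that the strong unstable foliation of a partially hyperbolic robustly transitive diffeomorphism is minimal passes through constructing, after an arbitrarily small $C^1$-perturbation, a ``complete section'' for the strong unstable foliation inside a suitable region, and such a complete section is exactly an unstable blender-horseshoe (for some iterate $f^{n_+}$). Symmetrically, applying the same reasoning to $f^{-1}$ (whose strong unstable foliation is the strong stable foliation of $f$, also minimal by (H3)) produces, after a small perturbation, a stable blender-horseshoe, i.e.\ an unstable blender-horseshoe for $f^{-n_-}$. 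Since these two constructions are carried out by $C^1$-small perturbations and the resulting objects are robust, a single perturbation $g$ arbitrarily $C^1$-close to $f$ can be arranged to possess both a blender-horseshoe contracting in the central direction and one expanding in the central direction; moreover $g$ can be taken inside $\cV_f$ so that $g\in\cRTPH^1(M)$, hence $g\in\cORTPH^1(M)$. This proves density.

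In carrying this out I would be careful about two points. First, the perturbations producing the unstable and the stable blender-horseshoe must be compatible: one performs the first perturbation, notes that the blender-horseshoe obtained is $C^1$-robust, and only then performs a further (even smaller) perturbation to create the second one, localized away from the domain of the first — this is possible because blender-horseshoes are supported in small regions of $M$ and the partially hyperbolic structure is global. Second, one must check that all perturbations stay within the neighborhood $\cV_f$ on which (H1)--(H3) are guaranteed, so that the perturbed map genuinely lies in $\cRTPH^1(M)$; this is immediate by taking the perturbation size smaller than the radius of $\cV_f$.

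The main obstacle is extracting the blender-horseshoe cleanly from \cite{BonDiaUre:02}: that paper predates the terminology and works with ``complete sections'' rather than verifying (BH1)--(BH6) directly, so the real content of the proof is the dictionary — explaining precisely why the complete sections constructed there satisfy the blender-horseshoe axioms (including the orientation requirement of Remark~\ref{rem:orientation}) and why the central-direction expansion/contraction can be arranged to give one blender-horseshoe of each type. Since this identification is essentially the one already used implicitly in the literature and the statement only asserts existence up to iterates $f^{\pm n_\pm}$, I expect the argument to amount to a careful citation-and-translation rather than new geometric work, but it is the step that requires the most attention.
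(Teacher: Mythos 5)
Your openness argument matches the paper's (which simply cites \cite[Lemma 3.9]{BonDia:12}), but the density part has a genuine gap. You defer the entire construction of the blender-horseshoes to \cite{BonDiaUre:02}, asserting that the ``complete sections'' built there can be produced by a small perturbation of any $f\in\cRTPH^1(M)$. That reference, however, works in dimension $3$ and under the additional hypothesis of a simple closed periodic curve tangent to $E^\c$ --- a hypothesis that elements of $\cRTPH^1(M)$ (defined on a manifold of arbitrary dimension, with no curve assumed) need not satisfy; the paper is explicit that \cite{BonDiaUre:02} only gives density inside the smaller set $\mathbf{RTC}^1(M)$. Moreover the logical direction in \cite{BonDiaUre:02} is the opposite of what you use: there, complete sections (blenders) are constructed from the geometric data of the cycle/periodic curve and are the \emph{input} used to \emph{prove} minimality; minimality of the foliations by itself is not the mechanism that produces them, so starting from (H3) does not hand you the blender.

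The paper's density argument follows a different perturbative chain that works in full generality. Nonhyperbolicity (H1) together with Ma\~n\'e's ergodic closing lemma \cite{Man:82} gives, $C^1$-open and -densely, saddles of both indices $\dim E^\ss$ and $\dim E^\ss+1$, necessarily with real central eigenvalues since $\dim E^\c=1$. Robust transitivity plus the connecting lemma \cite{Hay:97,BonCro:04} then yields, $C^1$-densely, heterodimensional cycles associated to these saddles. By \cite[Theorem 3.3]{BonDia:08} such cycles generate strong homoclinic intersections, and \cite[Theorem 4.1]{BonDia:08} converts these, after arbitrarily small $C^1$-perturbations, into blender-horseshoes for some iterate --- stable or unstable according to the perturbation chosen, which is how one obtains one blender of each type. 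If you want to salvage your route, you would have to extract from \cite{BonDiaUre:02} exactly this kind of cycle-based construction and verify it under (H1)--(H3) alone in arbitrary dimension; as written, the citation does not cover the statement being proved.
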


\begin{proof}
By \cite[Lemma 3.9]{BonDia:12},  having a blender-horseshoe is a $C^1$-open property. Let us now explain why having a blender-horseshoe is a $C^1$-dense property in $\cRTPH^1(M)$. In our context, due to the nonhyperbolicity assumption, we have that  $C^1$-open and -densely in $\cRTPH^1(M)$ the diffeomorphisms have simultaneously saddles of indices $\dim E^\ss$ and $\dim E^\ss +1$, this follows from the ergodic closing lemma in \cite{Man:82}. With the terminology in \cite{BonDia:08,BonDia:12}, the saddles of diffeomorphism in $\cRTPH^1(M)$  have real central eigenvalues (this follows from the fact that $\dim E^\c=1$). The robust transitivity assumption and the {\emph{connecting lemma}} \cite{Hay:97,BonCro:04}
 imply that $C^1$-densely in $\cRTPH^1(M)$ there are diffeomorphisms with heterodimensional cycles associated to these saddles with real central eigenvalues. By \cite[Theorem 3.3]{BonDia:08}, these cycles generate strong homoclinic intersections (saddle nodes whose strong stable and strong unstable manifolds meet quasi-transversally). Finally, \cite[Theorem 4.1]{BonDia:08} implies that by arbitrarily small $C^1$-perturbations these strong homoclinic intersections yield blender-horseshoes for some iterate of the map (stable or unstable, according to the chosen perturbation). We observe that though the terminology blender-horseshoe was not used in \cite{BonDia:08} the construction corresponds exactly to the prototypical blender-horseshoes in \cite[Section 5.1]{BonDia:12}. In this way, it follows we have shown that having (stable and unstable)  blender-horseshoes (for some iterate) is a $C^1$-dense  property in $\cRTPH^1(M)$.
\end{proof}

\begin{remark}[Choice of blender-horseshoes] \label{r.notation}
In what follows, we denote by  $\cORTPH^1(M)$ the $C^1$-open and -dense subset of $\cRTPH^1(M)$ of diffeomorphisms $f$ which have simultaneously an unstable  blender-horseshoe (for some iterates $f^{n_+}$) and an unstable blender-horseshoe (for $f^{-n_-}$). In what follows, for each $f\in\cORTPH^1(M)$ we fix an unstable blender-horseshoe 
$\Lambda^+$ with reference domain $\mathbf C^+$ with respect to some iterate $f^{n_+}$. For simplicity of notation, to emphasize the domain of the blender, we will write $(\Lambda_f^+,\mathbf C_f^+,f^{n_+})$ when referring to this blender  and we denote by  $P^+$ and $Q^+$ the corresponding fixed points (but  omitting the dependence of $n_+$, $P^+$, and $Q^+$ on $f$). 
\end{remark}

\subsection{Blender-horseshoes and strong foliations}
\label{ss.blenderfoliations}
Given an unstable blen\-der-horseshoe $(\Lambda_f^+,\mathbf C_f^+,f^{n_+})$ and a point $x\in \mathbf{C}_f^+$ denote by $\cF^{\uu}_{\mathbf{C}_f^+} (x)$ the connected component of $\cF^{\uu} (x) \cap \mathbf{C}_f^+$ containing $x$. We similarly define the set $\cF^\ss_{\mathbf C_f^+}(x)$. Above we  defined when a $\uu$-complete disk is \emph{in-between}.
Now, considering the sets
\[%\begin{equation}%\label{eq:walls}
	\Wall_f(R^+)
	\eqdef \bigcup_{x \in \cW^\st_{\loc}(R^+, f^{n_+})} \cF^\uu_{\mathbf{C}_f^+} (x)
	,\quad \text{ for }R\in\{P,Q\},
\]%\end{equation}
we say that a $\ss$-complete disk is \emph{in-between the $\uu$-walls} $\Wall_f(P^+)$ and $\Wall_f(Q^+)$ if it is disjoint with these two sets and intersects some $\uu$-complete disk in-between. The construction of a blender-horseshoe implies that there are $\ss$-complete disks in-between  the walls and that being in-between the walls is an open property.  

Given a set $U\subset \mathbf C^+_f$, we define its \emph{$\ss$-saturation} and its \emph{$\uu$-saturation} by
\[
	U^\ss
	\eqdef \bigcup_{x\in U}\cF^\ss_{\mathbf C_f^+}(x)
	\quad\text{ and }\quad
	U^\uu
	\eqdef \bigcup_{x\in U}\cF^\uu_{\mathbf C_f^+}(x).
\]
We say that $U$ is \emph{in-between the $\uu$-walls} of this blender-horseshoe if for every $x\in U$ the set $\cF^\ss_{\mathbf C_f^+}(x)$ is a $\ss$-complete disk  in-between the $\uu$-walls and  the set $\cF^\uu_{\mathbf C_f^+}(x)$ is a $\uu$-complete disk  in-between which is disjoint to the $\uu$-walls.

The fact that for every  $f\in \cORTPH^1(M)$ every strong unstable and every strong stable leaf is dense in the ambient space $M$, respectively, implies immediately the following lemma. Denote by $\cF^\uu(x,\delta)$ the ball centered at $x$ and with radius $\delta$ in the 
leaf $\cF^\uu(x)$ of the foliation $\cF^\uu$.  Define the set $\cF^\ss(x,\delta)$  similarly.

\begin{lemma}\label{l.minimalitysize}
Given $f \in \cORTPH^1(M)$ and an unstable blender-horse\-shoe $(\Lambda_f^+,\mathbf C_f^+,f^{n_+})$
consider open sets $V^+,U^+\subset\mathbf C_f^+$ which are in-between its $\uu$-walls such that $V^+\subset \overline{V^+}\subset U^+$.
\begin{itemize}
\item There is $\kappa_0=\kappa_0(V^+)>0$ such that every $x\in M$ and every $\kappa\ge \kappa_0$ the set $\cF^\uu(x,\kappa)$ contains a $\uu$-complete disk $\Delta^\uu\subset V^{+\uu}$ and the set $\cF^\ss(x,\kappa)$ contains a $\ss$-complete disk $\Delta^\ss\subset V^{+\ss}$.
\item  There is $\delta=\delta(U^+,V^+)>0$ such that for every $\uu$-complete disk $\Delta^\uu\subset V^{+\uu}$ and every $\u$-strip $S$ containing $\Delta^\uu$ of (inner) width $w(S)<\delta$ we have $S\subset U^{+\uu}$. In particular, $S$ is in-between.
\end{itemize}
\end{lemma}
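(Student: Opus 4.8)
The plan is to derive both statements directly from the minimality of the two strong foliations together with the compactness of the relevant pieces of $M$; the statements are essentially uniformity assertions obtained by a compactness argument from a pointwise property. First I would address the first item. Fix $V^+\subset\mathbf C_f^+$ in-between the $\uu$-walls. By definition, for each $x\in V^+$ the disk $\cF^\uu_{\mathbf C_f^+}(x)$ is a $\uu$-complete disk in-between disjoint from the walls, and similarly $\cF^\ss_{\mathbf C_f^+}(x)$ is a $\ss$-complete disk in-between the walls; being in-between is an open property, so we may shrink and assume $V^{+\uu}$ and $V^{+\ss}$ are (relatively) open. Now for a fixed $y\in M$, minimality of $\cF^\uu$ says the leaf $\cF^\uu(y)$ is dense in $M$, hence it meets the open set $V^{+\uu}$; following the leaf from $y$ to such an intersection point and then a little further, there is a radius $\kappa(y)$ so that $\cF^\uu(y,\kappa(y))$ already contains a full $\uu$-complete disk contained in $V^{+\uu}$. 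The map $y\mapsto\cF^\uu(y,\kappa)$ varies continuously (continuity of the strong unstable foliation in the $C^1$ partially hyperbolic setting, \cite{HirPugShu:77}), so for $y'$ in a neighborhood of $y$ the disk $\cF^\uu(y',\kappa(y)+1)$ still contains a $\uu$-complete disk inside $V^{+\uu}$; covering the compact manifold $M$ by finitely many such neighborhoods and taking $\kappa_0$ to be the maximum of the finitely many radii (plus a constant) gives a uniform $\kappa_0=\kappa_0(V^+)$ that works for every $x\in M$ and every $\kappa\ge\kappa_0$. The identical argument with $\cF^\ss$ gives the statement for $\ss$-complete disks, with a possibly larger $\kappa_0$; take the max of the two.

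For the second item I would argue by a continuity/compactness estimate inside the compact region $\overline{V^+}$ relative to $U^+$. Since $V^+\subset\overline{V^+}\subset U^+$ and $U^+$ is in-between the $\uu$-walls, for each $x\in\overline{V^+}$ the $\uu$-complete disk $\cF^\uu_{\mathbf C_f^+}(x)$ lies in $U^{+\uu}$ together with a uniform neighborhood of it (transverse to the disk), because being in-between and disjoint from the walls is open and $\overline{V^+}$ is at positive ``$\uu$-distance'' from $\partial U^{+\uu}$. Concretely, any $\u$-strip $S$ that contains a $\uu$-complete disk $\Delta^\uu\subset V^{+\uu}$ is a small $C^1$-thickening of $\Delta^\uu$ in the central direction: along each central curve of $S$ the deviation from $\Delta^\uu$ is controlled by the width $w(S)$ (and the bounded geometry of the strip, whose tangent planes lie in the unstable cone field). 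Hence if $w(S)$ is smaller than the (uniform over $\overline{V^+}$) transverse size of $U^{+\uu}$ around $V^{+\uu}$, the whole strip $S$ stays inside $U^{+\uu}$; since $U^{+\uu}$ is a union of $\uu$-complete disks in-between, $S$ is then in-between. Taking $\delta=\delta(U^+,V^+)$ to be this uniform transverse size (obtained by compactness of $\overline{V^+}$ and openness of the in-between condition) finishes the argument.

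The main obstacle I expect is making the ``transverse size'' and ``$\uu$-thickening'' language precise without a central foliation: a $\u$-strip is foliated by $\uu$-complete disks and by central curves, but these central foliations are not canonical, so one must phrase the estimate purely in terms of the width $w(S)$ and the unstable cone field $\cC^\u$ (which is globally defined and forward invariant by partial hyperbolicity). The clean way is to observe that, since every tangent plane to $S$ lies in $\cC^\u$ and $w(S)<\delta$, the strip $S$ is contained in the $\delta'$-neighborhood of $\Delta^\uu$ for some $\delta'=\delta'(\delta)\to 0$ as $\delta\to 0$ (the cone condition turns a bound on the central length into a bound on the distance to $\Delta^\uu$), and then to choose $\delta$ so that the $\delta'$-neighborhood of $V^{+\uu}$ is still inside $U^{+\uu}$; the existence of such a $\delta'$-neighborhood uniformly along $\overline{V^+}$ is exactly where compactness of $\overline{V^+}$ and the openness of being in-between the $\uu$-walls (already noted in the text) are used. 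The first item, by contrast, is a routine density-plus-compactness argument and should present no real difficulty.
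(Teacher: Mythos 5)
Your argument is correct and coincides with the paper's (the paper states that the lemma ``follows immediately'' from the minimality of the strong foliations and omits the details): the first item is exactly the density-of-leaves plus continuity-of-the-foliation plus compactness-of-$M$ argument, and the second item is the uniform-transverse-size argument via compactness of $\overline{V^+}$, openness of the in-between condition, and the unstable cone field controlling the strip by its width. No gaps.
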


Analogously to what was defined above, given any $x\in M$ and small $\delta>0$, denote by $\Delta^\uu(x,\delta)$ a $\uu$-disk centered at $x$ of radius $\delta$. Note that forward iterations of this disk by $f$ converges to segments of leaves of the strong unstable foliation $\cF^\uu$ while increasing exponentially its diameter. Analogously for $\ss$-disks $\Delta^\ss(x,\delta)$ and backward iterations. These observations lead to the following corollary of Lemma~\ref{l.minimalitysize}.

\begin{corollary}\label{cor:nofim}
In the setting of Lemma~\ref{l.minimalitysize} and with the same notation, 
for every $\delta>0$, there is $t_{\rm con}=t_{\rm con}(V^+,\delta)>0$ such that for every $x\in M$ and for every $t\ge t_{\rm con}$ the set $f^t(\Delta^\uu(x,\delta))$ contains a $\uu$-complete disk in $V^{+\uu}$ and the set $f^{-t}(\Delta^\ss(x,\delta))$ contains a $\ss$-complete disk in $V^{+\ss}$.

Note that $t_{\rm con}$ can be chosen such that
\[
	t_{\rm con}
	\le \frac{\log\,\lvert \kappa_0 (V^+) \rvert-\log\delta}
				{\log\lambda^\uu_{\rm min}} +C,
\]
where $\lambda^\uu_{\rm min}$ denotes the minimal expansion of $Df$ in the cone field $\cC^\uu$ and $C>0$ is some universal constant.
\end{corollary}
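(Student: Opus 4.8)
The plan is to deduce this directly from Lemma~\ref{l.minimalitysize} by pushing small $\uu$-disks forward until they become large enough, in the strong unstable leaf, to apply the first item of that lemma. First I would recall that $f$ is partially hyperbolic with the strong unstable cone field $\cC^\uu$ uniformly expanded by $Df$: there is a constant $\lambda^\uu_{\rm min}>1$ such that $\lVert Df_x(v)\rVert \ge \lambda^\uu_{\rm min}\lVert v\rVert$ for every $v\in\cC^\uu$. Consequently, for any $x\in M$ and any $\delta>0$, the forward iterate $f^t(\Delta^\uu(x,\delta))$ is again a $\uu$-disk (tangent to $\cC^\uu$, since this cone field is forward invariant), centered at $f^t(x)$, and containing the intrinsic ball $\cF^\uu(f^t(x),(\lambda^\uu_{\rm min})^t\delta)$ of radius growing at least geometrically. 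This is the one quantitative input beyond Lemma~\ref{l.minimalitysize}; it is standard and is exactly the ``converges to segments of leaves while increasing exponentially its diameter'' remark preceding the corollary.

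Next I would invoke the first bullet of Lemma~\ref{l.minimalitysize}: it provides $\kappa_0=\kappa_0(V^+)>0$ such that for every $y\in M$ and every $\kappa\ge\kappa_0$, the set $\cF^\uu(y,\kappa)$ contains a $\uu$-complete disk $\Delta^\uu\subset V^{+\uu}$. Now fix $\delta>0$ and choose $t_{\rm con}$ to be the least integer $t$ with $(\lambda^\uu_{\rm min})^t\delta \ge \kappa_0(V^+)$; explicitly
\[
	t_{\rm con}
	= \left\lceil \frac{\log \kappa_0(V^+) - \log\delta}{\log\lambda^\uu_{\rm min}} \right\rceil,
\]
which clearly satisfies the asserted bound with, say, $C=1$. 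Then for every $t\ge t_{\rm con}$ and every $x\in M$, setting $y=f^t(x)$ and $\kappa=(\lambda^\uu_{\rm min})^t\delta\ge\kappa_0$, we have that $\cF^\uu(y,\kappa)\subset f^t(\Delta^\uu(x,\delta))$ contains a $\uu$-complete disk in $V^{+\uu}$, which is precisely assertion (a). The statement for $\ss$-disks and backward iterates is obtained verbatim by applying the same argument to $f^{-1}$, whose restriction to the relevant region uniformly expands the strong stable cone field $\cC^\ss$ (backward invariance of $\cC^\ss$ for $f$ is exactly forward invariance for $f^{-1}$), together with the second half of the first bullet of Lemma~\ref{l.minimalitysize}.

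I do not expect a genuine obstacle here: the corollary is essentially a bookkeeping translation of Lemma~\ref{l.minimalitysize} into the language of finite-radius disks via uniform expansion. The only point requiring a little care is that $f^t(\Delta^\uu(x,\delta))$ genuinely contains the intrinsic metric ball $\cF^\uu(f^t(x),(\lambda^\uu_{\rm min})^t\delta)$ — this uses that $\Delta^\uu(x,\delta)$, being tangent to the forward-invariant cone $\cC^\uu$, maps to a disk which is a graph over a piece of the leaf $\cF^\uu(f^t(x))$ of uniformly bounded geometry, so that the exponential lower bound on the expansion of lengths along $\cC^\uu$ translates into an exponential lower bound on the intrinsic radius; this is where the fact that $\Delta^\uu(x,\delta)$ is honestly a disk tangent to the cone field (and not merely an abstract subset) is used. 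Given that, the choice of $t_{\rm con}$ above and the quoted bound are immediate.
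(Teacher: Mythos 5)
Your argument is correct and is essentially the paper's own (the corollary is stated there without proof, justified exactly by the observation that forward iterates of a $\uu$-disk grow exponentially and converge to strong unstable leaf segments, combined with the first bullet of Lemma~\ref{l.minimalitysize}). The one point to phrase carefully — which you do flag — is that $f^t(\Delta^\uu(x,\delta))$ does not literally contain the leaf ball $\cF^\uu(f^t(x),\kappa)$ but is only exponentially close to it, so one concludes via the openness of the condition ``contains a $\uu$-complete disk in $V^{+\uu}$'' (this is what the paper's subsequent safety remark makes precise), at the cost of the universal constant $C$ in the bound on $t_{\rm con}$.
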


We will close this section by the following  ``safety" remark which we will use in Section~\ref{ss:prelim}

\begin{remark}[Safety neighborhoods]\label{rmlem:openneigh}
	Given $f \in \cORTPH^1(M)$, consider an unstable blender-horse\-shoe $(\Lambda_f^+,\mathbf C_f^+,f^{n_+})$ and open sets $V^+,N^+$, $U^+\subset\mathbf C_f^+$ which are in-between its $\uu$-walls and such that $V^+\subset \overline{V^+}\subset N^+\subset\overline{N^+}\subset U^+$. There is $\theta>0$  with the following property: Consider any $\u$-strip $S$ with $w(S)<\theta$ and containing a  $\uu$-complete disk in $V^{+\uu}$. Then $S\subset N^{+\uu}$ and $\cF^\ss(x,\theta)\cap\mathbf C_f^+\subset U^{+\uu}$ for every $x\in S$.
\end{remark}

\begin{remark}[Safety domain of a blender]\label{rem:safety}
	In the same spirit of the remark above, we consider a \emph{safety neighborhood of the domain of a blender-horseshoe}. Note that given a (say) unstable blender-horseshoe  $(\Lambda^+_f,\mathbf C^+_f,f)$ the set $\Lambda_f$ is contained in the interior of $\mathbf{C^+_f}$. We can assume that there is a slightly greater domain $\widehat{\mathbf C}^+_f$ (also homeomorphic to a rectangle) containing $\mathbf{C}^+_f$ in its interior where the cone fields can be extended (satisfying the same invariance and expansion/contraction properties) and such that the maximal invariant set of $\widehat{\mathbf C}^+_f$ is also $\Lambda_f$. We define the strong stable and strong unstable boundaries of $\widehat{\mathbf C}^+_f$ similarly as we did for $\mathbf{C}^+_f$ and note that corresponding boundaries are disjoint (and hence at some positive distance). Since the cone fields are defined on $\widehat{\mathbf C}^+_f$ we can speak of \emph{$\ss$-complete} and \emph{$\uu$-complete} disks and \emph{$\u$-strips relative to $\widehat{\mathbf C}^+_f$} (we will emphasize such a  dependence).
Any of such $\ss$-disk complete relative to $\widehat{C}_f^+$ contains a $\ss$- disk relative to $\widehat{\mathbf C}_f^+$. Similarly, for $\uu$-complete disks and 
$\u$-strips. We can also define in the obvious way the sets $\cF^{\uu}_{\widehat{\mathbf{C}}_f^+} (x)$ and
$\cF^{\ss}_{\widehat{\mathbf{C}}_f^+} (x)$ and the saturations
$V^{+\uu}_{\widehat{\mathbf{C}}^+_f}$ and $V^{+\ss}_{\widehat{\mathbf{C}}^+_f}$
 of a subset $V^+$ of $\mathbf{C}^+_f$.
\end{remark}

The next result is a straightforward extension of Lemma \ref{l.minimalitysize} and Corollary~\ref{cor:nofim} where a safety constant $\safe$ is introduced.

\begin{remark}\label{reml.minimalitysizesafety}
Given $f \in \cORTPH^1(M)$ consider an unstable blender-horse\-shoe $(\Lambda_f^+,\mathbf C_f^+,f^{n_+})$, open sets $V^+,N^+\subset\mathbf{C}_f^+$, with $V^+\subset \overline{V^+}\subset N^+$, which are in-between the $\uu$-wall of the blender, and a safety domain $\widehat{\mathbf C}^+_f$.
Then there is a \emph{safety constant} $\safe=\safe(\mathbf C^+_f,\widehat{\mathbf C}^+_f,V^+,N^+)>0$ such that if $\Delta^\uu$ is a $\uu$-disk  which is complete 
relative to $\widehat{\mathbf C}^+_f$ and contained in $V^{+\uu}_{\widehat{\mathbf C}^+_f}$ then every $\uu$-disk at Hausdorff distance less than $\safe$ with $\Delta^\uu$ contains a $\uu$-disk which is complete relative to $\mathbf{C}^+_f$ and contained in $N^{+\uu}$.

Moreover, the number $t_{\rm con}$ in Corollary \ref{cor:nofim} can be chosen such that for every $x\in M$ and for every $t \ge t_{\rm con}$ the set $f^t(\Delta^\uu(x,\delta))$ contains a $\uu$-disk in $V^{+\uu}_{\widehat{\mathbf C}^+_f}$ that is complete relative to $\widehat{\mathbf C}^+_f$.

Similarly, the number $t_{\rm con}$ can be chosen such that for every $x\in M$ and for every $t \ge t_{\rm con}$ the set $f^t(\cF^\ss(x,\delta))$ contains a $\ss$-disk in $V^{+\ss}_{\widehat{\mathbf C}^+_f}$ that is complete relative to $\widehat{\mathbf C}^+_f$.
\end{remark}

\section{Approximation of ergodic measures}\label{s.approximation}

The following is just a reformulation of \cite[Proposition 3.1]{DiaGelRam:17}. It is a consequence of ergodicity, partial hyperbolicity, the definition of a Lyapunov exponent, the Brin-Katok theorem, the Birkhoff ergodic theorem, and the Egorov theorem. We refrain from repeating its proof that can be translated \emph{ipsis litteris}. Recall the definition of separated points in~\cite[Chapter 7]{Wal:82}.

\begin{proposition}\label{pro:BriKat}
Let $f\in\cRTPH^1(M)$ and $\mu\in\cM_{\rm erg}(f)$. Let $\alpha=\chi^\c(\mu)$.
Consider continuous functions $\varphi_1,\ldots,\varphi_\ell\colon M \to\bR$.

Then for every $\kappa\in(0,1)$, $r\in(0,1)$,  $\varepsilon_H\in(0,1)$, $\varepsilon_E>0$, and $\varepsilon_B>0$ there exists $\varepsilon_0>0$ such that for every $\varepsilon\in(0,\varepsilon_0)$ there are a positive integer $n_0$ and a subset $\Lambda'\subset M$ satisfying $\mu(\Lambda')>1-\kappa$ such that
 \begin{itemize}
 \item[(1)] there exists $K_0>1$ such that for every $n\ge0$ and every $x\in\Lambda'$ we have
\[
	K_0^{-1}e^{n(\alpha-\varepsilon_E)}
	\le \lVert Df^n|_{E^\c_x}\rVert
	\le K_0 e^{n(\alpha+\varepsilon_E)},
\]
and for every $j=1,\ldots,\ell$, denoting $\overline\varphi_j=\int\varphi_j\,d\mu$, we have
\[
	-K_0+n(\overline\varphi_j-\varepsilon_B)
	\le \sum_{\ell=0}^{n-1}\varphi_j(f^\ell(x))
	\le K_0+n(\overline\varphi_j+\varepsilon_B),
\]
\item[(2)] for every $m\ge n_0$ there is a set  of $(m,\varepsilon)$-separated points $\{x_i\}\subset \Lambda'$ of cardinality $M_m$ satisfying
\[
	M_m
	\ge  L_0^{-1}e^{m(h(\mu)-\varepsilon_H)}.
\]
\end{itemize}
\end{proposition}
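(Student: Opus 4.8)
The plan is to derive all three conclusions from classical ergodic theory, following essentially \emph{ipsis litteris} the proof of \cite[Proposition 3.1]{DiaGelRam:17}.

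First I would reduce item (1) to Birkhoff and Egorov. Since $E^\c$ is one-dimensional and continuous (partial hyperbolicity), the function $\psi(x)\eqdef\log\lVert Df|_{E^\c_x}\rVert$ is continuous and $\log\lVert Df^n|_{E^\c_x}\rVert=\sum_{k=0}^{n-1}\psi(f^k(x))$; by Oseledets' theorem and the definition of the central exponent, $\int\psi\,d\mu=\chi^\c(\mu)=\alpha$. Applying Birkhoff's ergodic theorem to $\psi,\varphi_1,\dots,\varphi_\ell$ and then Egorov's theorem (all the limit functions being finite, in fact constant), one obtains a set $\Lambda_1$ with $\mu(\Lambda_1)>1-\kappa/2$ and an integer $N_1$ so that, for every $n\ge N_1$ and every $x\in\Lambda_1$, the Birkhoff averages of $\psi$ are within $\varepsilon_E$ of $\alpha$ and those of $\varphi_j$ within $\varepsilon_B$ of $\overline\varphi_j$ for each $j$. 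For the finitely many $n<N_1$ the relevant sums are bounded in absolute value by a constant depending only on $N_1$, $\log\lVert Df^{\pm1}\rVert$ and $\max_j\lVert\varphi_j\rVert$, so after enlarging the constant one gets $K_0>1$ for which the two-sided estimates of (1) hold for \emph{all} $n\ge0$ and all $x\in\Lambda_1$.

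Next, item (2) comes from the Brin--Katok local entropy formula: for $\mu$-a.e.\ $x$ one has $\lim_{\varepsilon\to0}\liminf_{n}(-\tfrac1n\log\mu(B_n(x,\varepsilon)))=h(\mu)$, with $B_n(x,\varepsilon)$ the $(n,\varepsilon)$-Bowen ball. Using Egorov's theorem and monotonicity of $B_n(x,\cdot)$ in the radius, I would fix $\varepsilon_0>0$ so that for each $\varepsilon\in(0,\varepsilon_0)$ there are a set $\Lambda_2=\Lambda_2(\varepsilon)$ with $\mu(\Lambda_2)>1-\kappa/2$ and an integer $n_0=n_0(\varepsilon)$ with $\mu(B_m(x,\varepsilon))\le e^{-m(h(\mu)-\varepsilon_H/2)}$ for all $x\in\Lambda_2$, $m\ge n_0$. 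Put $\Lambda'\eqdef\Lambda_1\cap\Lambda_2$, so $\mu(\Lambda')>1-\kappa$ and item (1) holds on $\Lambda'$. For $m\ge n_0$, let $\{x_i\}_{i=1}^{M_m}\subset\Lambda'$ be a maximal $(m,\varepsilon)$-separated subset of $\Lambda'$ (finite by compactness of $M$); maximality makes the Bowen balls $B_m(x_i,\varepsilon)$ cover $\Lambda'$ (after a harmless adjustment of the radius), whence $1-\kappa<\mu(\Lambda')\le\sum_i\mu(B_m(x_i,\varepsilon))\le M_m\,e^{-m(h(\mu)-\varepsilon_H/2)}$, giving $M_m\ge L_0^{-1}e^{m(h(\mu)-\varepsilon_H)}$ with $L_0\eqdef(1-\kappa)^{-1}$. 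Since $\{x_i\}\subset\Lambda'$, item (1) also holds along these points.

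I expect the main obstacle to be running the entropy lower bound (Brin--Katok together with the covering inequality) and the metric/average control (Birkhoff together with Egorov) \emph{simultaneously on one and the same set} of measure $>1-\kappa$, and then extracting from that set a concretely $(m,\varepsilon)$-separated family whose cardinality is pinned down below by the covering estimate. The remaining points --- absorbing the finitely many small iterates into $K_0$, uniformizing the almost-everywhere statements via Egorov (legitimate since $\psi$ and the $\varphi_j$ are continuous and $h(\mu)<\infty$), and matching the separation radius with the Bowen-ball radius used in Brin--Katok --- are routine; the parameter $r$ does not enter the conclusion and only constrains, harmlessly, the choice of $\varepsilon_0$. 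This is precisely why the excerpt declines to reproduce the argument and refers to \cite[Proposition 3.1]{DiaGelRam:17}.
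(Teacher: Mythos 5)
Your proposal is correct and follows exactly the route the paper intends: it lists precisely these ingredients (Birkhoff, Egorov, Brin--Katok, the definition of the central exponent via the continuous cocycle $\log\lVert Df|_{E^\c}\rVert$ on the one-dimensional bundle, and the standard maximal-separated-set covering bound) and defers the details to \cite[Proposition 3.1]{DiaGelRam:17}, whose proof is the same argument you reconstruct. The points you flag as delicate --- intersecting the two Egorov sets, absorbing the finitely many initial iterates into $K_0$, and matching the separation radius with the Bowen-ball radius --- are handled correctly.
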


\section{Fake invariant foliations and distortion estimates}\label{s.fake}

\subsection{Fake invariant foliations}\label{s.fakefol}

Recall that we are considering a partially hyperbolic diffeomorphisms with a
splitting into three bundles $E^\ss\oplus E^\c \oplus E^\uu$. Let $E^\cs\eqdef E^\ss\oplus E^\c$ and $E^\cu\eqdef E^\c \oplus E^\uu$. 
Recall that the foliations $\cF^\ss$ tangent to $E^\ss$ and $\cF^\uu$ tangent to $E^\uu$ are well defined. However, as we are not assuming dynamical coherence (that is, that the bundle $E^\cs$ and the bundle $E^\cu$ integrate to foliations) we need to find substitutes which serve as foliations (almost) tangent to $E^\ss\oplus E^\c$, $E^\c$, and $E^\c\oplus E^\uu$. For that we use so-called \emph{fake invariant foliations} introduced in \cite{BurWil:10} stated in our context.

 Analogously to our notations above, given a foliation $\widehat\cW$ of some set $B$, for every $x\in B$ and $\rho>0$ denote by $\widehat\cW(x)$ the leaf of this foliation which contains $x$ and by $\widehat\cW(x,\rho)$ the  ball centered at $x$ and with radius $\rho$ in the leaf $\widehat\cW(x)$.

Similarly, as we did in Section~\ref{ss.defblender}, we define cone fields $\cC^\star_\vartheta$ of size $\vartheta$ about $E^\star$ for $\star\in\{\ss,\cs,\c,\cu,\uu\}$. Note that in the case of a unstable blender-horseshoe we had $\cC^\u=\cC^\cu$. 

\begin{proposition}[{\cite[Proposition 3.1]{BurWil:10}}]\label{p.fakefoliations}
	Let $f\in\cRTPH^1(M)$. Then for every $\vartheta>0$ there are constants $\rho>\rho_1>0$ such that for every $p\in M$ the neighborhood $B(p,\rho)$ is foliated by foliations $\widehat \cW^\ss_p$, $\widehat \cW^\c_p$, $\widehat \cW^\uu_p$, $\widehat \cW^\cs_p$, and $\widehat \cW^\cu_p$ with the following properties, for each $\beta \in \{\ss,\cs,\c,\cu,\uu\}$:
\begin{itemize}
\item
{\em{Almost tangency:}}
For each $q\in B(p,\rho)$ the leaf $\widehat \cW^\beta_p(q)$ is $C^1$ and the tangent space of $T_q \widehat \cW^\beta_p$ is in a cone field of size $\vartheta$ about $E^\beta(q)$;
\item
{\em{Local invariance:}} For each $q\in B(p,\rho)$, we have
\[\begin{split} 
	f (\widehat \cW^\beta_{p}(q,\rho_1))
	&\subset \widehat \cW^\beta_{f(p)}(f(q),\rho),\\
	f^{-1} (\widehat \cW^\beta_p(q,\rho_1))
	&\subset \widehat \cW^\beta_{f^{-1}(p)}(f^{-1}(q),\rho).
\end{split}\]
\item
{\em{Coherence:}}
 $\widehat \cW^\ss_p$ and  $\widehat \cW^\c_p$ subfoliate $\widehat \cW^\cs_p$.
 $\widehat \cW^\uu_p$ and  $\widehat \cW^\c_p$ subfoliate $\widehat \cW^\cu_p$. 
\end{itemize}
\end{proposition}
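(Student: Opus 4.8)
This is \cite[Proposition 3.1]{BurWil:10}, and the plan is to run the Hadamard--Perron graph transform of Burns and Wilkinson in a chart model; it goes through verbatim under (H1)--(H2). First I would transport the dynamics to the tangent spaces: for small $r>0$ set $\hat f_p\eqdef\exp_{f(p)}^{-1}\circ f\circ\exp_p$ on the $r$-ball of $T_pM$, so that $\hat f_p(0)=0$, $D_0\hat f_p=D_pf$, and $\lVert\hat f_p-D_pf\rVert_{C^1}\to0$ as $r\to0$ uniformly in $p$ by compactness of $M$. Interpolating with the linear map $D_pf$ outside the $r$-ball, one may regard $\hat f_p$ as a globally defined $C^1$ perturbation of the partially hyperbolic linear map $D_pf$, with cone conditions about the translates of $E^\ss(p)$, $E^\c(p)$, $E^\uu(p)$ and their sums that are uniform in $p$.

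Next I would produce, for each fixed $p$, the five foliations of $B(p,\rho)$ as fixed points of graph transforms along the orbit of $p$. A candidate leaf of $\widehat\cW^\cs_p$ is the graph of a $C^1$ map from a ball in $E^\cs(p)$ into $E^\uu(p)$ of derivative norm at most $\vartheta$; the foliations of the ball by such graphs form a complete metric space (with the $C^0$-distance), on which the transform induced by pushing forward with $\hat f_p,\hat f_{f(p)},\dots$ and renormalizing is, by the fact that $E^\uu$ dominates $E^\cs$ coming from (H2) together with the $C^1$-smallness above, a uniform fiber contraction; its unique fixed point is $\widehat\cW^\cs_p$. Symmetrically, using backward iterates and the fact that $E^\cu$ dominates $E^\ss$, I obtain $\widehat\cW^\cu_p$; analogous one-sided transforms, carried out inside the already-constructed center-stable (resp. center-unstable) leaves, produce $\widehat\cW^\ss_p$ and $\widehat\cW^\uu_p$ subfoliating $\widehat\cW^\cs_p$ (resp. $\widehat\cW^\cu_p$); and $\widehat\cW^\c_p$ is the foliation whose leaf through $q$ is $\widehat\cW^\cs_p(q)\cap\widehat\cW^\cu_p(q)$, a $C^1$ curve since the two foliations are transverse of complementary dimensions and their leaves meet continuously. \emph{Almost tangency} is then immediate, the $\vartheta$-cone conditions being preserved by the transform and inherited by its fixed point; \emph{local invariance} with $\rho_1<\rho$ is precisely the statement that $\hat f_p$ conjugates $f$ between the charts at $p$ and $f(p)$ while sending fixed-point families to fixed-point families, the gap $\rho_1<\rho$ absorbing the Lipschitz constant of $f$ and the chart size; and \emph{coherence} holds by construction.

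The main obstacle, exactly as in \cite{BurWil:10}, is the low regularity: since $f$ is only $C^1$, the cocycle $p\mapsto D_pf$ is merely continuous, so each graph transform is a fiber contraction only in the $C^0$-distance between families of $C^1$ graphs; hence one cannot expect the fake foliations to be jointly $C^1$, but exactly --- and only --- leafwise $C^1$ with continuous dependence on the base point, which is what is claimed. The two delicate points are to make the contraction estimates uniform over all base points and all orbit lengths at once, so that a single pair $\rho_1<\rho$ works on all of $M$, and to check that the leafwise intersection $\widehat\cW^\cs_p\cap\widehat\cW^\cu_p$ is a genuine $C^1$ foliation rather than merely a continuous partition into curves; both are settled in \cite[Section 3]{BurWil:10} and use nothing beyond the uniform partially hyperbolic structure granted by (H1)--(H2).
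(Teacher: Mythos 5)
The paper offers no proof of this proposition---it is quoted verbatim from \cite[Proposition 3.1]{BurWil:10}---and your sketch is exactly the graph-transform argument of that reference (lift to exponential charts, interpolate with $D_pf$ outside a small ball, contract on spaces of foliations-by-graphs along the orbit of $p$, intersect $\widehat\cW^\cs_p$ and $\widehat\cW^\cu_p$ to get $\widehat\cW^\c_p$, and subfoliate for the strong foliations), so it is correct and follows the same route. One cosmetic slip: $\widehat\cW^\cs_p$ and $\widehat\cW^\cu_p$ are not of complementary dimensions; rather their tangent spaces jointly span $T_qM$ and intersect in a line inside the $\vartheta$-cone about $E^\c(q)$, which is what makes the leafwise intersection a $C^1$ curve.
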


\begin{remark}\label{r.lambda}
Without loss of generality, after possibly changing the metric of $M$ (see~\cite{Gou:07}), if choosing $\vartheta$ sufficiently small, we assume  that there is $\lambda_{\rm fk}>1$ such that for every $p\in M$ and every $v\in\cC^\ss_\vartheta(p)$, $v\ne 0$, we have $\lVert Df_p(v)\rVert\le \lambda_{\rm fk}^{-1}\lVert v\rVert$ and for every $w\in\cC^\uu_\vartheta(p)$, $w\ne 0$, we have $\lVert Df^{-1}_p(w)\rVert\le \lambda_{\rm fk}^{-1}\lVert w\rVert$. 
\end{remark}

\subsection{Distortion in the the central direction}

Given a curve $\gamma$, let 
$$
	\mathrm{Dist} f|_\gamma\eqdef \sup_{x,y \in \gamma} 
		\frac{\lVert Df|_{T _x \gamma}\rVert}
			{\lVert Df|_{T _y \gamma } \rVert}
$$
be the maximal distortion of $f$ in the curve $\gamma$.

Consider the modulus of continuity
$$
	\mathrm{Mod}_\vartheta(\delta) 
	\eqdef  \sup \{  \mathrm{Mod}_\vartheta (\delta, x)\colon      x\in M \},
$$
where
$$
\mathrm{Mod}_\vartheta (\delta, x)\eqdef
\sup \left\{   
\log \frac{\lVert Df|_{{T _x \gamma}}\rVert}{\lVert Df|_{{T _y \gamma}}\rVert}
\colon y \in \gamma, \gamma\in \Gamma^\c_\vartheta (x,\delta)
   \right\},
$$
where $\Gamma_\vartheta^\c(x,\delta)$ denotes the family of curves centered at $x$ of length $2\delta$ and tangent to $\cC^\c_\vartheta$. 
Note that $\Mod_\vartheta(\delta)\to 0$ as $\delta \to 0$.

We need the following distortion control similar to \cite[Corollary 3.5]{DiaGelRam:17}. It  would be immediate if we would have a \emph{true} foliation tangent to the central direction $E^\c$. Since, however, we have to work with fake invariant central foliations and hence need to take into account the variation of their tangent spaces from the true tangent bundle $E^\c$, we provide its proof.

\begin{proposition}[Distortion for zero exponents]
\label{p.distortiondgr}
Given $\vartheta>0$ and $\varepsilon_D>0$ choose $\delta_0$ such that $\mathrm{Mod}_\vartheta (2\delta_0) \le \varepsilon_D$. Given $\varepsilon>0$ and $K>0$, let
$$
	r
	\eqdef \delta_0 K^{-1} e^{-m (\varepsilon+\varepsilon_D)}.
$$
Then for every every $x\in M$ and every $m\ge 1$ such that 
$$
	\lVert  Df^\ell|_{E}  \rVert \le K e^{\ell \varepsilon}
\quad \mbox{for all $\ell\in \{0,\dots,m\}$},
$$
where $E=T_x \widehat \cW_x^\c$ and $\widehat \cW_x^\c$ is the fake invariant foliation associated to the cone field of  size $\vartheta$ about $E^\c$ and $x$, we have
$$
	\lvert \log \mathrm{Dist} f^\ell |_{\widehat\cW^\c_x(x,r)} \rvert 
	\le \ell \varepsilon_D,
\quad \mbox{for every $\ell\in \{0,\dots,m\}$.}
$$
\end{proposition}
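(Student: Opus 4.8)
The plan is to prove the estimate by a joint induction on $\ell\in\{0,1,\dots,m\}$ which controls simultaneously the geometry of the iterated curve and the distortion along it. Write $\gamma\eqdef\widehat\cW^\c_x(x,r)$ and $\gamma_\ell\eqdef f^\ell(\gamma)$, so that $E=T_x\gamma$, $f^\ell(x)\in\gamma_\ell$, and $\lvert\gamma\rvert=2r$. The case $\ell=0$ of the hypothesis reads $1\le K$, so $K\ge1$, and together with $m\ge1$ this gives $2r\le2\delta_0$. We also use the freedom in the choice of $\delta_0$: pick it small enough that, besides $\mathrm{Mod}_\vartheta(2\delta_0)\le\varepsilon_D$, one has $4\delta_0\le\rho_1$, where $\rho_1<\rho$ are the radii provided by Proposition~\ref{p.fakefoliations} for the cone width $\vartheta$; this is possible since $\mathrm{Mod}_\vartheta(\delta)\to0$ as $\delta\to0$ (and $r$ is then recomputed accordingly).

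The inductive statement $H_\ell$ I would carry, for $\ell\in\{0,\dots,m\}$, is: $\gamma_\ell$ is a $C^1$ curve through $f^\ell(x)$, tangent to the central cone field $\cC^\c_\vartheta$, with $\lvert\gamma_\ell\rvert\le2\delta_0$, and hence $\gamma_\ell\subset\widehat\cW^\c_{f^\ell(x)}(f^\ell(x),2\delta_0)$. The base case $H_0$ is immediate from the almost-tangency property in Proposition~\ref{p.fakefoliations} and $2r\le2\delta_0$. For the step, assume $H_0,\dots,H_\ell$ with $\ell+1\le m$. I first claim that $\log\mathrm{Dist}\,f^{\ell+1}|_\gamma\le(\ell+1)\varepsilon_D$: since each $T_{f^j(y)}\gamma_j$ is one-dimensional, the chain rule gives, for $y,z\in\gamma$,
\[
\log\frac{\lVert Df^{\ell+1}|_{T_y\gamma}\rVert}{\lVert Df^{\ell+1}|_{T_z\gamma}\rVert}
=\sum_{j=0}^{\ell}\Bigl(\log\lVert Df|_{T_{f^j(y)}\gamma_j}\rVert-\log\lVert Df|_{T_{f^j(z)}\gamma_j}\rVert\Bigr),
\]
and by $H_j$ the curve $\gamma_j$ is tangent to $\cC^\c_\vartheta$ and has length $\le2\delta_0$, so both $f^j(y)$ and $f^j(z)$ lie on a curve tangent to $\cC^\c_\vartheta$, centered at $f^j(y)$, of length $\le4\delta_0$ (prolong $\gamma_j$ inside the fake leaf $\widehat\cW^\c_{f^j(x)}(f^j(x),\rho)$, which is legitimate because $4\delta_0\le\rho_1<\rho$). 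The definition of $\mathrm{Mod}_\vartheta$ then bounds each summand in absolute value by $\mathrm{Mod}_\vartheta(2\delta_0)\le\varepsilon_D$, and summing the $\ell+1$ terms proves the claim.

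To complete $H_{\ell+1}$, bound the length of $\gamma_{\ell+1}$ using the claim, the hypothesis on $\lVert Df^{\ell+1}|_E\rVert$, the definition of $r$, and $\ell+1\le m$:
\[
\lvert\gamma_{\ell+1}\rvert
=\int_\gamma\lVert Df^{\ell+1}|_{T_y\gamma}\rVert\,dy
\le 2r\,e^{(\ell+1)\varepsilon_D}\,\lVert Df^{\ell+1}|_E\rVert
\le 2r\,e^{(\ell+1)\varepsilon_D}Ke^{(\ell+1)\varepsilon}
=2\delta_0\,e^{(\ell+1-m)(\varepsilon+\varepsilon_D)}
\le2\delta_0 .
\]
Moreover $\gamma_\ell\subset\widehat\cW^\c_{f^\ell(x)}(f^\ell(x),\rho_1)$, so local invariance in Proposition~\ref{p.fakefoliations} gives $\gamma_{\ell+1}=f(\gamma_\ell)\subset\widehat\cW^\c_{f^{\ell+1}(x)}(f^{\ell+1}(x),\rho)$, which is $C^1$ and tangent to $\cC^\c_\vartheta$; with the length bound just obtained and $f^{\ell+1}(x)\in\gamma_{\ell+1}$ this is exactly $H_{\ell+1}$. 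Running the induction up to $\ell=m$ and reading off the displayed distortion claim for each $\ell\in\{0,\dots,m\}$ (which uses only $H_0,\dots,H_{\ell-1}$) yields $\log\mathrm{Dist}\,f^\ell|_{\widehat\cW^\c_x(x,r)}\le\ell\varepsilon_D$; since this distortion is $\ge1$, this is the assertion. The main obstacle is the circularity between the two quantities one wants to control: one needs $\gamma_j$ tangent to $\cC^\c_\vartheta$ and of length $\le2\delta_0$ in order to invoke $\mathrm{Mod}_\vartheta$ and bound the distortion, but one needs the distortion bound in order to keep $\lvert\gamma_{\ell+1}\rvert\le2\delta_0$ so that the local invariance of the fake central foliation can propagate tangency to the next iterate. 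The joint induction $H_\ell$ is precisely what breaks this circle; the remaining ingredients — that $\mathrm{Mod}_\vartheta$ applies to an arbitrary pair of points on a short central curve once the curve is recentered at one of them, and that $r$ is tuned so that the length is multiplied by at most $Ke^{m(\varepsilon+\varepsilon_D)}$ over the $m$ iterates and thus stays below $2\delta_0$ — are routine bookkeeping.
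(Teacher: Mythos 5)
Your proof is correct and follows essentially the same route as the paper's: a finite induction that uses the distortion bound up to step $\ell$ to keep $\lvert f^\ell(\gamma)\rvert$ below the scale at which $\mathrm{Mod}_\vartheta(2\delta_0)$ applies, and then the chain rule to advance the distortion estimate by one more $\varepsilon_D$. Your write-up is in fact somewhat more careful than the paper's, making explicit the propagation of tangency to $\cC^\c_\vartheta$ via local invariance of the fake foliations, the recentering needed to invoke $\mathrm{Mod}_\vartheta(2\delta_0)$ at an arbitrary point of the short central curve, and the auxiliary requirement $\delta_0\le\rho_1/4$ (which the paper only imposes later, in \eqref{eq:choosedelta0}).
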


\begin{proof}
Let $Z=\widehat\cW^\c_x(x,r)$. Let us denote by $\lvert\gamma\rvert$ the length of a curve $\gamma$.
	The proof is by (finite) induction on $\ell$. Note that the claim holds for $\ell=0$. Suppose that the claim holds for every $\ell=0,\ldots,i$ for some $i>0$. This means that we have $\lvert\log \dist f^\ell|_Z\rvert\le \ell\varepsilon_D$ for every $\ell\in\{0,\ldots,i\}$, which by the hypothesis of the proposition implies that
\[\begin{split}
	\lvert f^i(Z)\rvert
	&\le 	\lVert Df^i|_{T_xZ}\rVert 
			\cdot \dist f^i|_{Z} \cdot \lvert Z\rvert\\
	&\le Ke^{i\varepsilon}		
			\cdot e^{i\varepsilon_D} 
			\cdot r
	\le Ke^{i\varepsilon}		
			\cdot e^{i\varepsilon_D} 
			\cdot \delta_0 K^{-1} e^{-m (\varepsilon+\varepsilon_D)}		\\		
	&= \delta_0e^{-(m-i)(\varepsilon+\varepsilon_D)}			
	\le \delta_0.
\end{split}\]	
Hence $\lvert\log\dist f|_{f^i(Z)}\rvert\le \varepsilon_D$. Now we apply the chain rule and obtain $\lvert\log\dist f^{i+1}|_{f^{i+1}(Z)}\rvert\le i\varepsilon_D+\varepsilon_D$, which is the claim for $i+1$. We can repeat these arguments until $i=m$.
\end{proof}

\subsection{Distortion in the stable and unstable direction}

The next lemma is a standard consequence of uniform expansion/contraction along un-/stable (fake) foliations and sometimes referred to as tempered distortion. 
See for instance the proof of \cite[Lemma 2.4]{BocBonDia:16} in a similar context%
\footnote{There, the proof is stated for so-called flip-flop families and the only property required is that they are tangent to un-/stable expanding/contracting cone fields.}.

\begin{lemma}\label{l.strongdistortion}
Given $\vartheta>0$, let $\rho>\rho_1>0$ be as in Proposition~\ref{p.fakefoliations}.   For every $\varepsilon_D^\ss,\varepsilon_D^\uu>0$, there is $m_0\ge 1$ such that for every $m\ge m_0$ we have:
\begin{itemize}
\item for $x,y,p\in M$ satisfying $f^\ell(y)\in\widehat\cW^\uu_{f^\ell(p)}(f^\ell(x),\rho_1)$ for all $\ell\in\{0,\ldots,m\}$ we have 
\[
	\Big\lvert \log\frac{\lVert D f^\ell|_{T_x\widehat\cW^\c_p}\rVert}
					 	{\lVert D f^\ell|_{T_y\widehat\cW^\c_p}\rVert}\Big\rvert
	\le \ell\varepsilon_D^\uu.
\]
\item for $x,y,p\in M$ satisfying $f^{-\ell}(y)\in\widehat\cW^\ss_{f^{-\ell}(p)}(f^{-\ell}(x),\rho_1)$ for all $\ell\in\{0,\ldots,m\}$ we have 
\[
	\Big\lvert \log\frac{\lVert D f^{-\ell}|_{T_x\widehat\cW^\c_p}\rVert}
					 	{\lVert D f^{-\ell}|_{T_y\widehat\cW^\c_p}\rVert}\Big\rvert
	\le \ell\varepsilon_D^\ss.
\]
\end{itemize}
\end{lemma}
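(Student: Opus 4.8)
\textbf{Proof plan for Lemma~\ref{l.strongdistortion}.}

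The plan is to prove only the first item; the second follows by applying the first to $f^{-1}$ (whose strong unstable foliation is the strong stable foliation of $f$, and whose fake foliations are the corresponding fake foliations for $f^{-1}$). Fix $\vartheta>0$ and the associated $\rho>\rho_1>0$ from Proposition~\ref{p.fakefoliations}. The key quantitative input is that contraction along the strong unstable fake foliation is uniform: by Remark~\ref{r.lambda}, vectors in the strong unstable cone field $\cC^\uu_\vartheta$ are expanded by at least $\lambda_{\rm fk}>1$ under $Df$, hence backward-contracted; since the leaves of $\widehat\cW^\uu_p$ are tangent to $\cC^\uu_\vartheta$, the local invariance in Proposition~\ref{p.fakefoliations} shows that if $f^\ell(y)\in\widehat\cW^\uu_{f^\ell(p)}(f^\ell(x),\rho_1)$ for all $\ell\in\{0,\dots,m\}$, then the intrinsic distance $d_\ell$ between $f^\ell(x)$ and $f^\ell(y)$ inside $\widehat\cW^\uu_{f^\ell(p)}$ satisfies $d_\ell\le \lambda_{\rm fk}^{-(m-\ell)}\rho_1$ for all such $\ell$ — the points are exponentially close, with the exponential small parameter controlled from the ``future'' time $m$.

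Next I would control the ratio $\lVert Df^\ell|_{T_x\widehat\cW^\c_p}\rVert/\lVert Df^\ell|_{T_y\widehat\cW^\c_p}\rVert$ by a telescoping/chain-rule argument. Writing this log-ratio as $\sum_{j=0}^{\ell-1}\bigl(\log\lVert Df|_{E_j^x}\rVert-\log\lVert Df|_{E_j^y}\rVert\bigr)$, where $E_j^x=T_{f^j(x)}\widehat\cW^\c_{f^j(p)}$ and similarly for $y$, it suffices to bound each summand. The map $z\mapsto \log\lVert Df|_{T_z\widehat\cW^\c_q}\rVert$ is uniformly continuous in $z$ (and in the base point $q$, and in the one-dimensional tangent line, which varies continuously since the fake foliations are $C^1$ and $f$ is $C^1$), so there is a modulus of continuity $\omega$ with $\bigl|\log\lVert Df|_{E_j^x}\rVert-\log\lVert Df|_{E_j^y}\rVert\bigr|\le\omega(d_j)$ where $d_j$ is the distance between $f^j(x)$ and $f^j(y)$ measured along $\widehat\cW^\uu_{f^j(p)}$ (using that $\widehat\cW^\c_{f^j(p)}$ and $\widehat\cW^\uu_{f^j(p)}$ fit together coherently to estimate how far apart the two central leaves' tangent lines are). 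Combining with the exponential bound $d_j\le\lambda_{\rm fk}^{-(m-j)}\rho_1$, the total is
\[
	\Bigl|\log\frac{\lVert Df^\ell|_{T_x\widehat\cW^\c_p}\rVert}
				   {\lVert Df^\ell|_{T_y\widehat\cW^\c_p}\rVert}\Bigr|
	\le \sum_{j=0}^{\ell-1}\omega\bigl(\lambda_{\rm fk}^{-(m-j)}\rho_1\bigr)
	\le \sum_{k=1}^{\infty}\omega\bigl(\lambda_{\rm fk}^{-k}\rho_1\bigr).
\]
The series on the right converges (its tail is summable because $\omega$ is bounded and $\omega(\lambda_{\rm fk}^{-k}\rho_1)\to 0$; more precisely one only needs $\omega$ bounded and the argument geometrically decaying). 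Denote its value by $S(m)$; in fact the bound can be sharpened to a tail $\sum_{k\ge m-\ell+1}$ by reindexing, but the crude form already gives what we want.

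Finally, to get the linear-in-$\ell$ bound $\ell\,\varepsilon_D^\uu$ claimed in the statement, observe that the estimate above is actually \emph{uniform in $\ell$} — it is bounded by the convergent constant $\sum_{k\ge1}\omega(\lambda_{\rm fk}^{-k}\rho_1)$, which does not grow with $\ell$. Splitting the orbit segment $\{0,\dots,m\}$ into consecutive blocks of a fixed length $L$ chosen so large that $\sum_{k\ge1}\omega(\lambda_{\rm fk}^{-k}\rho_1)\le L\,\varepsilon_D^\uu$ is not quite the right move since the constant is $\ell$-independent; instead, simply take $m_0$ with the property that $\sum_{k\ge1}\omega(\lambda_{\rm fk}^{-k}\rho_1)\le m_0\,\varepsilon_D^\uu$, and then for $m\ge m_0$ and any $\ell\le m$ we get $\bigl|\log(\cdots)\bigr|\le m_0\,\varepsilon_D^\uu\le\ell\,\varepsilon_D^\uu$ whenever $\ell\ge m_0$, while for $\ell<m_0$ one uses the finer reindexed tail $\sum_{k\ge m-\ell+1}\omega(\lambda_{\rm fk}^{-k}\rho_1)$, which for $m\ge m_0$ large is $\le\ell\,\varepsilon_D^\uu$ directly. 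I expect the main obstacle to be bookkeeping the dependence of the modulus of continuity on the base point $p$ and on the tangent line of $\widehat\cW^\c$, i.e. checking that a single modulus $\omega$ works simultaneously for all $p$ and all nearby leaves — this is where compactness of $M$ and the $C^1$ regularity of the fake foliations (Proposition~\ref{p.fakefoliations}) enter, together with the coherence statement to compare $T\widehat\cW^\c_p$ at nearby points lying on a common strong unstable fake leaf.
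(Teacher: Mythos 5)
The paper does not actually prove this lemma; it only points to \cite[Lemma~2.4]{BocBonDia:16} and calls the statement ``tempered distortion''. Your skeleton is the standard one and is correct as far as it goes: backward contraction of vectors in $\cC^\uu_\vartheta$ together with local invariance gives the leafwise distance bound $d_j\le\lambda_{\rm fk}^{-(m-j)}\rho_1$, the chain rule and local invariance of $\widehat\cW^\c$ turn the log-ratio into a telescoping sum, and each summand is controlled by a modulus of continuity $\omega$ of $(z,E)\mapsto\log\lVert Df_z|_E\rVert$. (The uniformity of $\omega$ over base points $p$ and over the continuously varying tangent lines of the fake central leaves is indeed the technical point you flag at the end; it is needed in an essential way, since a per-step error depending only on the cone opening $\vartheta$ could not be absorbed into $\varepsilon_D^\uu$, which is chosen \emph{after} $\vartheta$.)

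The genuine gap is the assertion that $\sum_{k\ge1}\omega(\lambda_{\rm fk}^{-k}\rho_1)$ converges ``because $\omega$ is bounded and the argument geometrically decaying''. That implication is false: for a $C^1$ diffeomorphism $Df$ is merely continuous, so $\omega$ may decay as slowly as $1/\lvert\log t\rvert$, giving $\omega(\lambda_{\rm fk}^{-k}\rho_1)\asymp 1/k$ and a divergent series. Summability along geometric sequences is a Dini/H\"older-type condition that is not available in the $C^1$ setting, and this is exactly why the lemma asserts the linear bound $\ell\varepsilon_D^\uu$ rather than a uniform one; your last paragraph, which tries to recover the linear bound from the (unavailable) uniform one, is correspondingly muddled. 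The correct final step needs no convergence: given $\varepsilon_D^\uu$, pick $k_0$ with $\omega(\lambda_{\rm fk}^{-k}\rho_1)\le\varepsilon_D^\uu/2$ for all $k\ge k_0$, and set $C\eqdef\sup\omega$. In the reindexed sum $\sum_{k=m-\ell+1}^{m}\omega(\lambda_{\rm fk}^{-k}\rho_1)$ the terms with $k\ge k_0$ contribute at most $\ell\varepsilon_D^\uu/2$, while terms with $k<k_0$ occur only when $\ell>m-k_0+1$, number at most $k_0$, and contribute at most $k_0C$, which is $\le\ell\varepsilon_D^\uu/2$ as soon as $\ell>m-k_0+1\ge m_0-k_0+1\ge 2k_0C/\varepsilon_D^\uu$. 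Hence $m_0\eqdef\lceil k_0+2k_0C/\varepsilon_D^\uu\rceil$ works for all $m\ge m_0$ and all $\ell\in\{0,\ldots,m\}$. With this replacement (and the uniform-continuity bookkeeping you already identified), your argument is complete; the reduction of the second item to the first via $f^{-1}$ is fine.
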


\section{Construction of the hyperbolic sets $\Gamma^{\pm}$ in Theorem~\ref{t.approx}}\label{sec:5THEPROOF}

In this section we will prove the following result.

\begin{theorem}\label{teo:finally}
	Assume that $f$ is a $C^1$-diffeomorphism with a partially hyperbolic splitting $TM=E^\ss \oplus E^\c\oplus E^\uu$ with three non-trivial bundles such that $E^\ss$ is uniformly contracting, $E^\c$ is one-dimensional, and $E^\uu$ is uniformly expanding such that the strong stable and the strong unstable foliations are both minimal and $f$ has an unstable blender-horseshoe for $f^{n}$ for some $n\ge1$. 
	Then every nonhyperbolic ergodic measure $\mu$ of $f$ has the following properties. For every $\delta>0$ and every $\gamma>0$ there exists a basic set $\Gamma^+$ being central expanding whose topological entropy satisfies
\[	
	h_{\rm top}(f,\Gamma^+) \in  [h(\mu)-\gamma, h(\mu)+\gamma].
\]	
Moreover, every measure $\nu^+ \in\cM(f,\Gamma^+)$  is $\delta$-close to $\mu$. In particular, there are hyperbolic measures $\nu^+\in\cM_{\rm erg}(f,\Gamma^+)$ satisfying
$$
	\chi(\nu^+)\in(0,\delta)
	\quad\text{ and }\quad
	h(\nu^+)\in [h(\mu)-\gamma, h(\mu)+\gamma].
$$
\end{theorem}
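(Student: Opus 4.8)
The construction will produce a nested sequence of ``full rectangles'' inside the domain $\mathbf C^+_f$ of the fixed unstable blender-horseshoe, together with a large first-return time $N$, so that the images of these rectangles overlap each of them in a Markovian fashion, thereby defining a locally maximal hyperbolic basic set $\Gamma^+$ whose entropy and central exponent are governed by the skeleton data. Fix $\delta>0$ and $\gamma>0$. First I would invoke Proposition~\ref{pro:BriKat} with $\alpha=\chi^\c(\mu)=0$ and with a finite set of test functions $\varphi_1,\dots,\varphi_\ell$ rich enough to control the metric $D(\cdot,\mu)$ up to $\delta$; choosing the accuracy parameters $\varepsilon_H,\varepsilon_E,\varepsilon_B$ small and $\kappa$ small, this yields, for $m$ large, a set $\{x_i\}_{i=1}^{M_m}$ of $(m,\varepsilon)$-separated points in a ``good set'' $\Lambda'$ with $M_m\ge L_0^{-1}e^{m(h(\mu)-\varepsilon_H)}$, along which the central derivative is subexponentially close to $1$ and the Birkhoff sums of the $\varphi_j$ are controlled. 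This finite family of orbit segments of length $m$ is the \emph{skeleton} $\mathfrak X$.

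Next I would fix inside $\mathbf C^+_f$ open sets $V^+\subset\overline{V^+}\subset N^+\subset\overline{N^+}\subset U^+$ in-between the $\uu$-walls, a safety domain $\widehat{\mathbf C}^+_f$, and the associated constants $\kappa_0(V^+)$, $\theta$, $\safe$ from Lemma~\ref{l.minimalitysize}, Corollary~\ref{cor:nofim}, Remark~\ref{rmlem:openneigh}, and Remark~\ref{reml.minimalitysizesafety}. Using the fake invariant foliations of Proposition~\ref{p.fakefoliations} I would, around each skeleton point $x_i$, build a small full rectangle $R_i$ — a product of a $\uu$-disk, a central curve, and a $\ss$-disk of the fake foliations — of central width $r$ chosen as in Proposition~\ref{p.distortiondgr} so that distortion along the central fake leaves is controlled by $e^{m\varepsilon_D}$; the $(m,\varepsilon)$-separation guarantees the $R_i$ are pairwise disjoint once $r$ is small relative to $\varepsilon$. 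The dynamics then goes: (i) iterate $f^{t_{\rm con}}$ to push the unstable disk of $R_i$ onto a $\uu$-complete disk in $V^{+\uu}$ (Corollary~\ref{cor:nofim}); (ii) apply Proposition~\ref{p.c.coveringproperty}, the controlled expanding forward central covering of the blender, to produce after a further $\ell(S)$ iterates a $\c$-complete $\u$-strip crossing $\mathbf C^+_f$; (iii) iterate backward in the stable direction by a symmetric connecting time to realign with the stable disk of a target rectangle $R_j$. Summing these times gives a \emph{single} transition time $N$ (up to padding shorter returns, which the ``for every $\ell\ge\ell(S)$'' freedom in Proposition~\ref{p.c.coveringproperty} permits) such that $f^N(R_i)$ crosses every $R_j$ properly. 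This yields a full shift on $M_m$ symbols: the set $\Gamma^+=\bigcap_{k\in\mathbb Z}f^{kN}(\bigcup_i R_i)$ is hyperbolic, locally maximal, transitive, and central expanding because the blender expands the center by $\lambda_{\rm bh}>1$ while the connecting and skeleton pieces contribute only subexponentially (by the distortion Proposition~\ref{p.distortiondgr} and Lemma~\ref{l.strongdistortion}).

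Finally I would read off the three conclusions. The entropy: $h_{\rm top}(f,\Gamma^+)=\frac1N\log M_m$, and since $N=m+O(|\log r|)=m+O(m(\varepsilon+\varepsilon_D))$ is $m$ up to a multiplicative factor as close to $1$ as we like, this lies in $[h(\mu)-\gamma,h(\mu)+\gamma]$ for suitable choices of the small parameters and $m$ large. Weak$\ast$ closeness: any $\nu^+\in\cM(f,\Gamma^+)$ spends a proportion $\ge 1-O(\varepsilon+\varepsilon_D)$ of its time shadowing skeleton segments along which the Birkhoff averages of $\varphi_j$ are $\varepsilon_B$-close to $\int\varphi_j\,d\mu$, so $|\int\varphi_j\,d\nu^+-\int\varphi_j\,d\mu|$ is small, giving $D(\nu^+,\mu)<\delta$. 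Central exponent: for an ergodic $\nu^+$, $\chi(\nu^+)=\frac1N\big(\log\|Df^N|_{E^\c}\|\text{-average}\big)$ splits into the blender contribution $\ge(\text{number of blender steps})\log\lambda_{\rm bh}>0$ and an error of size $O(m(\varepsilon+\varepsilon_D))$, whence $\chi(\nu^+)\in(0,\delta)$; moreover $h(\nu^+)$ for the measure of maximal entropy on $\Gamma^+$ equals $h_{\rm top}(f,\Gamma^+)$. If $h(\mu)=0$ the skeleton has one orbit segment and $\Gamma^+$ degenerates to a single hyperbolic periodic orbit.

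\textbf{Main obstacle.} The delicate point is arranging a \emph{uniform} transition time $N$ while simultaneously keeping the central distortion over the whole length-$N$ excursion subexponential and strictly bounding the accumulated central expansion away from zero: the connecting times $t_{\rm con}$ and $\ell(S)$ depend (logarithmically) on the rectangle width $r$, which itself is tied through Proposition~\ref{p.distortiondgr} to $m$ and the distortion budget $\varepsilon_D$, so one must choose the cascade of parameters $\varepsilon_H,\varepsilon_E,\varepsilon_B,\varepsilon_D,\kappa,\vartheta$, then $\varepsilon$, then $r$, then $m$ in the right order, and verify that the fake-foliation rectangles $R_i$ are genuinely mapped across one another (not merely their central curves) — this is where the safety neighborhoods of Remarks~\ref{rmlem:openneigh} and~\ref{reml.minimalitysizesafety} and the absence of a true central foliation make the bookkeeping heavy.
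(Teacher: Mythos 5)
Your architecture coincides with the paper's: skeleton from Proposition~\ref{pro:BriKat}, connection to the blender domain in time $t_{\rm con}$ via minimality of both strong foliations, central expansion via Proposition~\ref{p.c.coveringproperty}, Markovian full rectangles saturated by strong stable leaves, $\Gamma^+$ as the maximal invariant set of $f^N$ with $N=2t_{\rm con}+m+\ell$, and the same three readings-off for entropy, weak$\ast$ distance and exponent. Whether you base the rectangles at the skeleton points or (as the paper does) inside $\mathbf C^+_f$ is immaterial.

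There is, however, one genuine gap, precisely at the point you flag as the ``main obstacle'' but do not resolve: the strict positivity of the central exponent on $\Gamma^+$. In your accounting the central width of the rectangles is taken directly from Proposition~\ref{p.distortiondgr}, i.e.\ $r\asymp e^{-m(\varepsilon+\varepsilon_D)}$, so the blending time is $\ell\asymp m(\varepsilon+\varepsilon_D)/\log\lambda_{\rm bh}$ and the guaranteed central gain over one return is $\ell\log\lambda_{\rm bh}\asymp m(\varepsilon+\varepsilon_D)$. But the worst-case central loss during the $m$ shadowing steps is also of order $m(\varepsilon+\varepsilon_D)$ (up to the constant $K_0$), and the two transition phases contribute a further \emph{negative} constant $2t_{\rm con}\log\mathfrak m$. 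The sign of the sum is therefore undetermined by what you wrote, and with the natural constants it can well be negative, so neither hyperbolicity of $\Gamma^+$ nor $\chi(\nu^+)>0$ follows. The paper's fix is a deliberate two-scale choice: the central size of the strips fed into the blender is set to $\delta_\c=e^{-m\sqrt{\eps_1}}$ (see \eqref{eq:formulapetrea}), decoupled from the distortion radius $r=\tfrac12\delta_\c K_0^{-1}e^{-m\eps_2}$ of \eqref{def:r}. Then Lemma~\ref{lem:katrin} gives $\ell_{m,\eps_1}\log\lambda_{\rm bh}\sim m(\sqrt{\eps_1}+2\eps_1)$, so the blender gain $m\sqrt{\eps_1}$ strictly dominates the loss $m\eps_1$ once $\eps_1$ is small (and swallows the constants for $m$ large), while $\sqrt{\eps_1}$ is still small enough that $N\sim m\bigl(1+O(\sqrt{\eps_1})\bigr)$, preserving the entropy window $[h(\mu)-\gamma,h(\mu)+\gamma]$, the bound $\chi(\nu^+)<\delta$, and the weak$\ast$ estimate. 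Without introducing this intermediate scale (or an equivalent device) your step ``whence $\chi(\nu^+)\in(0,\delta)$'' does not go through.
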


There is the corresponding result claiming the existence of a central contracting basic set under the assumption that there is a stable blender-horseshoe. Its proof follows by considering $f^{-1}$ instead of $f$.

Note that Theorem~\ref{t.approx} is an immediate consequence of the above theorem. For that just recall that every $f\in\cORTPH^1(M)$, by definition, has a stable and an unstable blender-horseshoe and the strong foliations are both minimal (see Proposition~\ref{p.blenderopendense}).

In the course of this section, given a diffeomorphism $f$ satisfying the hypotheses of Theorem~\ref{teo:finally} and a nonhyperbolic ergodic measure $\mu$, we will construct the basic set $\Gamma^{+}$ claimed in this theorem. This section is organized as follows. In the preliminary Section~\ref{ss:prelim} we collect and fix some quantifiers from previous sections.
In Section~\ref{skeleleton} we introduce so-called skeletons.
In Section~\ref{sss.sizefakecentral}, we complete the preparatory choice of quantifiers.
 In Section~\ref{ss:Gamma}, we define the set $\Gamma^+$. Its construction is geometrical and involves the results in previous sections. In Section~\ref{s.hypGamma}, we see that $\Gamma^+$ is a hyperbolic set with stable index  $s$ and entropy close to the one of $\mu$ (see Proposition~\ref{p.Gammahyp}). In Section~\ref{ss.birkhoff}, we see that the ergodic measures supported on $\Gamma^+$ are close to $\mu$ (see Proposition~\ref{p:weak}).

\subsection{Preliminaries}\label{ss:prelim}

Consider $f$ as in Theorem~\ref{teo:finally}, let $\mu\in\cM_{\rm erg,0}(f)$. Denote $h=h(\mu)$. 

Fix an unstable blender-horseshoe $(\Lambda_f^+,\mathbf{C}_f^+,f^{n_+})$ given by Proposition~\ref{p.blenderopendense}. 
Fix open sets $V^+,N^+,U^+\subset\mathbf C_f^+$ in-between the $\uu$-walls of the blender-horseshoe satisfying $V^+\subset\overline{V^+}\subset N^+\subset\overline{N^+}\subset U^+$.
Consider also a safety domain $\widehat{\mathbf C}^+_f$ and an associated safety constant $\safe=\safe({\mathbf C}^+_f,\widehat{\mathbf C}^+_f,V^+,N^+)>0$ as in Remark~\ref{reml.minimalitysizesafety}. 
The blender is endowed with cone fields $\cC^\star_{\vartheta_0}$, $\star\in\{\ss,\cs,\c,\cu,\uu\}$, of opening $\vartheta_0>0$ arbitrarily small, that extend to the whole manifold $M$.
Fix also $\theta>0$ as in Remark~\ref{rmlem:openneigh} (note that this constant implicitly depends on the opening of the cone fields fixed above).

For simplicity, in what follows, we assume that $n^+=1$. 

Let us denote 
\begin{equation}\label{eq:defmdf}
\begin{split}	
	\mathfrak m
		&\eqdef \min\{\|Df_x(v)\|\colon x\in M, v\in T_xM,\|v\|=1\},\\
	\mathfrak M
		&\eqdef \max\{\|Df_x(v)\|\colon x\in M, v\in T_xM,\|v\|=1\}.		
\end{split}\end{equation}	

Fix small numbers $\varepsilon_H>0$, $\varepsilon_E>0$, and $\eps_B>0$. Note that there is a finite set $\Phi=\{\varphi_j\}$ of continuous potentials over $M$ such that if a probability measure $\nu$ satisfies 
\begin{equation}
\label{eq:choiceofeB}
	\max_{\varphi_j\in\Phi}\left\lvert\int\varphi_j\,d\nu-\int\varphi_j\,d\mu\right\rvert
	<\eps_B
\end{equation}
then the distance between $\nu$ and $\mu$ is smaller than $2\eps_B$.

Choose $\vartheta\in(0,\vartheta_0)$  such that the variation of $Df$ in a $\vartheta$-cone field about $E^\c$ is bounded by $\varepsilon_E$, that is for every $x\in M$ 
\begin{equation}\label{eq:choisecone}
	\sup_{v,w\in \cC^\c_\vartheta(x),\lVert v\rVert=\lVert w\rVert=1}
	\Big|\log\frac{\lVert Df_x(v)\rVert}{\lVert Df_x(w)\rVert}\Big|
	<\varepsilon_E,
\end{equation}

For convenience, let us first restate Corollary~\ref{cor:nofim} in the setting of this section. 

\begin{remark}[Quantifiers in fake invariant and true foliations]\label{newr.l.inbetween}
Given  $\vartheta>0$,  consider the fake invariant foliation $\widehat \cW^\uu_p$, $p\in M$, associated to $\cC_\vartheta^\uu$ and the associate numbers $\rho>\rho_1>0$ given in Proposition~\ref{p.fakefoliations}.
Recall also the definition of  the expansion (and contraction) constant $\lambda_{\rm fk}>1$ along strong fake curves as in Remark~\ref{r.lambda}.
\end{remark}

We now fix the constants related to distortion properties.
Given  $\vartheta$ as above, fix $\varepsilon_D>0$ sufficiently small and let $\delta_0$ satisfy as in Proposition~\ref{p.distortiondgr} 
\begin{equation}\label{eq:choosedelta0}
	\Mod_\vartheta(2 \delta_0)
	\le \varepsilon_D
	\quad\text{ and also }\quad
	\delta_0\in(0,\rho_1).
\end{equation}
Moreover, given $\rho>\rho_1>0$ as in Remark~\ref{newr.l.inbetween},  we choose  $\eps_D^\ss>0$ and $\eps_D^\uu>0$ sufficiently small and we let $m_0$ as in Lemma~\ref{l.strongdistortion}.
We now let 
 \begin{equation}\label{eq:epsilonhattilde}
 \eps_2 \eqdef  2\varepsilon_E+\eps_D^\ss+\eps_D^\uu+\eps_D
 \quad \mbox{and}\quad
 \eps_1\eqdef \eps_2+ \eps_D^\ss
\end{equation}
and assume that $\varepsilon_E, \varepsilon^\ss_D, \varepsilon_D^\uu,\varepsilon_D$ were chosen small enough such that
\[
%	\max\{\eps_1,\eps_2\}=\eps_1	
	\eps_1
	\ll \log\lambda_{\rm fk}.
\]

\subsection{Skeletons}\label{skeleleton}
We now choose the ``skeleton'' for the construction of our basic set. Here, 
we reformulate the \emph{skeleton property} in \cite[Section 4]{DiaGelRam:17} to our partially hyperbolic context. It is a consequence of the interplay of  Proposition~\ref{pro:BriKat} which is entirely based on ergodic theory and the minimality of the strong un-/stable foliations which is a purely topological property.

For the first part, concerning the ergodic properties of our nonhyperbolic ergodic measure $\mu$, by Proposition~\ref{pro:BriKat}, for $\varepsilon_H$, $\varepsilon_E$, and $\varepsilon_B>0$ chosen as above and with $h=h(\mu)$ and $\alpha=\chi(\mu)$ there exists $\varepsilon_0>0$ such that for every $\varepsilon\in(0,\varepsilon_0)$ there are constants $K_0,L_0\geq 1$ and an integer $n_0\geq 1$ such that for every $m\geq n_0$ there exists a finite set  $\mathfrak{X}=\{x_i\}$ of $(m,\varepsilon)$-separated points satisfying the following:
\begin{itemize}
	\item $\card\mathfrak{X}\geq L_0^{-1}e^{m (h-\varepsilon_H)}$;
	\item for every $\ell=0,\ldots,m$ and every $i$ one has 
	\begin{equation}\label{e.derivadacentral}
	K_0^{-1}e^{-\ell(\alpha+\varepsilon_E)}\leq
	\lVert Df^\ell |_{E^\c_{x_i}} \rVert 
	\leq K_0e^{\ell(\alpha+\varepsilon_E)},
	\end{equation}
	recall that we assume $\alpha=\chi(\mu)=0$;%
	\footnote{We will use the case $\alpha<0$ in Section~\ref{sec:theo:main3twin}.}
	\item for every $\varphi_j\in\Phi$, denoting $\overline\varphi_j=\int\varphi_j\,d\mu$, for every $n\ge0$ and every $i$ we have
	\begin{equation}\label{e.birrrrkhoff}
	-K_0+n(\overline\varphi_j-\varepsilon_B)
	\le \sum_{\ell=0}^{n-1}\varphi_j(f^\ell(x_i))
	\le K_0+n(\overline\varphi_j+\varepsilon_B).
	\end{equation}
\end{itemize}  

For the following, fix now the quantifier $\varepsilon\in(0,\varepsilon_0)$ and the hence associated constants $K_0,L_0\ge1$ and $n_0$ as described above. 

 Fix now $\delta >0$ satisfying
\begin{equation}\label{e.niteroiuff}
	\delta
	<\min\Big\{\frac{\varepsilon}{15},\frac{\rho_1}{10}\Big\}.
\end{equation}

To complete the second part of the skeleton,  concerning minimality, with the quantifiers chosen in Remark~\ref{newr.l.inbetween}, consider a \emph{connecting time} $t_{\rm con}\eqdef t_{\rm con}(V^+,\vartheta,\delta/4)\ge1$ as in Corollary~\ref{cor:nofim} and Remark~\ref{reml.minimalitysizesafety}. Then for every $p\in M$, every $x\in B(p,\rho_1)$, and every $t\ge t_{\rm con}$
\begin{itemize}
\item $f^t ( \widehat \cW^\uu_p(x,\delta/4)) $ contains a $\uu$-disk contained in $V^{+\uu}_{\widehat{\mathbf C}^+_f}$,
\item $f^{-t} ( \cF^\ss(x,\delta/4))$ contains a $\ss$-disk contained in $V^{+\ss}_{\widehat{\mathbf C}^+_f}$,
\end{itemize}
where in both cases the disks are complete relative to $\widehat{\mathbf C}^+_f$.

Finally, to conclude, we make an appropriate choice of $m\ge n_0$. Let us first consider the constants
$$
	\mathfrak{N}
	\eqdef\mathfrak M^{-t_{\rm con}}\min\{ \theta,\frac\tau4,
				\frac\varepsilon{15}, \delta_0\}.
%				,
%\quad
%	\textcolor{blue}{\mathfrak{K}
%	\eqdef \frac{\log K_0+ \lvert\log \delta\rvert}
%			{\log \lambda_{\mathrm{fk}}+4 \eps_2}.}
$$
In what follows we fix now the remaining quantifier $m$ satisfying
\begin{equation} 
\label{eq:firstchoiceof}
	m\ge \max 
	\Big\{m_0,n_0,
	 \frac{\lvert\log \mathfrak{N}\rvert}{3\eps_1}, 	
		\frac{\log 12+\log\rho_1+\log K_0}
				{\log\lambda_{\rm fk}-(\sqrt{\eps_1}+\eps_1)}
		%\mathfrak{K}	 
		\Big\}.
\end{equation}

After these choices, we fix 
\begin{equation}\label{eq:skeleleton}
	\mathfrak{X}
	=\mathfrak{X}(h,\alpha,\delta,\varepsilon_H,\varepsilon_E,\varepsilon_B,\varepsilon,m)
	\eqdef \{x_i\}
\end{equation}
as above.	 

\subsection{Postliminaries}\label{sss.sizefakecentral}
To conclude the choices of quantifiers, let us now choose a size of fake central curves.
Given $m$ as in \eqref{eq:firstchoiceof} and $\eps_1$ as in~\eqref{eq:epsilonhattilde}, we define 
\begin{equation}\label{eq:formulapetrea}
	\delta_\c
	\eqdef e^{-m\sqrt{\eps_1}}. 
\end{equation}
For further reference, observe that with \eqref{eq:firstchoiceof} we have the following 
\begin{equation}\label{eq:effect}
	\delta_\c
	< \mathfrak M^{-t_{\rm con}}\min\{ \theta,\frac\tau4,
			\frac{\varepsilon}{15},\delta_0,\delta\}
\end{equation}
and with~\eqref{e.niteroiuff} and~\eqref{eq:firstchoiceof}
\begin{equation}\label{eq:fakem}
	\lambda_{\rm fk}^{-m}\delta
	< \lambda_{\rm fk}^{-m}\rho_1
	< \frac{1}{12}\delta_\c K_0^{-1} e^{-m\eps_1}
	< \frac{1}{12}\delta_\c K_0^{-1} e^{-m\eps_2}.
\end{equation}

\subsection{Construction of the set $\Gamma^+$}\label{ss:Gamma}

The set $\Gamma^+$ is obtained as follows. We will construct disjoint full rectangles $C_i$, $i\in \{1,\dots, \card \mathfrak{X}\}$, of the form
$$
C_i=\bigcup_{x\in R_i^\cu} \cF^\ss_{\mathbf C^+_f}(x),
$$
where $R_i^\cu$ is a small disk in some fake center unstable set contained in the domain $\mathbf C^+_f$ of the blender-horseshoe. We see that there is some $N\sim m$, independent of $i$,  such that $f^N(C_i\cap R_i^\cu)$ is a $\c$-complete $\u$-strip of the blender-horseshoe.%
\footnote{In the following we will write $\psi_1(m)\sim \psi_2(m)$ if $\lim_{m\to\infty}\psi_1(m)/\psi_2(m)=1$.} %
  This will imply that $f^N(C_i)$ intersects in a ``Markovian way" each $C_j$. We will define $\Gamma^+$ as the orbit of the maximal invariant set of $f^N$ in the union of these rectangles, see Section~\ref{sss.Gamma}. We see that, for $x\in \Gamma^+$, $Df^N|_{E^\cu_x}$ is uniformly expanding, $Df^N|_{E^\ss_x}$ is uniformly contracting, and the central exponents of points in $\Gamma^+$ are small. Moreover, the restriction of $f^N$ to $\Gamma^+$ is conjugate to the full shift on $\card\mathfrak{X}$ symbols. Thus, since $N\sim m$, this set has entropy close to $h(\mu)$. For details see Proposition~\ref{p.Gammahyp} in Section~\ref{s.hypGamma}.
Finally, in Section~\ref{ss.birkhoff} we see that the ergodic measures supported on $\Gamma^+$ are close to $\mu$.

We now explain the construction of the rectangles $R^\cu_i$. Each rectangle $R^\cu_i$ is obtained
foliating a central fake curve $\gamma_i$ (centred at some point $y_i$) by small fake $\uu$-sets.
The orbit of $y_i$ ``shadows" during $m$ iterates the point $x_i$ in the skeleton. From this we get that, during these $m$ iterates, ``their central derivatives" are close. Thereafter, and after a controlled time, this point lands in the domain of the blender. Similarly, the point $y_i$ after a controlled time backward   lands in that domain.
The choice of the point $y_i$ is done in Section~\ref{sss.beginning}.
To construct the curve $\gamma_i$ we need some estimates of the central derivative, see Section~\ref{sss.centralestimates}. After some preliminary  constructions (auxiliary rectangles in Section~\ref{sec:fakecurect} and blending-like properties in Section~\ref{sss.blending}), the construction of the rectangles $R^\cu_i$ is completed in 
Section~\ref{sss.finalconstruction}. Finally, in Section~\ref{sss.separation}, we see that the orbits of points in different full rectangles $C_i$ are sufficiently separated.

\subsubsection{Beginning of the construction}\label{sss.beginning}
Now we apply the connecting properties in Section~\ref{skeleleton} to each point of the skeleton $\mathfrak{X}=\{x_i\}$ in~\eqref{eq:skeleleton} as follows. 
We will consider fake invariant foliations $\widehat\cW^\star_{f^\ell(z_i)}$,  $\star\in\{\c,\cu,\uu\}$, for $\ell=0,\ldots,m$, where $\widehat\cW^\cu$ is subfoliated by $\widehat\cW^\c$ and $\widehat\cW^\uu$, respectively. We will also consider the (true) strong stable foliation $\cF^\ss$.

Recall that there is $\Delta^\ss\subset \cF^\ss(x_i,\delta/4)$ such that $f^{-t_{\rm con}}(\Delta^\ss)$ is a $\ss$-disk contained in $V^{+\ss}_{\widehat{\mathbf C}^+_f}$ complete relative to $\widehat{\mathbf C}^+_f$. This implies that there exists $z_i\in\Delta^\ss$ such that 
\[
	f^{-t_{\rm con}}(z_i)\in V^+_{\widehat{\mathbf C}^+_f}
	\quad\text{ and }\quad 
	f^{-t_{\rm con}}(\widehat\cW^\uu_{z_i}(z_i,\delta/4))
		\subset N^{+\uu}_{\widehat{\mathbf C}^+_f}.
\]	 
Observe now that there is $\Delta^\uu\subset\widehat\cW^\uu_{z_i}(z_i,\delta/4)$ such that 
\[
	f^\ell(\Delta^\uu)\subset\widehat\cW^\uu_{f^\ell(z_i)}(f^\ell(z_i),\delta/4)
	\quad\text{ for every }\ell\in\{0,\ldots,m\}
\]
 and 
 \[
 	f^m(\Delta^\uu)= \widehat\cW^\uu_{f^m(z_i)}(f^m(z_i),\delta/4).
\]	
Hence, again by Remark~\ref{newr.l.inbetween} and the connecting properties  for the skeleton in Section~\ref{skeleleton}, there is $\Delta^\uu_0\subset f^m(\Delta^\uu)$ such that $f^{t_{\rm con}}(\Delta^\uu_0)$ is a $\uu$-disk contained in $V^{+\uu}_{\widehat{\mathbf C}^+_f}$ complete relative to $\widehat{\mathbf C}^+_f$.
Finally, choose any point $y_i\in f^{-m}(\Delta^\uu_0)$. These considerations prove the following lemma (compare also Figure~\ref{Fig:plug}).

\begin{lemma}\label{l.choosingpoints}
For every $i=1,\ldots,\card\mathfrak{X}$ there exist points $z_i$ and $y_i$ having the following properties:
\begin{enumerate}
	\item $f^{-t_{\rm con}}(y_i)\in N^+\subset\mathbf C^+_f$.
	\item $f^{t_{\rm con}+m}(y_i)\in\mathbf{C}_f^+$ is  in  some $\uu$-disk complete relative to $\widehat{\mathbf C}^+_f$ in $V^{+\uu}_{\widehat{\mathbf C}^+_f}$.
	\item $z_i\in \cF^{\ss}(x_i,\delta/4)$.
	\item $f^\ell(y_i)\in\widehat\cW^\uu_{f^\ell(z_i)}(f^\ell(z_i),\delta/4)$, for each $\ell=0,\ldots,m$. 
\end{enumerate}
\end{lemma}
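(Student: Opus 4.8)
The plan is to produce $z_i$ and $y_i$ by threading together three already-established facts along the orbit segment $x_i,f(x_i),\dots,f^m(x_i)$: the minimality-based connecting statement (Corollary~\ref{cor:nofim} and its safety refinement in Remark~\ref{reml.minimalitysizesafety}), the local invariance of the fake invariant foliations (Proposition~\ref{p.fakefoliations}), and the uniform $\lambda_{\rm fk}^{\pm1}$-expansion/contraction along fake strong leaves (Remark~\ref{r.lambda}). No perturbation is involved; the whole argument is a bookkeeping of sub-disks and of the radii of fake leaves, using $\delta<\rho_1/10$ throughout so that fake balls of radius $\delta/4$ are defined.

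First I would run the backward connecting property at $x_i$. With the connecting time $t_{\rm con}=t_{\rm con}(V^+,\vartheta,\delta/4)$ fixed in Section~\ref{skeleleton}, the set $f^{-t_{\rm con}}(\cF^\ss(x_i,\delta/4))$ contains a $\ss$-disk $\Delta^\ss_-$ complete relative to $\widehat{\mathbf C}^+_f$ and contained in $V^{+\ss}_{\widehat{\mathbf C}^+_f}$; put $\Delta^\ss\eqdef f^{t_{\rm con}}(\Delta^\ss_-)\subset\cF^\ss(x_i,\delta/4)$. Using the safety constant $\safe$ from Remark~\ref{reml.minimalitysizesafety} together with the continuity of the fake foliations, I then select $z_i\in\Delta^\ss$ sitting near the center of $\Delta^\ss$ (so that $f^{-t_{\rm con}}(z_i)$ is near the center of $\Delta^\ss_-$, in particular $f^{-t_{\rm con}}(z_i)\in V^+$) and close enough to it that the fake unstable ball $f^{-t_{\rm con}}(\widehat\cW^\uu_{z_i}(z_i,\delta/4))$ stays within Hausdorff distance $\safe$ of a $\uu$-disk complete relative to $\widehat{\mathbf C}^+_f$ inside $V^{+\uu}_{\widehat{\mathbf C}^+_f}$, hence lands inside the safety region $N^{+\uu}\subset\mathbf C^+_f$ of the blender. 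This already establishes property (3) and prepares property (1).

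Next I would carry a sub-disk of $\widehat\cW^\uu_{z_i}(z_i,\delta/4)$ all the way forward. Pulling the full ball $D_m\eqdef\widehat\cW^\uu_{f^m(z_i)}(f^m(z_i),\delta/4)$ back one step at a time, local invariance in Proposition~\ref{p.fakefoliations} together with the $\lambda_{\rm fk}^{-1}$-contraction of $Df^{-1}$ on $\cC^\uu_\vartheta$ gives $f^{-k}(D_m)\subset\widehat\cW^\uu_{f^{m-k}(z_i)}(f^{m-k}(z_i),\lambda_{\rm fk}^{-k}\delta/4)$ for $k=0,\dots,m$; setting $\Delta^\uu\eqdef f^{-m}(D_m)$ I obtain a sub-disk of $\widehat\cW^\uu_{z_i}(z_i,\delta/4)$ with $f^\ell(\Delta^\uu)\subset\widehat\cW^\uu_{f^\ell(z_i)}(f^\ell(z_i),\delta/4)$ for every $\ell=0,\dots,m$ and $f^m(\Delta^\uu)=D_m$. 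Then I apply the forward connecting property to the fake $\uu$-ball $D_m$ at $f^m(z_i)$: $f^{t_{\rm con}}(D_m)$ contains a $\uu$-disk $D$ complete relative to $\widehat{\mathbf C}^+_f$ inside $V^{+\uu}_{\widehat{\mathbf C}^+_f}$, and I put $\Delta^\uu_0\eqdef f^{-t_{\rm con}}(D)\cap f^m(\Delta^\uu)$, so that $\Delta^\uu_0\subset f^m(\Delta^\uu)$ and $f^{t_{\rm con}}(\Delta^\uu_0)=D$.

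Finally I pick any $y_i\in f^{-m}(\Delta^\uu_0)\subset\Delta^\uu$ and read off the four properties: $f^\ell(y_i)\in f^\ell(\Delta^\uu)\subset\widehat\cW^\uu_{f^\ell(z_i)}(f^\ell(z_i),\delta/4)$ gives (4), and for $\ell=0$ it gives $y_i\in\widehat\cW^\uu_{z_i}(z_i,\delta/4)$, so that $f^{-t_{\rm con}}(y_i)$ lies in the region built in step one, which is (1); $z_i\in\Delta^\ss\subset\cF^\ss(x_i,\delta/4)$ is (3); and $f^{t_{\rm con}+m}(y_i)=f^{t_{\rm con}}(f^m(y_i))\in f^{t_{\rm con}}(\Delta^\uu_0)=D$ is (2). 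The step I expect to need genuine care is the backward selection of $z_i$: unlike the forward iterates, the backward fake $\uu$-ball of a generic point of $\Delta^\ss$ need not be ``complete'', so one must use the safety constant $\safe$ and the near-central position of $z_i$ in $\Delta^\ss$ to guarantee that the whole ball — not merely a sub-disk — lands safely inside $N^+\subset\mathbf C^+_f$; everything else is routine radius bookkeeping with $\lambda_{\rm fk}>1$.
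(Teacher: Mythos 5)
Your proposal is correct and follows essentially the same route as the paper's own construction in Section~\ref{sss.beginning}: backward connection along $\cF^\ss(x_i,\delta/4)$ to select $z_i$ (with the safety remarks guaranteeing that the pulled-back fake $\uu$-ball lands in $N^{+\uu}$), forward pullback of $\widehat\cW^\uu_{f^m(z_i)}(f^m(z_i),\delta/4)$ via local invariance and $\lambda_{\rm fk}$-contraction to get $\Delta^\uu$, forward connection at time $m$ to get $\Delta^\uu_0$, and then $y_i\in f^{-m}(\Delta^\uu_0)$. The only difference is that you make explicit the radius bookkeeping and the role of the safety constant in choosing $z_i$, which the paper leaves implicit.
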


\begin{figure}[h] 
\begin{overpic}[scale=.5, %grid, tics=10
]{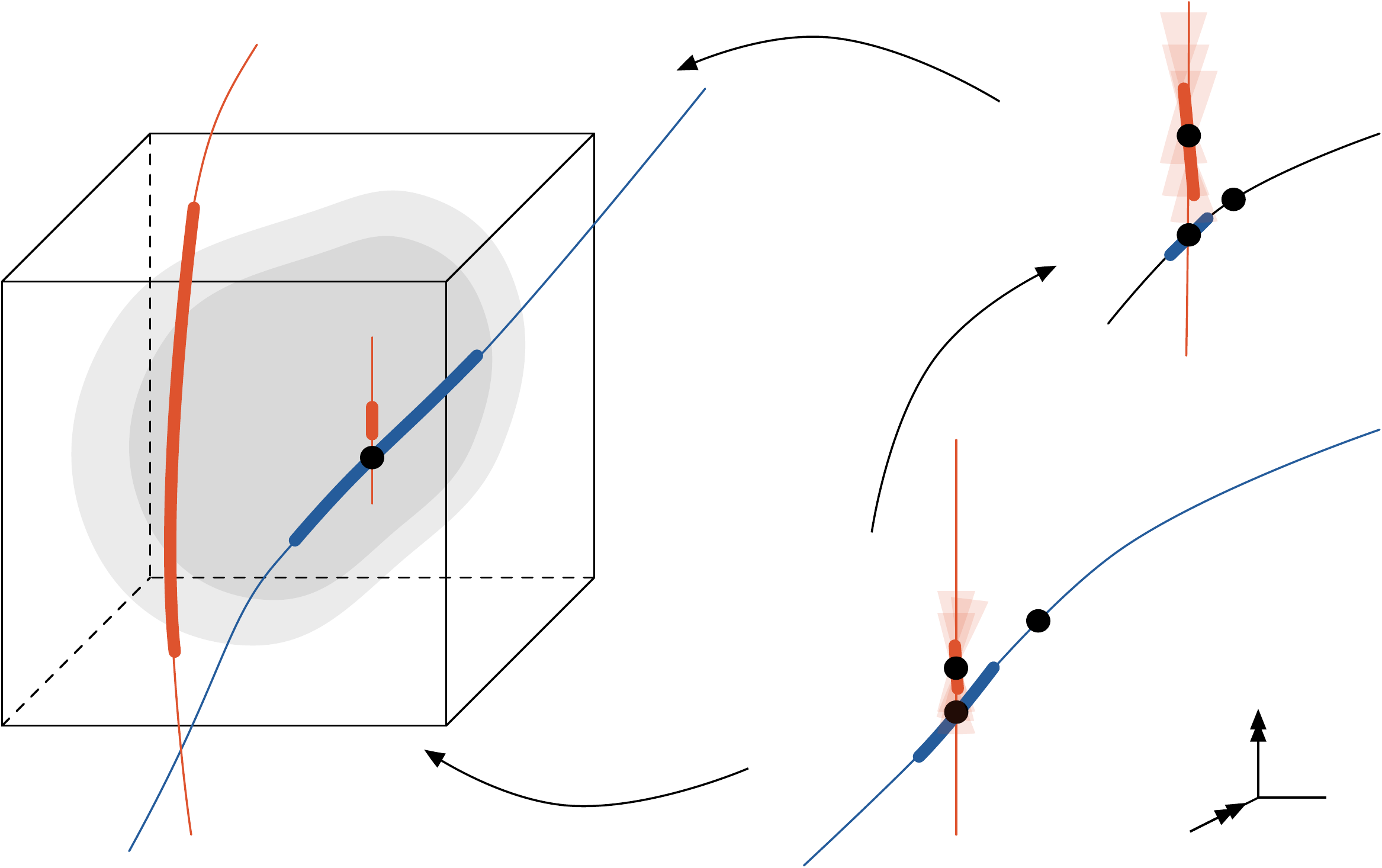}
\put(23,47){{\small{$N^+$}}}
\put(20,43){{\small{$V^+$}}}
\put(0,5){{\small{$\mathbf C^+_f$}}}
\put(64,40){{\small{$f^m$}}}	
\put(40,0){{\small{$f^{-t_{\rm con}}$}}}	
\put(56,55){{\small{$f^{t_{\rm con}}$}}}	
 \put(77,16){{\small{$x_i$}}}
 \put(85,30){{\small{$\cF^\ss(x_i)$}}}
 \put(71,10){{\small{$z_i$}}}
 \put(70.5,16){{\small{$y_i$}}}
 \put(61,8){{\small{\textcolor{blue}{$\Delta^\ss$}}}}
 \put(40,13){{\small{{$\widehat\cW_{z_i}(z_i)\supset f^{-m}(\Delta_0^\uu)$}}}}
 \put(87,52){{\small{$f^m(y_i)$}}}
 \put(87,43){{\small{$f^m(z_i)$}}}
 \put(91,47){{\small{$f^m(x_i)$}}}
 \put(83,0){{\small{$E^\ss$}}}
 \put(92,12){{\small{$E^\uu$}}}
 \put(97,5){{\small{$E^\c$}}}
 \end{overpic}
 \hspace{2cm}
\caption{Construction of rectangles: choosing reference points}
\label{Fig:plug}
\end{figure}

\subsubsection{Distortion estimates nearby the orbit of $y_i$}
\label{sss.centralestimates}

Recalling~\eqref{e.derivadacentral} and applying the second part of the distortion Lemma~\ref{l.strongdistortion} to $x_i$ and $z_i$, for every $\ell=0,\ldots,m$ we get
$$
	 K_0^{-1}e^{\ell (-\varepsilon_E-\eps_D^\ss)}\leq
	\lVert  Df^\ell |_{E^\c_{z_i}} \rVert 
	\leq  K_0 e^{\ell (\varepsilon_E+\eps_D^\ss)}.
$$
Then applying the first part of Lemma~\ref{l.strongdistortion} to $z_i$ and any   $y\in \widehat \cW^\uu_{z_i} (z_i, \delta/4)$, for every $\ell=0,\ldots,m$ we get
$$
	 K_0^{-1}e^{\ell (-\varepsilon_E-\eps_D^\ss-\eps_D^\uu)}\leq
	\lVert  Df^\ell |_{E^\c_{y}} \rVert 
	\leq  K_0 e^{\ell (\varepsilon_E+\eps_D^\ss+\eps_D^\uu)}.
$$
With~\eqref{eq:choisecone},  for every $\ell=0,\ldots,m$ we have 
\begin{equation}\label{eq:afomurla}
	 K_0^{-1}e^{\ell (-2\varepsilon_E-\eps_D^\ss-\eps_D^\uu)}\leq
	\lVert  Df^\ell |_{T_{y}\widehat\cW^\c_{z_i}} \rVert 
	\leq  K_0 e^{\ell (2\varepsilon_E+\eps_D^\ss+\eps_D^\uu)}.
\end{equation}
We now apply Proposition~\ref{p.distortiondgr} to $\vartheta$ and $\varepsilon_D$ with $\delta_0$ as in~\eqref{eq:choosedelta0} satisfying $\Mod_\vartheta(\delta_0)
	\le \varepsilon_D$ and to $\varepsilon=2\varepsilon_E+\eps_D^\ss+\eps_D^\uu$  
and $K=K_0$ and let
\begin{equation}\label{def:r}
	r
	\eqdef \frac12 \delta_\c K_0^{-1}e^{-m \eps_2}
			%(2\varepsilon_E+\eps_D^\ss+\eps_D^\uu+\eps_D)}
	< \frac12\delta_0 K_0^{-1}e^{-m \eps_2},
			%(2\varepsilon_E+\eps_D^\ss+\eps_D^\uu+\eps_D)},
\end{equation}
where we use that $\delta_\c< \delta_0$, see \eqref{eq:effect}.
Hence, by this proposition, together with~\eqref{eq:afomurla}, for every $y\in \widehat \cW^\uu_{z_i} (z_i, \delta/4)$ we obtain that for every $x\in\gamma\eqdef\widehat\cW^\c_{z_i}(y,2r)$ for every $\ell=0,\ldots,m$  we have 
\begin{equation}\label{eq:pascoa}
	K_0^{-1} e^{-\ell \eps_2} % (2\varepsilon_E+\eps_D^\ss+\eps_D^\uu+\eps_D)}
	\le \lVert Df^\ell|_{T_x\gamma}\rVert
	\le K_0 e^{\ell \eps_2}. %(2\varepsilon_E+\eps_D^\ss+\eps_D^\uu+\eps_D)}.
\end{equation}

\begin{remark}\label{r:maisumacentral}
Take any $y \in \widehat{\cW}^\uu_{z_i}(z_i,\delta/4)$ any $x\in \widehat \cW^\c_{z_i} (y,2r)$. Applying again  Lemma~\ref{l.strongdistortion} as above, we get the following estimate for every $z\in \cF^\ss(x,\delta/4)$
$$
K_0^{-1} e^{-\ell\eps_1}
	\le \lVert Df^\ell|_{E^\c_z}\rVert
	\le K_0 e^{\ell \eps_1},
$$
for every $\ell=0,\ldots,m$, where $\eps_1$ is as in \eqref{eq:epsilonhattilde}.
\end{remark}

\subsubsection{Fake central curves and auxiliary center-unstable rectangles}\label{sec:fakecurect}

We now study the deformation of the fake central curves under forward iterations of $f$. Consider  the central curve $\gamma_i=\widehat \cW^\c_{z_i} (y_i, r)$. Consider its image $f^m(\gamma_i)$ and denote its extreme points by $a$ and $b$. Consider the $\uu$-disks 
 \[
 	\widehat\Delta^\uu(p)\eqdef  \widehat\cW^\uu_{f^m(z_i)}(p,\delta/4), 
	\quad p\in\{a,b\}.
\]	 
Since by~\eqref{e.niteroiuff},~\eqref{def:r},~\eqref{eq:formulapetrea}, and~\eqref{eq:firstchoiceof} we have $\frac\delta2+r+\frac\delta2\le\rho_1$ these disks are indeed contained in the fake leaf $\cW^\cu_{f^m(z_i)}(f^m(z_i),\rho_1)$ and hence are well defined. 
Now take their pre-images 
\[
	\widetilde \Delta^\uu(f^{-m}(a))\eqdef f^{-m}(\widehat\Delta^\uu(a))
	\quad\text{ and }\quad
	\widetilde \Delta^\uu(f^{-m}(b))\eqdef f^{-m}(\widehat\Delta^\uu(b)).
\]	 
Recalling the choice of $\lambda_{\rm fk}>1$ in Remark~\ref{r.lambda}, these disks have diameter at most $\lambda_{\rm fk}^{-m}\delta$ and are tangent to $\cC^\uu_\vartheta$.
By our choice of $m$ in \eqref{eq:fakem}, we have $\lambda_{\rm fk}^{-m}\delta< r/6$. In this way, for every point $\widehat \Delta^\uu(f^{-m}(a))$ there is a fake central curve which ends in $\widehat\cW^\uu_{z_i}(f^{-m}(b),\delta/4)$ and for every point $\widehat \Delta^\uu(f^{-m}(b))$ there is a fake central curve which ends in $\widehat\cW^\uu_{z_i}(f^{-m}(a),\delta/4)$.
Hence, we get that for every fake central curve which starts in $\widehat \Delta^\uu(f^{-m}(a))$ and ends in $\widehat \Delta^\uu(f^{-m}(b))$ 
 is well defined and has length between $\frac12 r$ and $2r$ (compare Figure~\ref{Fig:def-L}). 
Denote by $\cL$ the set consisting of all such fake central curves. 
Taking the union of all such curves in $\cL$, we define the center-unstable rectangle $R^\cu_{i,0}$ by
\[%\begin{equation}\label{eq:provisoryrect0}
	R^\cu_{i,0}
	\eqdef \bigcup_{\gamma\in\cL}\gamma	.
\]%\end{equation}

\begin{figure}[h] 
\begin{overpic}[scale=.4, %grid, tics=10
]{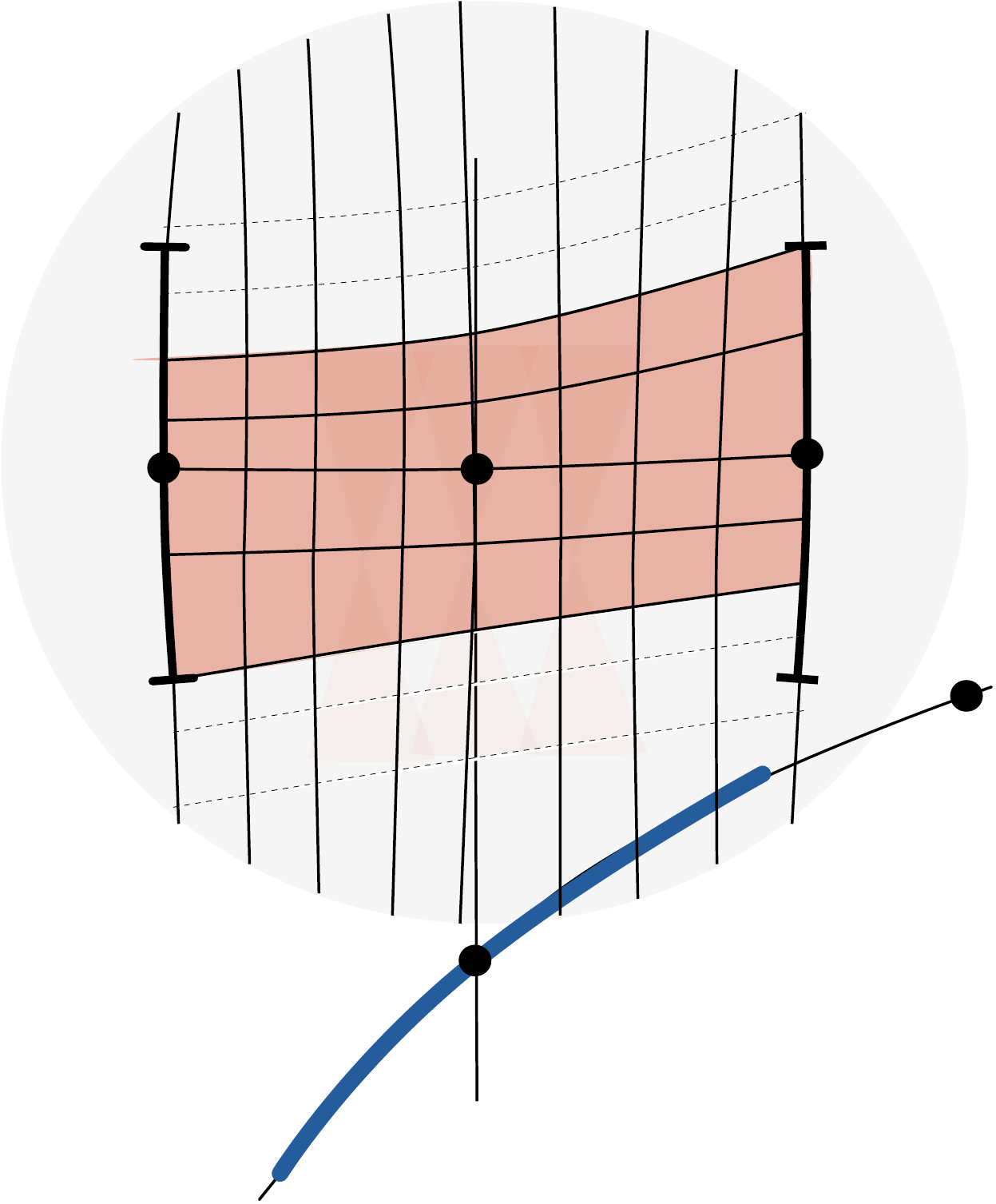}
\put(-21,49){{\small{$\widetilde\Delta^\uu(f^{-m}(a))$}}}
\put(70,49){{\small{$\widetilde\Delta^\uu(f^{-m}(b))$}}}
\put(42,18){{\small{$z_i$}}}
\put(82,38){{\small{$x_i$}}}
\put(42,57){{\small{$y_i$}}}
\put(75,86){{\small{$\widehat\cW^\cu_{z_i}(z_i,\rho)$}}}
\put(53,64){{\small{$R^\cu_{i,0}$}}}
 \end{overpic}
 \hspace{2cm}
\caption{Definition of the center-unstable rectangle $R_{i,0}^\cu\subset \widehat\cW_{z_i}(z_i,\rho)$}
\label{Fig:def-L}
\end{figure}

Finally, for each $\ell \in \{0,\dots, m\}$, we let
%\begin{equation}
\[%\label{eq:provisorycurectell}
	R^\cu_{i,\ell} 
	\eqdef f^\ell (R^\cu_{i,0}).
\]%\end{equation}
Consider now the family of curves $f^\ell(\zeta)$, $\zeta\in\cL$ and $\ell=0,\ldots,m$. We claim that all of them are fake central curves contained in $\widehat\cW^\cu_{f^\ell(z_i)}(f^\ell(z_i),\rho_1)$.
Indeed, combining Lemma~\ref{l.strongdistortion} with the estimates \eqref{eq:pascoa}, with the notation \eqref{eq:epsilonhattilde} every curve $f^\ell(\zeta)$, $\zeta\in\cL$, has length between 
\[%\begin{equation}\label{eq:quintasanta2}
	\frac12r\cdot K_0^{-1} e^{-\ell \eps_2}  
	\le \lvert f^\ell(\zeta)\rvert
	\le 2 r\cdot K_0 e^{\ell \eps_2} .
\]%\end{equation}
Hence, by the choice of $r$ in~\eqref{def:r},  we have
\begin{equation}
\label{eq:estimatedeltac}
	\frac14\delta_\c K_0^{-2}
	e^{-m\eps_2}	
	e^{-\ell\eps_2}
	\le \lvert f^\ell(\zeta)\rvert
	\le  \delta_\c
	e^{-m\eps_2}e^{\ell\eps_2}
	\le \delta_\c.
\end{equation}
Now recall that $\delta_\c<\delta_0<\rho_1$, see~\eqref{eq:choosedelta0} and~\eqref{eq:effect}. Arguing exactly as above we get the claimed property.

%\label{eq:estimatedeltac}
%	\frac12\delta_\c
%	e^{-2m(2\varepsilon_E+ \varepsilon_D^\ss+\varepsilon_D^\uu+\varepsilon_D)}
%	\le \lvert f^m(\zeta)\rvert
%	\le  \delta_\c.
%\end{equation}
Arguing as above, but now interchanging the roles of the central and unstable directions, we have that for every  $p\in R^\cu_{i,m}$ the intersection 
\[
	\Delta^\uu(p)
	\eqdef R^\cu_{i,m} \cap  \widehat\cW^\uu_{f^m(z_i)}(p,\delta)
\]	 
contains a disk of inner diameter at least $\delta/4$. Therefore, by the property of the connecting times of the skeleton in Section~\ref{skeleleton}, for every $t\ge t_{\rm con}$ the set $f^t(\Delta^\uu(p))$ contains a $\uu$-disk in $V^{+\uu}_{\widehat{\mathbf C}^+_f}$ which is complete relative to $\widehat{\mathbf C}^+_f$.

\begin{remark}[Summary of the above construction]\label{rem:summary}
$\,$
\begin{itemize}
\item[(i)] For every  $p\in  f^m(\gamma_i)$ the intersection $R^\cu_{i,m} \cap  \widehat\cW^\uu_{f^m(z_i)}(p,\rho)$ contains a disk of inner diameter at least $\delta/4$.
\item[(ii)] For every  $p\in   R^\cu_{i,m} $ the curve $\alpha= R^\cu_{i,m} \cap\widehat\cW^\c_{f^m(z_i)}(p)$ has length satisfying
\begin{equation*}
%\label{e.eqcafundao}
%	\frac12\delta_\c
%	e^{-m 2\eps_2}
%	\le \lvert \alpha\rvert
%	\le  \delta_\c.
	\frac14\delta_\c K_0^{-2}
	e^{-m 2\eps_1}
	\le \lvert \alpha\rvert
	\le  \delta_\c.
\end{equation*}
\item[(iii)] The set $f^{t_{\rm con}}(\Delta^\uu (y_i))$ contains a $\uu$-disk $V^{+\uu}_{\widehat{\mathbf C}^+_f}$ which is complete relative to $\widehat{\mathbf C}^+_f$.
Observing that, for every $p\in R^\cu_{i,m}$,  the Hausdorff distance between $\Delta^\uu (p) $ and $\Delta^\uu (y_i)$ is at most $\delta_\c$ and invoking \eqref{eq:effect}, 
we have that $f^{t_{\rm con}}(\Delta^\uu (p) )$ and $f^{t_{\rm con}}(\Delta^\uu (y_i))$ are at Hausdorff distance which is smaller than the safety constant $\tau$.  Hence, by Remark~\ref{reml.minimalitysizesafety} the set $f^{t_{\rm con}} (\Delta^\uu (p))$ contains a $\uu$-disk  $\widetilde\Delta^\uu(p)$ (complete relative to ${\mathbf C}^+_f$) which is contained in $N^{+\uu}$.
Define the following set 
\[
	S_i
	\eqdef \bigcup_{p\in \gamma_i}\widetilde\Delta^\uu(p)
	\subset N^{+\uu}
	\subset \mathbf C^+_f.
\]
\item[(iv)] By construction and the forward-invariance of $\cC^\cu$ by domination, the set $S_i$ is tangent to $\cC^\cu$.  Note also that it is contained in a smooth surface, by construction. Note also that it is a $\u$-strip. 
\item[(v)] Define
\[
	\widehat R^\cu_{i,m}
	\eqdef f^{-t_{\rm con}}(S_i)
	\subset R^\cu_{i,m}.
\]
\end{itemize}
\end{remark}

\begin{lemma}\label{lem:striplength}
	For every $i\in \{1, \dots, \card \mathfrak X\}$ the (inner) width of $S_i$ can be estimated by  
\[
	\mathfrak m^{t_{\rm con}}\frac18\delta_\c K_0^{-2}
	e^{-m2\eps_1}
	\le w(S_i)
	\le 2\delta_\c \mathfrak M^{t_{\rm con}},
\]	
where $\mathfrak m$ and $\mathfrak M$ are defined in \eqref{eq:defmdf}.
\end{lemma}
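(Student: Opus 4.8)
The plan is to deduce both inequalities from the identity $S_i=f^{t_{\rm con}}(\widehat R^\cu_{i,m})$ of Remark~\ref{rem:summary}(v), the inclusion $\widehat R^\cu_{i,m}\subset R^\cu_{i,m}$, the length estimate for the central curves of $R^\cu_{i,m}$ in Remark~\ref{rem:summary}(ii), and the crude uniform bounds $\mathfrak m,\mathfrak M$ of~\eqref{eq:defmdf}. The latter give, for every $C^1$ curve $\gamma$, that $\mathfrak m^{t_{\rm con}}\lvert\gamma\rvert\le\lvert f^{t_{\rm con}}(\gamma)\rvert\le\mathfrak M^{t_{\rm con}}\lvert\gamma\rvert$ and hence also $\mathfrak M^{-t_{\rm con}}\lvert\gamma\rvert\le\lvert f^{-t_{\rm con}}(\gamma)\rvert\le\mathfrak m^{-t_{\rm con}}\lvert\gamma\rvert$; here $t_{\rm con}$ is a fixed number, independent of $i$ and of $m$, so the two correction factors are universal.

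For the \emph{upper bound}, the set $\widehat R^\cu_{i,m}\subset R^\cu_{i,m}$ inherits a foliation by central curves of length at most $\delta_\c$ by Remark~\ref{rem:summary}(ii). Since $t_{\rm con}$ is a \emph{bounded} number of iterates and the cone opening $\vartheta$ was taken small, pushing this foliation forward by $f^{t_{\rm con}}$ produces a central foliation of the $\u$-strip $S_i=f^{t_{\rm con}}(\widehat R^\cu_{i,m})$, each of whose leaves is the image of a curve of length at most $\delta_\c$ and therefore has length at most $\mathfrak M^{t_{\rm con}}\delta_\c<2\delta_\c\,\mathfrak M^{t_{\rm con}}$. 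Taking the infimum of lengths over this foliation gives $w(S_i)\le 2\delta_\c\,\mathfrak M^{t_{\rm con}}$.

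For the \emph{lower bound}, let $\alpha$ be any $(S_i,\c)$-complete curve; by definition it joins the two components of the strong unstable boundary of $S_i$, so $f^{-t_{\rm con}}(\alpha)\subset\widehat R^\cu_{i,m}$ joins the corresponding two ``central ends'' of $\widehat R^\cu_{i,m}$. By construction $\widehat R^\cu_{i,m}=f^{-t_{\rm con}}(S_i)$ is the union, over the points $p$ of the central curve $f^m(\gamma_i)\subset R^\cu_{i,m}$, of the $\uu$-disks $f^{-t_{\rm con}}(\widetilde\Delta^\uu(p))$ stacked transversally along $f^m(\gamma_i)$; hence the central projection (along these $\uu$-disks) of any curve of $\widehat R^\cu_{i,m}$ joining its two central ends covers essentially all of $f^m(\gamma_i)$, whose length is at least $\tfrac14\delta_\c K_0^{-2}e^{-m2\eps_1}$ by Remark~\ref{rem:summary}(ii). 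Absorbing into a factor $\tfrac12$ the opening of the central cone field and the fact that the endpoints of $f^m(\gamma_i)$ lie inside those $\uu$-disks, we get $\lvert f^{-t_{\rm con}}(\alpha)\rvert\ge\tfrac18\delta_\c K_0^{-2}e^{-m2\eps_1}$, and consequently $\lvert\alpha\rvert\ge\mathfrak m^{t_{\rm con}}\lvert f^{-t_{\rm con}}(\alpha)\rvert\ge\mathfrak m^{t_{\rm con}}\tfrac18\delta_\c K_0^{-2}e^{-m2\eps_1}$. Taking the infimum over all $(S_i,\c)$-complete curves yields the claimed lower bound for $w(S_i)$.

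The step I expect to cost the most work is the geometric claim used in the lower bound: that a curve inside the center-unstable rectangle $\widehat R^\cu_{i,m}$ joining its two strong unstable ends cannot be much shorter than the central fibre $f^m(\gamma_i)$ over which $\widehat R^\cu_{i,m}$ is built. This rests on $\widehat R^\cu_{i,m}$ sitting inside a smooth surface tangent to $\cC^\cu$ and carrying two transverse foliations---by $\uu$-disks and by central curves---so that the projection along the $\uu$-disks onto a central leaf is Lipschitz with constant close to $1$; the same observation is also what justifies the tangency bookkeeping in the upper bound. Everything else reduces to the elementary manipulations of $\mathfrak m$, $\mathfrak M$, $\delta_\c$ and $K_0$ displayed above.
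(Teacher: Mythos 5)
Your proof is correct and follows essentially the same route as the paper's: pull $S_i$ back by $f^{t_{\rm con}}$ to $\widehat R^\cu_{i,m}$, compare complete central curves there with the central fibres whose lengths are estimated in Remark~\ref{rem:summary}(ii) (absorbing the opening of the central cone into the factors $\tfrac12$ and $2$), and convert with the crude bounds $\mathfrak m^{t_{\rm con}}$, $\mathfrak M^{t_{\rm con}}$. The only cosmetic difference is in the upper bound, where the paper pulls back an arbitrary $(S_i,\c)$-complete curve and bounds its preimage by $2\delta_\c$ rather than pushing forward the central foliation of $\widehat R^\cu_{i,m}$ --- which sidesteps having to check that the $f^{t_{\rm con}}$-images of those leaves remain tangent to $\cC^\c_\vartheta$ (that cone is not forward invariant) --- but the computation is identical.
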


\begin{proof}
Take $\alpha\in S_i$ a $(S_i,\c)$-complete curve (that is, in particular, tangent to $\cC^\c_\vartheta$). Hence, $\alpha$ is also tangent to $\cC^\cs_\vartheta$. By backward-invariance of $\cC^\cs_\vartheta$, $f^{-{t_{\rm con}}}(\alpha)$ is tangent to $\cC^\cs_\vartheta$. As, by construction, $\alpha$ is a subset of the fake invariant foliation $\widehat\cW^\cu$ which in turn is tangent to $\cC^\cu_\vartheta$, we obtain that $f^{-{t_{\rm con}}}(\alpha)$ is tangent to $\cC^\cu_\vartheta$ and consequently tangent to $\cC^\c_\vartheta$. 

Now, recall that $f^{-t_{\rm con}}(\alpha)$ is contained in $\widehat R^\cu_{i,m}$ which is in turn foliated by central fake curves tangent to $\cC^\c_\vartheta$ whose lengths are estimated in item (ii). To estimate the length of $f^{-t_{\rm con}}(\alpha)$ we only need to take into consideration the opening of the central cone field. Arguing as above, we get  
\[
	\frac18\delta_\c K_0^{-2}
	e^{-m2\eps_1}
	\le \lvert f^{-{t_{\rm con}}}(\alpha)\rvert
	\le 2\delta_\c.
\]
The lemma now follows from the definitions of $\mathfrak m$ and $\mathfrak M$ in~\eqref{eq:defmdf}.
\end{proof}

\subsubsection{Blending}\label{sss.blending}
We shall apply now the controlled expanding central covering property (Proposition~\ref{p.c.coveringproperty}) to the $\u$-strip $S_i$ which gets expanded in the central direction to obtain a  $\c$-complete $\u$-strip  of the blender-horsehoe. An important point will be to estimate both from below and above of the time needed to get this property.

 We have the following corollary from Lemma~\ref{lem:striplength} and Proposition~\ref{p.c.coveringproperty}. For the definition of the expansion constant $\lambda_{\mathrm{bh}}$ in the blender-horseshoe  see \eqref{e.nexpansionCu}.

\begin{corollary}\label{corl.minmalwidth}
There is universal constant $C>0$, such that for every $i\in\{1,\ldots,\card \mathfrak X\}$ and for every $\ell\ge \ell_{m,\eps_1}$, where
\[
	 \ell_{m,\eps_1}
	 \eqdef 
	 \left\lceil
	 	\frac{\lvert t_{\rm con}\log\mathfrak m
				-\log 8
				 +  \log\delta_\c  
				-2\log K_0
	 			-m2\eps_1\rvert}
			{\log\lambda_{\rm bh}}
						+C\right\rceil+1 
\]	
there is a subset $ S_i'\subset S_i$ such that
\begin{itemize}
\item $f^k( S_i')$ is contained in $\mathbf C^+_f$ for every $k\in\{0,\ldots,\ell\}$ and
\item $f^\ell( S_i')$ is a $\c$-complete $\u$-strip.
\end{itemize}
\end{corollary}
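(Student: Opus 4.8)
The plan is to simply combine the width estimate for the strip $S_i$ from Lemma~\ref{lem:striplength} with the controlled expanding forward central covering property of Proposition~\ref{p.c.coveringproperty}, applied to the blender-horseshoe $(\Lambda_f^+,\mathbf C_f^+,f^{n_+})$ (recall we assume $n_+=1$). First I would note that, by construction (Remark~\ref{rem:summary}(iii)--(iv)), each $S_i$ is a $\u$-strip contained in $N^{+\uu}\subset\mathbf C^+_f$ which is in-between the $\uu$-walls, hence in-between in the sense required by Proposition~\ref{p.c.coveringproperty}. Thus that proposition applies to $S=S_i$ and produces, for every $\ell\ge\ell(S_i)$, a subset $S_i'\subset S_i$ with $f^k(S_i')\subset\mathbf C^+_f$ for all $k\in\{0,\ldots,\ell\}$ and with $f^\ell(S_i')$ a $\c$-complete $\u$-strip. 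This already gives both bullet points; the only remaining task is to bound $\ell(S_i)$ uniformly in $i$ by the displayed quantity $\ell_{m,\eps_1}$.

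For that, recall from Proposition~\ref{p.c.coveringproperty} that
\[
	\ell(S_i)
	=\left\lceil\frac{\lvert\log w(S_i)\rvert}{\log\lambda_{\rm bh}}+C\right\rceil+1
\]
for a universal constant $C>0$. By Lemma~\ref{lem:striplength},
\[
	\mathfrak m^{t_{\rm con}}\tfrac18\delta_\c K_0^{-2}e^{-m2\eps_1}
	\le w(S_i)
	\le 2\delta_\c\mathfrak M^{t_{\rm con}},
\]
and since both bounds are independent of $i$, the function $\ell(S_i)$ is bounded above (again uniformly in $i$) by the value obtained by replacing $w(S_i)$ with the lower bound, because $x\mapsto\lvert\log x\rvert$ is decreasing on $(0,1)$ and the relevant widths are small (by \eqref{eq:effect}, $\delta_\c<1$ and $t_{\rm con}\ge1$, so the lower bound is $<1$; if necessary one absorbs the case where the upper bound exceeds $1$ into the universal constant). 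Taking logarithms of the lower bound gives
\[
	\lvert\log w(S_i)\rvert
	\le \bigl\lvert\, t_{\rm con}\log\mathfrak m-\log 8+\log\delta_\c-2\log K_0-m2\eps_1\,\bigr\rvert,
\]
so that $\ell(S_i)\le\ell_{m,\eps_1}$ with $\ell_{m,\eps_1}$ as in the statement, possibly after enlarging the universal constant $C$ to swallow the difference between $\lceil a+C\rceil+1$ and $\lceil b+C\rceil+1$ when $a\le b$ (which differ by at most a bounded amount here, in fact $\lceil a+C\rceil+1\le\lceil b+C\rceil+1$ directly since $a\le b$). For every $\ell\ge\ell_{m,\eps_1}\ge\ell(S_i)$ the conclusion of Proposition~\ref{p.c.coveringproperty} then yields the desired $S_i'$.

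The argument is essentially bookkeeping: there is no real obstacle, only the mild care needed to make sure that the $\lceil\cdot\rceil$ and the sign-vs.-absolute-value in the exponents are handled so that a single universal constant $C$ works for all $i$ simultaneously; this is exactly why the statement writes $\lvert\cdots\rvert$ in the numerator and allows a universal $C$. One should double-check that $t_{\rm con}$, $\delta_\c$, $K_0$, $\eps_1$, and $m$ are all already fixed (independently of $i$) in Sections~\ref{ss:prelim}--\ref{sss.sizefakecentral}, which they are, so that $\ell_{m,\eps_1}$ is a genuine constant of the construction.
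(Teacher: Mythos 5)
Your proposal is correct and follows exactly the route the paper intends: the paper states this as an immediate consequence of Lemma~\ref{lem:striplength} and Proposition~\ref{p.c.coveringproperty}, and your argument supplies precisely the bookkeeping (checking $S_i$ is an in-between $\u$-strip via Remark~\ref{rem:summary}, then bounding $\ell(S_i)$ by $\ell_{m,\eps_1}$ using the lower width bound and the monotonicity of $x\mapsto\lvert\log x\rvert$ on $(0,1)$) that the authors leave implicit.
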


\subsubsection{Construction of the full rectangles}\label{sss.finalconstruction}

First, for each $\u$-strip $S_i$, we consider the corresponding set $S_i'\subset S_i$  given in Corollary~\ref{corl.minmalwidth} and define
$$
	\widetilde R_{i,m}^\cu 
	\eqdef  f^{- t_{\rm con}}(S_i')\subset \widehat R_{i,m}^\cu.
$$
We now consider the saturation by strong stable leaves of size $\delta$ of the $\cu$-rectangles above,
\[%\begin{equation}\label{eq:provisorycur0}
	R_{i,0} 
	\eqdef \bigcup_{x\in f^{-m}(\widetilde R^\cu_{i,m})} \cF^\ss (x,\delta/4)
	\subset
	R'_{i,0} 
	\eqdef \bigcup_{x\in f^{-m}(\widehat R^\cu_{i,m})} \cF^\ss (x,\delta/4).
\]%\end{equation}

\begin{remark}[The full rectangles $C_i$]\label{r.thecubesCi}
Consider the connected component of $f^{-t_{\rm con}}(R'_{i,0})\cap \mathbf C^+_f$ which contains 
$f^{-t_{\rm con}} (y_i)$ and denote it by $C_i'$ (note that in passing to the subset $\widehat R^\cu_{i,m}$ we possibly excluded the image point of $y_i$ which before served as a ``reference point'' of our construction, this is the only reason that we  consider this auxiliary set $C_i'$).  This set is contained in
$N^{+\ss}$ and is the union of  $\ss$-complete disks.
We denote by $C_i$ the connected component of $f^{-t_{\rm con}}(R_{i,0})\cap \mathbf C^+_f$ contained in $C_i'$. The full rectangle $C_i$ can be also defined as follows
then
$$
	C_i=\bigcup_{x\in R_i^\cu} \cF^\ss_{C^+_f}(x),
	\quad\text{ where }\quad
	R_i^{\cu}\eqdef f^{-(t_{\rm con}+m)}(\widetilde R^\cu_{i,m}),
$$ 
compare Figure~\ref{Fig:beleza}.
Now, recalling the definition of $\ell_{m,\eps_1}$ in Corollary~\ref{corl.minmalwidth}, let 
\begin{equation}\label{def:Nhatepsm}
	N=N_{m,\eps_1}
	\eqdef t_{\rm con}+m+t_{\rm con}+\ell_{m,\eps_1}
\end{equation}
and observe that, by its very construction, $f^N(R^\cu_i)$ is a $\c$-complete $\u$-strip.
\end{remark}

\begin{figure}[h] 
\begin{overpic}[scale=.5, %grid, tics=10
]{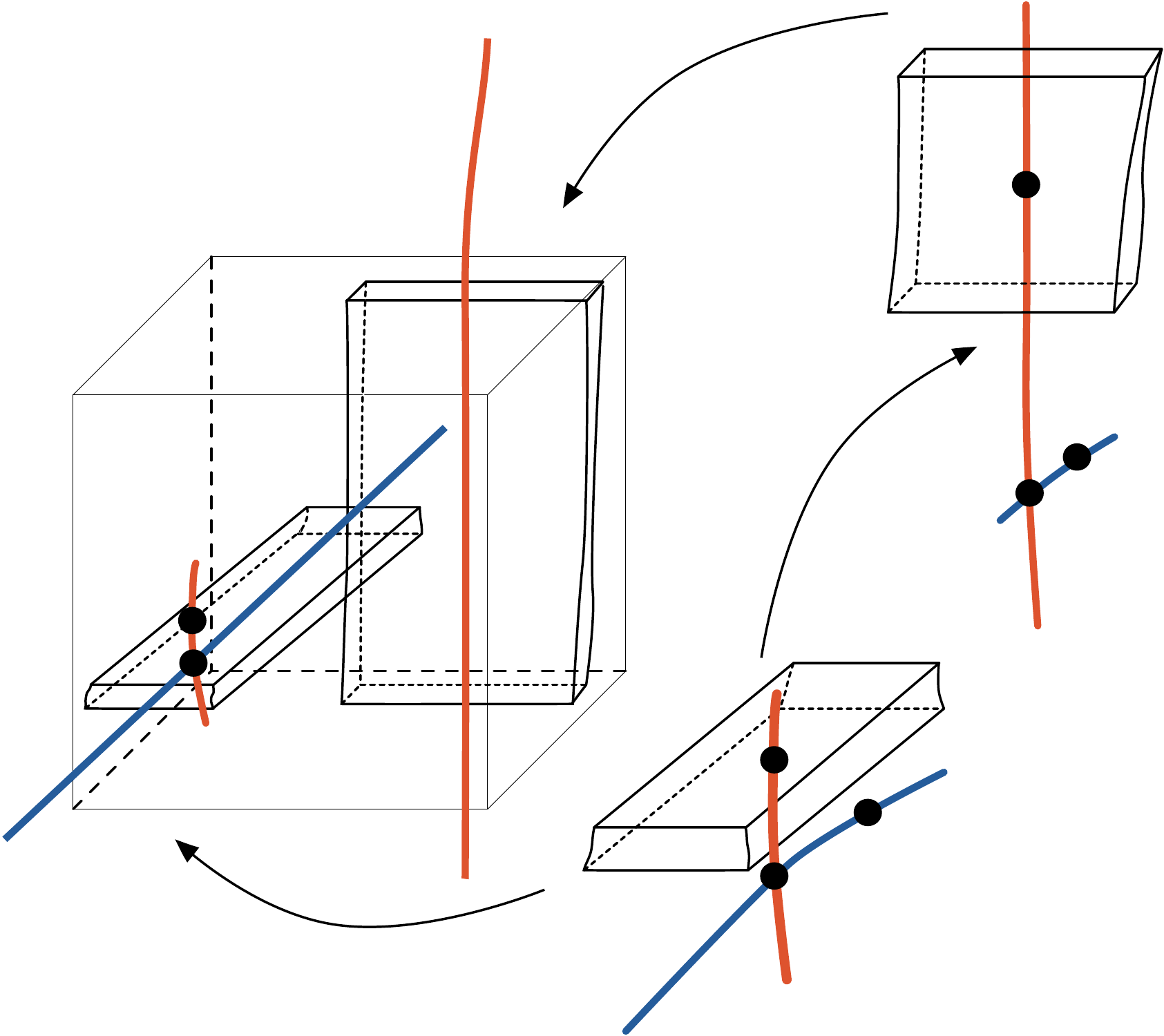}
\put(69,23){{\small{$y_i$}}}
\put(69,12){{\small{$z_i$}}}
\put(77,17){{\small{$x_i$}}}
\put(69,40){{\small{$f^m$}}}
\put(15,5){{\small{$f^{-t_{\rm con}}$}}}
\put(60,80){{\small{$f^{t_{\rm con}}$}}}
\put(9,36){{\small{$C_i$}}}
\put(83,21){{\small{$\cF^\ss(x_i)$}}}
\put(69,5){{\small{$\widehat\cW^\uu_{z_i}(z_i)$}}}
\put(20,70){{\small{$\mathbf{C}^+_f$}}}
 \end{overpic}
 \hspace{2cm}
\caption{Construction of the full rectangles $C_i$}
\label{Fig:beleza}
\end{figure}

For further reference, observe that the following property follows immediately from the definition of $\delta_\c$  in \eqref{eq:formulapetrea}.
\begin{lemma}\label{lem:katrin}
	$\ell_{m,\eps_1}\log\lambda_{\rm bh}\sim m(\sqrt{\eps_1}+2\eps_1)$.
\end{lemma}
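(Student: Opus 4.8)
The plan is to substitute the explicit value of $\delta_\c$ into the definition of $\ell_{m,\eps_1}$ and then keep track of which terms grow with $m$ and which stay bounded. Recall from~\eqref{eq:formulapetrea} that $\log\delta_\c=-m\sqrt{\eps_1}$. Plugging this into the formula for $\ell_{m,\eps_1}$ in Corollary~\ref{corl.minmalwidth}, the argument of the absolute value becomes
\[
	t_{\rm con}\log\mathfrak m-\log 8-2\log K_0-m\sqrt{\eps_1}-2m\eps_1
	= -m(\sqrt{\eps_1}+2\eps_1)+A,
\]
where $A\eqdef t_{\rm con}\log\mathfrak m-\log 8-2\log K_0$. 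The key observation, which I would emphasize, is that all of $t_{\rm con}$, $\mathfrak m$, $K_0$, and $\eps_1$ were fixed \emph{before} the choice of $m$ (see Sections~\ref{ss:prelim} and~\ref{skeleleton}); in particular $A$ is a constant independent of $m$, and $\sqrt{\eps_1}+2\eps_1>0$.

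Hence, for all sufficiently large $m$ the quantity $-m(\sqrt{\eps_1}+2\eps_1)+A$ is negative, so its absolute value equals $m(\sqrt{\eps_1}+2\eps_1)-A$. Dividing by $\log\lambda_{\rm bh}>0$ and noting that the ceiling function together with the additive constants $C$ and $1$ in the definition of $\ell_{m,\eps_1}$ alter the value by an amount bounded uniformly in $m$, I obtain
\[
	\ell_{m,\eps_1}
	= \frac{m(\sqrt{\eps_1}+2\eps_1)}{\log\lambda_{\rm bh}} + O(1)
	\qquad\text{as }m\to\infty.
\]
Multiplying by $\log\lambda_{\rm bh}$ gives $\ell_{m,\eps_1}\log\lambda_{\rm bh}=m(\sqrt{\eps_1}+2\eps_1)+O(1)$, and since $m(\sqrt{\eps_1}+2\eps_1)\to\infty$ (because $\eps_1>0$ is fixed) the ratio $\ell_{m,\eps_1}\log\lambda_{\rm bh}\big/\big(m(\sqrt{\eps_1}+2\eps_1)\big)$ tends to $1$, which is precisely the assertion.

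I do not expect a genuine obstacle here: the only thing requiring care is the bookkeeping — confirming that every constant other than the two terms $-m\sqrt{\eps_1}$ and $-2m\eps_1$ is independent of $m$, and that the $O(1)$ contributions are negligible against the linear-in-$m$ main term. Once this is clear, the statement follows immediately from the definition of $\sim$ given in the footnote.
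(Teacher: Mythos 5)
Your proof is correct and follows exactly the route the paper intends: the paper simply asserts that the lemma ``follows immediately from the definition of $\delta_\c$ in \eqref{eq:formulapetrea}'', and your computation is precisely the bookkeeping behind that assertion --- substituting $\log\delta_\c=-m\sqrt{\eps_1}$ into the formula of Corollary~\ref{corl.minmalwidth}, noting that $t_{\rm con}$, $\mathfrak m$, $K_0$, $C$ and $\eps_1$ are all fixed before $m$, and absorbing the ceiling and additive constants into an $O(1)$ term. Nothing further is needed.
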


\subsubsection{Separation and disjointness of the full rectangles}
\label{sss.separation}

\begin{lemma}[Disjointness]\label{l.bowenballs}
	The sets $C_i$ are pairwise disjoint.
\end{lemma}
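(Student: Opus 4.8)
The plan is to exploit the $(m,\varepsilon)$-separation of the skeleton $\mathfrak X=\{x_i\}$. I will show that the forward orbit of any point of $C_i$ shadows the segment $x_i,f(x_i),\dots,f^m(x_i)$, after a fixed lag $t_{\rm con}$ and within an error of order $\delta$, and then derive a contradiction from $C_i\cap C_j\ne\emptyset$ using $\delta<\varepsilon/15$ from~\eqref{e.niteroiuff}. For the reduction: by Remark~\ref{r.thecubesCi}, $C_i$ is a connected component of $f^{-t_{\rm con}}(R_{i,0})\cap\mathbf C_f^+$, so $f^{t_{\rm con}}(C_i)\subset R_{i,0}$ and hence $f^{t_{\rm con}+\ell}(w)\in f^\ell(R_{i,0})$ for all $w\in C_i$ and $\ell\in\{0,\dots,m\}$. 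Since $R_{i,0}\subset R'_{i,0}\subset\bigcup_{x\in R^\cu_{i,0}}\cF^\ss(x,\delta/4)$ (using $f^{-m}(\widehat R^\cu_{i,m})\subset f^{-m}(R^\cu_{i,m})=R^\cu_{i,0}$) and, by Remark~\ref{r.lambda}, $f$ maps a strong stable ball of radius $\delta/4$ into one of radius $\le\delta/4$, we get $f^\ell(R_{i,0})\subset\bigcup_{x'\in R^\cu_{i,\ell}}\cF^\ss(x',\delta/4)$. So it suffices to prove that $R^\cu_{i,\ell}$ is contained in a ball $B(f^\ell(z_i),C_0\delta)$ with $C_0$ a universal constant, for all $\ell\in\{0,\dots,m\}$.

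\emph{The key estimate.} By the construction in Section~\ref{sec:fakecurect}, $R^\cu_{i,\ell}$ is the union of the fake central curves $f^\ell(\zeta)$, $\zeta\in\cL$, and its diameter is controlled uniformly in $\ell\in\{0,\dots,m\}$ by three facts: (i) each such curve has length $\le\delta_\c\le\delta$, by~\eqref{eq:estimatedeltac}; (ii) the $\uu$-disks $f^\ell(\widetilde\Delta^\uu(f^{-m}(a)))=f^{\ell-m}(\widehat\Delta^\uu(a))$ bounding these curves are, since $\ell-m\le0$, backward iterates of the $\delta/4$-disks $\widehat\Delta^\uu(a),\widehat\Delta^\uu(b)$, hence fake $\uu$-disks of size at most $\delta/2$, by the local invariance of the fake foliations and the contraction in Remark~\ref{r.lambda}; (iii) the core curve $f^\ell(\gamma_i)$ (which is one of the curves in $\cL$) has length $\le\delta_\c$ and passes through $f^\ell(y_i)$, which by Lemma~\ref{l.choosingpoints}(4) belongs to $\widehat\cW^\uu_{f^\ell(z_i)}(f^\ell(z_i),\delta/4)$. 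Chaining (i)--(iii) through the foliated structure of $R^\cu_{i,\ell}$ gives the desired $C_0\delta$-bound.

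Granting the estimate, $d(f^{t_{\rm con}+\ell}(w),f^\ell(z_i))\le(C_0+\tfrac14)\delta$ for all $w\in C_i$ and $\ell\in\{0,\dots,m\}$; and since $z_i\in\cF^\ss(x_i,\delta/4)$ by Lemma~\ref{l.choosingpoints}(3) and strong stable leaves are forward contracted, $d(f^\ell(z_i),f^\ell(x_i))\le\delta/4$, so $d(f^{t_{\rm con}+\ell}(w),f^\ell(x_i))\le(C_0+\tfrac12)\delta$. If $w\in C_i\cap C_j$ with $i\ne j$, the triangle inequality would yield $d(f^\ell(x_i),f^\ell(x_j))\le(2C_0+1)\delta<\varepsilon$ for every $\ell\in\{0,\dots,m-1\}$ by~\eqref{e.niteroiuff} (as $C_0<7$), contradicting that $\mathfrak X$ is $(m,\varepsilon)$-separated. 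Hence the $C_i$ are pairwise disjoint. The one delicate point is the key estimate: the full rectangles carry simultaneously the expanding fake $\uu$-direction, the almost neutral fake $\c$-direction and the contracting strong stable direction, and one must check they never leave an $O(\delta)$-tube around the shadowed orbit over the whole window $\{0,\dots,m\}$. The expanding direction is kept under control precisely because on this window one only sees \emph{backward} iterates of the $\delta/4$-disks fixed at time $m$, and the central direction is controlled by the distortion estimate behind~\eqref{eq:estimatedeltac}.
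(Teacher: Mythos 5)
Your proposal is correct and follows essentially the same route as the paper's proof: decompose the displacement of a point of $f^{t_{\rm con}}(C_i)$ from $f^\ell(x_i)$ into a strong stable part (contracted forward), a fake unstable part (controlled because on $\{0,\dots,m\}$ one only sees backward iterates of the $\delta/4$-disks fixed at time $m$), and a fake central part (bounded by $\delta_\c$ via~\eqref{eq:estimatedeltac}), then invoke the $(m,\varepsilon)$-separation of the skeleton together with~\eqref{e.niteroiuff}. The paper phrases this as an $(m,\varepsilon/3)$-separation claim using local product structure coordinates, but the estimates and the logic are the same as yours.
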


\begin{proof}
	Indeed, we will prove a slightly stronger fact that any pair of points $v_i\in f^{t_{\rm con}}(C_i)$ and $v_j\in f^{t_{\rm con}}(C_j)$, $i\ne j$, is $(m,\varepsilon/3)$-separated.

Recall from the skeleton properties in Section~\ref{skeleleton} that $\{x_i\}$ is $(m,\varepsilon)$-separated.
Recall also the choices of $y_i$ and $z_i$ in Lemma~\ref{l.choosingpoints}. Observe that, by uniform contraction on strong stable manifolds  and by items 3. and 4. in Lemma~\ref{l.choosingpoints}, by we obtain that for every $\ell=0,\ldots,m$
\[
	d(f^\ell(y_i),f^\ell(x_i))
	\le d(f^\ell(y_i),f^\ell(z_i))+d(f^\ell(z_i),f^\ell(x_i))
	\le \frac\delta4+\frac\delta4
	\le 2\frac{\varepsilon}{15},
\]
where the latter follows from~\eqref{e.niteroiuff}.

\begin{claim}
For all $v\in f^{t_{\rm con}}(C_i)$,  $\max_{\ell=0,\ldots,m-1}d(f^\ell(x_i),f^\ell(v)) <{\eps}/{3}$.
\end{claim}

Since the points $\{x_i\}$ are $(m,\epsilon)$-separated this  claim implies the lemma.

\begin{proof}
Take any $v\in f^{t_{\rm con}}(C_i)$. Using the ``local product structure coordinates'' provided by the strong stable foliation and the fake invariant foliation $\widehat\cW^\cu$, we can find points  $v'\in \cF^\ss(v,\delta/4)\cap R_{i,0}$ and $v''\in\widehat\cW^\uu_{z_i}(v',\delta/4)$. Then, arguing as before, we have  for every $\ell=0,\ldots,m$
\[
	d(f^\ell(v),f^\ell(v''))
	\le 2\frac{\varepsilon}{15}.
\]
Finally, by the choice of the central fake curves of length at most $r$ in~\eqref{eq:estimatedeltac}, we obtain for every $\ell=0,\ldots,m$
\[
	d(f^\ell(v''),f^\ell(y_i))
	\le \delta_\c
	<\frac{\varepsilon}{15}.
\]
Thus, for every $\ell=0,\ldots,m$ we have $d(f^\ell(v),f^\ell(x_i))<\varepsilon/3$, which implies the claim.
\end{proof}
The proof of the lemma is now complete.
\end{proof}

\subsubsection{Definition of $\Gamma^+$}
\label{sss.Gamma}

Consider the full rectangles $C_i$, $i \in \{1,\dots, \card\mathfrak{X}\}$ 
and the return time $N=N_{m,\eps_1}$ defined in~\eqref{def:Nhatepsm} and let
\[
	\Gamma^+
	= \Gamma_{N_{m,\eps_1}}
	\eqdef \bigcup_{n=0}^{N-1}f^n(\Lambda^+),
	\quad\Lambda^+
	\eqdef \Big\{x\colon f^{kN}(x)\in\bigcup_{i=1}^{\card\mathfrak{X}}C_i
			\text{ for all }k\in\bZ\Big\}.
\]

\subsection{Hyperbolicity and entropy of the set $\Gamma^+$}
\label{s.hypGamma}

\begin{proposition}[Hyperbolicity of $\Gamma^+$]\label{p.Gammahyp}
For every $m\ge1$ large enough, the set $\Gamma^+=\Gamma_{N_{m,\eps_1}}$ is a basic set with stable index $s$ such that $f^{N_{m,\eps_1}}|_{\Lambda^+}$ is conjugate to the full shift on $\card\mathfrak{X}$ symbols. Moreover, the topological entropy of $f$ on $\Gamma^+$  satisfies
\[
 	\frac{m(h(\mu) -\varepsilon_H)- |\log L_0|}{N_{m,\eps_1}} 
	\le  h_{\rm top}(f,\Gamma^+) 
 	\le\frac{m(h(\mu) +\varepsilon_{H}) +|\log L_0|}{N_{m,\eps_1}}.
\]
\end{proposition}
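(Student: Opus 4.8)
The statement of Proposition~\ref{p.Gammahyp} has three parts: (i) hyperbolicity of $\Gamma^+$ with stable index $s$; (ii) the conjugacy of $f^{N}|_{\Lambda^+}$ (where $N=N_{m,\eps_1}$) to the full shift on $\card\mathfrak X$ symbols; and (iii) the entropy estimate. I would treat (ii) first, since it underpins both the others. For this I need to check that the family of full rectangles $\{C_i\}$, together with the first-return map under $f^N$, forms a genuine Markov structure: by Remark~\ref{r.thecubesCi}, $f^N(R^\cu_i)$ is a $\c$-complete $\u$-strip of the blender-horseshoe and by construction $f^N(C_i)$ is the $\ss$-saturation of such a strip inside $\mathbf C^+_f$. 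The key geometric point is that a $\c$-complete $\u$-strip crosses the domain $\mathbf C^+_f$ all the way through in the central-unstable direction, while each $C_j$ is a full rectangle (the $\ss$-saturation of a small $\cu$-disk $R^\cu_j$), so $f^N(C_i)$ intersects each $C_j$ in exactly one sub-rectangle of ``full $\cu$-extent and small $\ss$-extent'' — the standard Markovian crossing. Combined with the disjointness of the $C_i$ (Lemma~\ref{l.bowenballs}), this gives the conjugacy of the maximal invariant set $\Lambda^+$ under $f^N$ to $\{1,\dots,\card\mathfrak X\}^{\bZ}$ by the usual itinerary map.

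\textbf{Hyperbolicity.} For part (i), I would exhibit invariant cone fields and uniform expansion/contraction for $f^N$ on $\Lambda^+$. The stable cone is the strong stable cone $\cC^\ss_\vartheta$: it is $Df$-backward-invariant everywhere and $Df^{-1}$ contracts it at rate $\lambda_{\rm fk}^{-1}<1$ (Remark~\ref{r.lambda}), so after $N$ iterates we get contraction $\lambda_{\rm fk}^{-N}$, giving the $s$-dimensional stable bundle $E^\ss$. For the unstable direction I would use the center-unstable cone $\cC^\cu_\vartheta$, which is $Df$-forward-invariant by domination; the subtlety is that along the central sub-direction $Df$ is \emph{not} a priori expanding outside the blender. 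However, during the $N$ iterates that an orbit segment of a point in $\Lambda^+$ spends realizing one ``Markov return,'' the central derivative is: (a) essentially neutral (controlled by $e^{\pm m\eps_2}$, i.e. $K_0^{-1}e^{-\ell\eps_2}\le\|Df^\ell|_{T\gamma}\|\le K_0 e^{\ell\eps_2}$) during the $m$ steps shadowing the skeleton, by~\eqref{eq:pascoa} and Remark~\ref{r:maisumacentral}; (b) bounded by $\mathfrak m^{t_{\rm con}},\mathfrak M^{t_{\rm con}}$ during the two connecting times; and (c) \emph{expanded} by at least $\lambda_{\rm bh}^{\ell_{m,\eps_1}}$ during the $\ell_{m,\eps_1}$ steps inside the blender, by~\eqref{e.nexpansionCu} and Proposition~\ref{p.c.coveringproperty}. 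Since by Lemma~\ref{lem:katrin} one has $\ell_{m,\eps_1}\log\lambda_{\rm bh}\sim m(\sqrt{\eps_1}+2\eps_1)$, while the possible central contraction over the shadowing phase is at worst $e^{-m\eps_2}$ and $\eps_2<\eps_1\ll\sqrt{\eps_1}$, the net central derivative over one full period $N$ is uniformly larger than $1$ (say $\ge e^{m\sqrt{\eps_1}/2}$) once $m$ is large. Together with the expansion of $E^\uu$ this gives uniform expansion of $Df^N$ on the $(1+u)$-dimensional center-unstable cone, hence the unstable bundle $E^\ss{}^\perp = E^\c\oplus E^\uu$. Local maximality and transitivity (from the full-shift conjugacy) then give that $\Gamma^+$ — the $f$-orbit of $\Lambda^+$ — is a basic set; its stable index is $s$ because the stable bundle is $E^\ss$. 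Note this also shows $\Gamma^+$ is central expanding, and (combined with the estimates giving $\chi^\c$ close to $0$) the exponents are small.

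\textbf{Entropy.} Part (iii) is then essentially bookkeeping. Since $f^N|_{\Lambda^+}$ is conjugate to the full shift on $\card\mathfrak X$ symbols, $h_{\rm top}(f^N,\Lambda^+)=\log\card\mathfrak X$. The $f$-orbit relation $h_{\rm top}(f,\Gamma^+)=\frac1N h_{\rm top}(f^N,\Lambda^+)$ (the standard suspension/return-time identity for the finite-to-one setup $\Gamma^+=\bigcup_{n=0}^{N-1}f^n(\Lambda^+)$) gives $h_{\rm top}(f,\Gamma^+)=\frac1N\log\card\mathfrak X$. Finally the skeleton bound $L_0^{-1}e^{m(h(\mu)-\varepsilon_H)}\le\card\mathfrak X$ — and the (obvious) matching upper bound $\card\mathfrak X\le L_0 e^{m(h(\mu)+\varepsilon_H)}$, which follows from the same Brin--Katok/separation count in Proposition~\ref{pro:BriKat} (or, if only the lower bound was extracted, from bounding the number of $(m,\varepsilon)$-separated points in $M$) — yields
\[
	\frac{m(h(\mu)-\varepsilon_H)-|\log L_0|}{N_{m,\eps_1}}
	\le h_{\rm top}(f,\Gamma^+)
	\le \frac{m(h(\mu)+\varepsilon_H)+|\log L_0|}{N_{m,\eps_1}},
\]
which is exactly the claimed two-sided estimate.

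\textbf{Main obstacle.} The routine parts are the cone-field verification and the entropy counting. The one genuinely delicate point is establishing the Markovian crossing in part (ii) rigorously: one must show that $f^N(C_i)$ meets each $C_j$ in a single full sub-rectangle and that the pieces so obtained are again full rectangles foliated by $\ss$-complete disks and tangent to the relevant cone fields — i.e., that the class of full rectangles is genuinely invariant under the correspondence, so that the inverse-limit / itinerary construction goes through and produces a hyperbolic locally maximal set. This requires carefully combining the $\c$-completeness of $f^N(R^\cu_i)$ (Corollary~\ref{corl.minmalwidth} and Remark~\ref{r.thecubesCi}) with the fact, recorded in Remark~\ref{r.npossibilitiesstrip}, that the image of a $\c$-complete $\u$-strip again contains a $\c$-complete $\u$-strip, together with the transverse-intersection/local-product-structure properties of the blender-horseshoe; the absence of a true central foliation (handled via the fake foliations $\widehat\cW^\cu$, $\widehat\cW^\uu$ of Section~\ref{s.fake}) is what makes this bookkeeping heavier than in the affine model.
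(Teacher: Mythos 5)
Your proposal is correct and follows essentially the same route as the paper: the central derivative over one return period is estimated by combining the near-neutral behaviour ($e^{\pm\ell\eps_1}$, via Remark~\ref{r:maisumacentral}) during the $m$ shadowing steps, the bounded contribution of the two connecting times, and the expansion $\lambda_{\rm bh}^{\ell_{m,\eps_1}}$ inside the blender, with Lemma~\ref{lem:katrin} showing the balance is strictly positive for large $m$; the full-shift conjugacy comes from the Markovian-crossing claim for $f^{N}(C_i)$ (the paper's Claim~\ref{c.l.markov}); and the entropy bounds follow from $h_{\rm top}(f,\Gamma^+)=N^{-1}\log\card\mathfrak X$. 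Your explicit remark that the upper entropy bound needs a matching upper bound on $\card\mathfrak X$ (or on the Birkhoff averages of $\log\lVert Df|_{E^\c}\rVert$, which is how the paper disposes of the corresponding upper exponent bound) is a point the paper itself treats only in passing, so no gap there.
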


\begin{proof}
As in \cite{DiaGelRam:17} it is enough for every point in the auxiliary set ${\Lambda^+}$ to estimate the finite time Lyapunov exponent corresponding to multiples of the return time $N=N_{m,\eps_1}$. Note that the points of $f^{t_{\rm con}}(C_i)$ ``shadow''  the orbit piece $x_i,\ldots,f^m(x_i)$.
By the construction of $C_i$ and by Remark~\ref{r:maisumacentral}, during this shadowing period, the derivative in the central direction for the point $f^{t_{\rm con}}(y)$, where  $y\in C_i$, satisfies for each $\ell\in\{0,\dots,m\}$,
\[
	K_0^{-1} e^{- \ell\eps_1}
	\le \lVert Df^\ell|_{E^\c_{f^{t_{\rm con}}(y)}}\rVert
	\le K_0 e^{\ell\eps_1}.
\]
Taking into account twice the transition time $t_{\rm con}$ and the blending time $\ell=\ell_{m,\eps_1}$, we obtain
\[%\begin{equation}\label{e.centralderivatives}
	K_0^{-1} e^{-m \eps_1}
		\mathfrak{m}^{2t_{\rm con}}  \lambda_{\rm bh}^{\ell}
	\le \lVert Df^{N_{m,\eps_1}}|_{E^\c_y}\rVert
\]
%\end{equation}}
where $\mathfrak m$ and $\mathfrak M$ are defined in \eqref{eq:defmdf}
and  $\lambda_{\rm bh}>1$ in \eqref{e.nexpansionCu}. 
Hence, we obtain 
\[
\begin{split} 
 	\frac{1}{N_{m,\eps_1}}\log\,\lVert Df^{N_{m,\eps_1}}|_{E^\c_y}\rVert
	&\ge \frac{-\log K_0 
			- m\eps_1
			+ 2t_{\rm con}\log \mathfrak{m} 
			+ \ell\log \lambda_{\rm bh}}
			{2t_{\rm con}+m+\ell_{m,\eps_1}}\\
	&\eqdef C(m,\eps_1).		
\end{split}
\]
By Lemma~\ref{lem:katrin}, we have $\ell\log \lambda_{\rm bh}\sim m(\sqrt{\eps_1}+2\eps_1)$.
This implies that
\[
	C(m,\eps_1)
	\sim 
	\frac{ m \left(-\eps_1+\sqrt{\eps_1}+2\eps_1 \right)}
		{m+\ell_{m,\eps_1}}.
\]
where the suppressed constants depended only on the diffeomorphism $f$, the blender-horseshoe quantifiers (expansion constants and opening of the cone fields) but neither on $m$ nor on $\eps_1$. 
Thus, the numerator is positive and if $m$ was sufficiently large, then each finite time central exponent is (uniformly) strictly positive and hence the set $\Gamma^+$ is uniformly central expanding.

We will refrain from proving the upper bound, which is analogous and anyway follows from Lemma~\ref{l.p.birkhoff} by including the continuous function $\varphi=\log\,\lVert Df|_{E^\c}\rVert$ into the family $\Phi$ considered there.

Note that, by construction, $\Gamma^+$ is compact and $f$-invariant.

 To prove  that $f^{N_{m,\eps_1}}|_{\Lambda^+}$ is conjugate to the full shift on $\card\mathfrak{X}$ symbols we use  the next claim. First, given a full rectangle $C_i$
we say that (a local) manifold $S$ tangent to cone field $\mathcal{C}^\cu$  {\emph{intersects $C_i$ in a Markovian way}}  if $S$ intersects (transversally) $\cF^\ss_{\mathbf{C}^+_f}(x)$ for all $x\in R_i^\cu$.
  
\begin{claim}
	\label{c.l.markov}
	Let 
	$S$ be a (local) manifold tangent to $\cC^\cu$ that intersects $C_i$ in a Markovian way and
	$i\in \{1,\ldots,\card\mathfrak X\}$. Let $S_i=S\cap C_i$.
     Then the set $f^{N_{m,\eps_1}}(S_i)$ intersects
	$C_j$ in a
	 Markovian way for every $j\in \{1,\ldots,\card\mathfrak X\}$.
\end{claim}
\begin{proof}
First observe that, by Remark~\ref{r.thecubesCi}, the claim is true for  $S=R^\cu_i$.
 Then observe that for any $S$ as in the claim,  the contraction on the strong stable leaves implies that the sets $f^{N_{m,\eps_1}}(S_i)$ and $f^{N_{m,\eps_1}}(R^\cu_i)$ are close. Hence, the claim follows. 
 \end{proof}

 Now, the fact that the restriction of $f^{N_{m,\eps_1}}$ to
$\Lambda^+$ is conjugate to the full shift on $\card \mathfrak X$ symbols is an easy consequence of the uniform expansion of $Df|_{\Gamma^+}$ along $E^\cu$, the uniform contraction along $E^\ss$, and Claim~\ref{c.l.markov}. This conjugation together with Lemma~\ref{lem:katrin} gives the estimates for the entropy for $f$ on $\Gamma^+$. 

The proof of the proposition is now complete.
\end{proof}

\subsection{Controlling Birkhoff averages}
\label{ss.birkhoff}

The control of the Birkhoff averages is similar to the control of the central Lyapunov exponent. The statement for the exponents would be almost immediate from the following, though we needed to show that exponents are not only close to the central exponent $\chi(\mu)$, but also positive. Moreover, we needed to show that $\Gamma^+$ was uniformly expanding in the central direction.  

Recall the choice of the number $\eps_B$ in Section~\ref{ss:prelim}.
We now prove the following result.

\begin{proposition}[Weak$\ast$ approximation]\label{p:weak}
For $m\ge1$ large enough, we have $D(\nu,\mu)<4\eps_B$
for every probability measure $\nu$ supported on $\Gamma^+$. 
\end{proposition}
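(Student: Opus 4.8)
The plan is to reduce everything to a one-orbit-block estimate and then invoke the choice of the finite family $\Phi$ made around~\eqref{eq:choiceofeB}. First I would note that it suffices to prove, for each $\varphi_j\in\Phi$, that $\bigl\lvert\int\varphi_j\,d\nu-\int\varphi_j\,d\mu\bigr\rvert<2\eps_B$ once $m$ is large: by the property of $\Phi$ recorded around~\eqref{eq:choiceofeB}, applied with $2\eps_B$ in place of $\eps_B$ (valid for the same $\Phi$, since the constant $2$ there comes from a threshold-independent tail bound), this forces $D(\nu,\mu)<4\eps_B$. Moreover, writing $\nu$ as the integral of its ergodic components — each supported on the compact invariant set $\Gamma^+$ — and bounding $\lvert\int\varphi_j\,d\nu-\int\varphi_j\,d\mu\rvert$ by the supremum of the corresponding quantities over those components, I may assume $\nu$ ergodic.

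Fix $\varphi_j\in\Phi$, put $\overline\varphi_j\eqdef\int\varphi_j\,d\mu$, $C_\Phi\eqdef\max_{\varphi_j\in\Phi}\lVert\varphi_j\rVert_\infty$, and let $\omega_\Phi$ be a common modulus of continuity of the (finite) family $\Phi$. Recall that $\varepsilon\in(0,\varepsilon_0)$, $\varepsilon_E$, $\eps_D$, $\eps_D^\ss$, $\eps_D^\uu$ (hence $\eps_1$) were chosen sufficiently small in Sections~\ref{ss:prelim}--\ref{skeleleton}; I will use, in particular, $\omega_\Phi(\varepsilon/3)<\eps_B/2$ and $2\eta(\eps_1)\,C_\Phi<\eps_B/2$, where
\[
	\eta(\eps_1)\eqdef\frac{\sqrt{\eps_1}+2\eps_1}{\log\lambda_{\rm bh}+\sqrt{\eps_1}+2\eps_1}.
\]
The core of the argument is the following estimate for a single block of the return map. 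Recall from Remark~\ref{r.thecubesCi} that $N=N_{m,\eps_1}=2t_{\rm con}+m+\ell_{m,\eps_1}$ and that $w\in\Lambda^+$ means $f^{kN}(w)\in C_{i_k}$ for all $k\in\bZ$, for some indices $i_k\in\{1,\dots,\card\mathfrak{X}\}$. Fix $w\in\Lambda^+$ and $k\ge0$. Since $f^{kN+t_{\rm con}}(w)\in f^{t_{\rm con}}(C_{i_k})$, the Claim in the proof of Lemma~\ref{l.bowenballs} gives $d\bigl(f^{kN+t_{\rm con}+\ell}(w),f^\ell(x_{i_k})\bigr)<\varepsilon/3$ for all $\ell=0,\dots,m-1$, so the partial sum of $\varphi_j$ along $f^{kN+t_{\rm con}}(w),\dots,f^{kN+t_{\rm con}+m-1}(w)$ differs from that along $x_{i_k},\dots,f^{m-1}(x_{i_k})$ by at most $m\,\omega_\Phi(\varepsilon/3)$. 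Combining this with the Birkhoff control~\eqref{e.birrrrkhoff} of $x_{i_k}$ (taken with $n=m$) and bounding the remaining $N-m=2t_{\rm con}+\ell_{m,\eps_1}$ iterates of the block trivially by $C_\Phi$, one gets
\[
	\Bigl\lvert\sum_{r=0}^{N-1}\varphi_j(f^{kN+r}(w))-m\,\overline\varphi_j\Bigr\rvert
	\le E_m,
	\qquad
	E_m\eqdef m\,\omega_\Phi(\varepsilon/3)+K_0+m\,\eps_B+(N-m)\,C_\Phi.
\]

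Next I would pass from blocks to the measure. An ergodic, $\nu$-generic point $y\in\Gamma^+$ can be written $y=f^a(w)$ with $w\in\Lambda^+$ and $0\le a<N$. Summing the block estimate over $k=1,\dots,L-1$, absorbing the at most $N$ ``boundary'' iterates into an extra term $N\,C_\Phi$, dividing by $LN$ and letting $L\to\infty$, Birkhoff's ergodic theorem gives $\bigl\lvert\int\varphi_j\,d\nu-\frac{m}{N}\overline\varphi_j\bigr\rvert\le E_m/N$. Since $\bigl\lvert\frac{m}{N}\overline\varphi_j-\overline\varphi_j\bigr\rvert\le\frac{N-m}{N}C_\Phi$ and $\frac{m}{N}\le1$, this yields
\[
	\Bigl\lvert\int\varphi_j\,d\nu-\overline\varphi_j\Bigr\rvert
	\le 2\,\frac{N-m}{N}\,C_\Phi+\omega_\Phi(\varepsilon/3)+\frac{K_0}{N}+\eps_B.
\]
Finally, Lemma~\ref{lem:katrin} gives $\ell_{m,\eps_1}\log\lambda_{\rm bh}\sim m(\sqrt{\eps_1}+2\eps_1)$, hence $\frac{N-m}{N}\to\eta(\eps_1)$ and $K_0/N\le K_0/m\to0$ as $m\to\infty$; so for $m$ large $2\frac{N-m}{N}C_\Phi+\omega_\Phi(\varepsilon/3)+\frac{K_0}{N}<\eps_B$, whence $\lvert\int\varphi_j\,d\nu-\overline\varphi_j\rvert<2\eps_B$, which by the first paragraph finishes the proof.

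The step I expect to be the real obstacle — or rather, the place where the whole construction must earn its keep — is controlling the proportion $(N-m)/N$ of each block spent in the ``transition plus blending'' phase, on which $\varphi_j$ is only bounded trivially. This is exactly what Lemma~\ref{lem:katrin} delivers, and it is here that the smallness of the distortion quantifiers (forcing $\eps_1$, hence $\eta(\eps_1)$, small) and the smallness of $\varepsilon$ (controlling $\omega_\Phi(\varepsilon/3)$) become indispensable: without them the trivially-estimated part of each orbit would contribute an error comparable to $C_\Phi$ rather than to $\eps_B$. The remaining ingredients — the ergodic reduction, the Birkhoff limit, and the boundary bookkeeping — are routine.
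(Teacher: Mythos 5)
Your proposal is correct and follows essentially the same route as the paper's proof via Lemma~\ref{l.p.birkhoff}: decompose each return block of length $N_{m,\eps_1}$ into the $m$ iterates shadowing the skeleton orbit (controlled by the Claim in Lemma~\ref{l.bowenballs} together with~\eqref{e.birrrrkhoff}) and the $2t_{\rm con}+\ell_{m,\eps_1}$ transition iterates bounded trivially, then use Lemma~\ref{lem:katrin} to make the latter fraction small. The only cosmetic differences are that you pass through the ergodic decomposition and Birkhoff's theorem where the paper proves a uniform estimate for all finite-time averages of points of $\Gamma^+$, and that you make explicit (via the modulus of continuity $\omega_\Phi(\varepsilon/3)$) the step the paper leaves as ``we refrain from giving explicit estimates.''
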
	

Recall the choice of the finite set $\Phi=\{\varphi_j\}$ of continuous potentials over $M$ and equation \eqref{eq:choiceofeB}.  The next lemma implies the above proposition.

\begin{lemma}\label{l.p.birkhoff}
	For every $x\in\Gamma^+$, every $\varphi_j\in\Phi$, and every $n$ large enough 
$$
	\left\lvert\frac{1}{n}\sum_{j=0}^{n-1}\varphi_k(f^j(x))-\int\varphi_k d\mu\right\rvert
	<3\eps_B.
$$
\end{lemma}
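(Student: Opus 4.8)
The plan is to exploit the block structure of orbits in $\Gamma^+$. Since $\Gamma^+=\bigcup_{n=0}^{N-1}f^n(\Lambda^+)$ with $N=N_{m,\eps_1}=2t_{\rm con}+m+\ell_{m,\eps_1}$ (see~\eqref{def:Nhatepsm}), I would first reduce to $x\in\Lambda^+$: writing $x=f^r(x')$ with $x'\in\Lambda^+$ and $0\le r<N$, the Birkhoff averages over $n$ iterates of $x$ and of $x'$ differ by at most $\tfrac{2r}{n}\lVert\varphi_k\rVert_\infty\le\tfrac{2N}{n}\lVert\varphi_k\rVert_\infty$, which is negligible for $n$ large since $N$ is already fixed by the choice of $m$. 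For $x\in\Lambda^+$ one has $f^{kN}(x)\in C_{i_k}$ for each $k$; decomposing $[0,n)$ into blocks of length $N$, inside the $k$-th block the orbit point $f^{kN+t_{\rm con}}(x)$ lies in $f^{t_{\rm con}}(C_{i_k})$, so by the Claim in the proof of Lemma~\ref{l.bowenballs} it $m$-shadows the skeleton orbit piece: $d(f^\ell(x_{i_k}),f^{kN+t_{\rm con}+\ell}(x))<\eps/3$ for $\ell=0,\dots,m-1$. The remaining $2t_{\rm con}+\ell_{m,\eps_1}$ iterates of each block are ``wasted'' and will only be controlled by $\lVert\varphi_k\rVert_\infty$.

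Next I would record that (we may and do assume) $\eps$ is small enough that $d(p,q)<\eps/3$ implies $\lvert\varphi_k(p)-\varphi_k(q)\rvert<\eps_B$ for every $\varphi_k\in\Phi$; this is admissible because $\Phi$ is a fixed finite family of continuous functions on the compact $M$, chosen before $\eps$ is. Then the shadowing gives $\bigl\lvert\sum_{\ell=0}^{m-1}\varphi_k(f^{kN+t_{\rm con}+\ell}(x))-\sum_{\ell=0}^{m-1}\varphi_k(f^\ell(x_{i_k}))\bigr\rvert\le m\eps_B$, while the skeleton estimate~\eqref{e.birrrrkhoff} (with $n=m$) gives $\bigl\lvert\sum_{\ell=0}^{m-1}\varphi_k(f^\ell(x_{i_k}))-m\overline\varphi_k\bigr\rvert\le K_0+m\eps_B$, where $\overline\varphi_k=\int\varphi_k\,d\mu$. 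Hence the shadowing part of the $k$-th block differs from $m\overline\varphi_k$ by at most $K_0+2m\eps_B$, and its wasted part differs from $(2t_{\rm con}+\ell_{m,\eps_1})\overline\varphi_k$ by at most $2(2t_{\rm con}+\ell_{m,\eps_1})\lVert\varphi_k\rVert_\infty$ (using $\lvert\overline\varphi_k\rvert\le\lVert\varphi_k\rVert_\infty$). Summing over the $q=\lfloor n/N\rfloor$ complete blocks, discarding the incomplete last block (which contributes $O(N/n)$ to the average), and dividing by $n$,
\[
	\Bigl\lvert\frac1n\sum_{j=0}^{n-1}\varphi_k(f^j(x))-\overline\varphi_k\Bigr\rvert
	\le \frac{K_0}{N}+\frac{2m\eps_B}{N}+\frac{2(2t_{\rm con}+\ell_{m,\eps_1})\lVert\varphi_k\rVert_\infty}{N}+o(1),
\]
with $o(1)\to0$ as $n\to\infty$.

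To conclude I would note $\tfrac{2m\eps_B}{N}<2\eps_B$ (since $N>m$) and $\tfrac{K_0}{N}\to0$ as $m\to\infty$, and bound the third term via Lemma~\ref{lem:katrin}, which gives $\ell_{m,\eps_1}\log\lambda_{\rm bh}\sim m(\sqrt{\eps_1}+2\eps_1)$, whence
\[
	\limsup_{m\to\infty}\frac{2(2t_{\rm con}+\ell_{m,\eps_1})\lVert\varphi_k\rVert_\infty}{N_{m,\eps_1}}
	\le \frac{2(\sqrt{\eps_1}+2\eps_1)}{\log\lambda_{\rm bh}}\,\max_{\varphi_j\in\Phi}\lVert\varphi_j\rVert_\infty.
\]
Since $\eps_1$ — equivalently $\eps_E,\eps_D^\ss,\eps_D^\uu,\eps_D$, which are fixed after $\eps_B$ and $\Phi$ — may be assumed small enough that this upper bound is $<\eps_B$, for $m$ and then $n$ large enough the right-hand side of the displayed inequality is $<3\eps_B$, which is the lemma; Proposition~\ref{p:weak} then follows by applying~\eqref{eq:choiceofeB} (with $3\eps_B$ in place of $\eps_B$, hence $D(\nu,\mu)<4\eps_B$) to an ergodic $\nu$ on $\Gamma^+$ via the Birkhoff theorem, and then to arbitrary $\nu$ on $\Gamma^+$ by ergodic decomposition.

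I expect the main obstacle — really the only delicate point — to be this last bookkeeping: one must verify through Lemma~\ref{lem:katrin} that the wasted fraction $(2t_{\rm con}+\ell_{m,\eps_1})/N$ of each block is bounded, as $m\to\infty$, by a quantity proportional to $\sqrt{\eps_1}/\log\lambda_{\rm bh}$, and check that the two smallness requirements — $\eps$ small relative to the modulus of continuity of $\Phi$, and $\eps_1$ small relative to $\eps_B/\max_j\lVert\varphi_j\rVert_\infty$ — are mutually consistent with the constants already chosen in Sections~\ref{ss:prelim} and~\ref{skeleleton}. Everything else (the block decomposition, the shadowing estimate $d(f^\ell(x_{i_k}),f^{kN+t_{\rm con}+\ell}(x))<\eps/3$) is either routine or already contained in the proof of Lemma~\ref{l.bowenballs}.
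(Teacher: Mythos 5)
Your proposal is correct and follows essentially the same route as the paper's proof: reduce to return-time blocks of length $N_{m,\eps_1}$, use the $\eps/3$-shadowing of the skeleton orbit during the middle $m$ iterates together with \eqref{e.birrrrkhoff}, bound the $2t_{\rm con}+\ell_{m,\eps_1}$ remaining iterates by $\lVert\Phi\rVert$, and control the wasted fraction via Lemma~\ref{lem:katrin}. The one point where you are more explicit than the paper — that the bound $D(m,\eps_1,\eps_B)\le 3\eps_B$ requires not only $m$ large but also $\eps_1$ small relative to $\eps_B/\lVert\Phi\rVert$, and that $\eps$ must be small relative to the modulus of continuity of the finite family $\Phi$ — is a genuine and correctly resolved bookkeeping issue that the paper leaves implicit in its choice of quantifiers in Sections~\ref{ss:prelim} and~\ref{skeleleton}.
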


\begin{proof}%[Proof of Lemma~\ref{l.p.birkhoff}]
In a similar way as we did for the Lyapunov exponent, it suffices to prove this lemma for finite time averages relative to multiples of the return times $N=N_{m,\eps_1}$ defined in~\eqref{def:Nhatepsm} and for points $y\in\Gamma^+\cap C_i$, for any $i=1,\ldots,\card\mathfrak{X}$.

Recall that $\varepsilon_B$ and the corresponding finite family $\Phi$ was chosen in the very beginning of Section~\ref{ss:prelim}. Let us denote
\[
	\lVert\Phi\rVert
	\eqdef \max_{\varphi_j\in\Phi}\lVert\varphi_j\rVert_{C^0}.
\] 

Now recall that after this preliminary step only, the size $\rho>0$ of fake invariant center-unstable foliations was chosen. Following that, we made the choice of $\delta$ which bounded the distance of points in the ``fake unstable direction'' and the ``strong stable direction'' for any point sufficiently close to $y_i$ (and hence to $x_i$). Finally, choosing $m\ge1$ sufficiently large, we made the choice of $\delta_\c$ which bounded  the distance of points in the ``fake central direction''. 
The construction of the full rectangles $C_i$ was done in such a way that their iterations by $f^{t_{\rm con}},\ldots,f^{t_{\rm con}+m}$ have those distances controlled from the skeleton orbit $x_i,f(x_i),\ldots,f^m(x_i)$ (see the proof of Lemma~\ref{l.bowenballs}). 
Assuming that these choices where done appropriately, we can argue that we have the following estimates. Given $y\in\Gamma^+\cap C_i$, for its finite-time orbit 
\[
	\{y,f(y),\ldots,f^k(y),\ldots, f^{2t_{\rm con}+m+\ell_{m,\eps_1}}(y)\},
\]
 the  part close to the skeleton for $k=t_{\rm con},\ldots,t_{\rm con}+m$ can be estimated by
\begin{equation}\label{eq:combining}
	\left\lvert\frac{1}{m}\sum_{k=0}^{m-1}\varphi_j(f^{t_{\rm con}+k}(y))
		-\frac{1}{m}\sum_{k=0}^{m-1}\varphi_j(f^k(x_i))\right\rvert
	\le \varepsilon_B
\end{equation}
 for every $\varphi_j\in\Phi$. We refrain from giving explicit estimates. 
Hence, estimating the finite-time Birkhoff average at one return to $\Lambda^+$, we obtain
\[
	\frac{1}{N}\sum_{k=0}^{N-1} \varphi_j(f^k(y))
	\le \frac1N (2t_{\rm con}+\ell_{m,\eps_1})\lVert\Phi\rVert 
		+\frac1N\sum_{k=0}^{m-1}\varphi_j(f^{t_{\rm con}+k}(y))
\]
together with the analogous lower bound. Now, combining \eqref{eq:combining} with the property~\eqref{e.birrrrkhoff} of the skeleton, we obtain
\[\begin{split}
	\left\lvert \frac{1}{N}\sum_{k=0}^{N-1} 
		\varphi_j(f^k(y)) - \int\varphi_j\,d\mu\right\rvert
	&\le \frac{2t_{\rm con}+\ell_{m,\eps_1}}{2t_{\rm con}+m+\ell_{m,\eps_1}}\lVert\Phi\rVert
		+ \frac{m}{2t_{\rm con}+m+\ell_{m,\eps_1}}2\varepsilon_B\\
	&\eqdef 	D(m,\eps_1,\varepsilon_B).
\end{split}\]
By Lemma~\ref{lem:katrin}, we have $m/N_{m,\eps_1}\sim 1/(1+(\log\lambda_{\rm bh})^{-1}(\sqrt{\eps_1}+2\eps_1))$, which implies that when choosing $m\gg1$ sufficiently large, we get
\[
	D(m,\eps_1,\varepsilon_B)
	\le 3\eps_B.
\]
This proves the lemma.
\end{proof}

%%%%%%%%%%%%%%%%%%%%%%%%%%%%%%%%%%%%%
\section{Going from negative to positive exponents:\\ Proof of Theorem~\ref{theo:main3twin}}\label{sec:theo:main3twin}
%%%%%%%%%%%%%%%%%%%%%%%%%%%%%%%%%%%%%

In this section we will prove the following result which immediately implies Theorem~\ref{theo:main3twin}, here also notice that the case $\alpha>0>\beta$  follows considering $f^{-1}$ instead of $f$. 

\begin{theorem}\label{teo:stated}
	Assume that $f$ is a $C^1$-diffeomorphism with a partially hyperbolic splitting $TM=E^\ss \oplus E^\c\oplus E^\uu$ with three non-trivial bundles such that $E^\ss$ is uniformly contracting, $E^\c$ is one-dimensional, and $E^\uu$ is uniformly expanding such that the strong stable and the strong unstable foliations are both minimal and $f$ has an unstable blender-horseshoe for $f^{n}$ for some $n\ge1$. 
	Then for every $\mu\in \cM_{\rm erg}(f)$ with $\alpha= \chi(\mu) <0$, there is a positive constant $K(f)\ge (\log\,\lVert Df\rVert)^{-1}$ such that for every $\delta>0$, $\gamma>0$, and  $\beta>0$, there is a basic set $\Gamma$ being central expanding such that
\begin{itemize}
\item[1.] its topological entropy satisfies
	\[
		h_{\rm top}(f,\Gamma)
		\ge \frac{h(\mu)}{1+K(f)(\beta+\lvert\alpha\rvert) } - \gamma,
	\]
\item[2.] every $\nu\in \cM_{\rm erg}(f,\Gamma)$ satisfies
	\[
		 \frac \beta {1 + K(f)(\beta+\lvert\alpha\rvert)}  - \delta
		 < \chi(\nu) <
		 \frac \beta {1+\frac{1}{\log \lVert Df\rVert}(\beta+\lvert\alpha\rvert)}+ \delta,
	\]		
and
	\[
		D(\nu,\mu)
		<\frac{K(f)(\beta+\lvert \alpha\rvert)}
			{1+K(f)(\beta+\lvert \alpha\rvert)}
		+\delta.
	\]	
\end{itemize}
If $h(\mu)=0$ then $\Gamma$ is a hyperbolic periodic orbit.
\end{theorem}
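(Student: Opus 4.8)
The plan is to carry out the construction of Section~\ref{sec:5THEPROOF} verbatim, with the single modification that the \emph{blending time} is now taken to be of order $m$ rather than $o(m)$. One applies Proposition~\ref{pro:BriKat} to $\mu$ --- in the hyperbolic case $\alpha=\chi(\mu)<0$, exactly the case flagged in the footnote of Section~\ref{skeleleton} --- to produce, for small $\varepsilon_H,\varepsilon_E,\varepsilon_B>0$ and all large $m$, a skeleton $\mathfrak X=\{x_i\}$ of $(m,\varepsilon)$-separated points with $\card\mathfrak X\ge L_0^{-1}e^{m(h(\mu)-\varepsilon_H)}$, with central derivatives pinched as in~\eqref{e.derivadacentral} (now contracting like $e^{m\alpha}$) and with Birkhoff sums controlled as in~\eqref{e.birrrrkhoff}. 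The choices of quantifiers in Sections~\ref{ss:prelim}--\ref{sss.sizefakecentral}, the selection of reference points $y_i,z_i$ (Lemma~\ref{l.choosingpoints}), the distortion estimates of Section~\ref{sss.centralestimates} and Remark~\ref{r:maisumacentral}, the fake central curves $\gamma_i$, the $\u$-strips $S_i\subset\mathbf C^+_f$ of width controlled by Lemma~\ref{lem:striplength}, and the full rectangles $C_i$ of Section~\ref{sss.finalconstruction} all go through without change; the sign of $\alpha$ plays no role in any of them.

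The new point is the application of Proposition~\ref{p.c.coveringproperty}. Instead of the minimal covering time $\ell_{m,\varepsilon_1}$ of Corollary~\ref{corl.minmalwidth} (which yields exponents tending to $\alpha<0$), I would fix
\[
	\ell
	\eqdef\Big\lceil\frac{m\,(\beta+\lvert\alpha\rvert)}{\log\lambda_{\rm bh}}\Big\rceil
\]
(which exceeds $\ell_{m,\varepsilon_1}$ once $m$ is large and $\varepsilon_1$ small, cf.\ Lemma~\ref{lem:katrin}) and set $N=N_m\eqdef 2t_{\rm con}+m+\ell$. This is legitimate because Proposition~\ref{p.c.coveringproperty} produces a $\c$-complete $\u$-strip for \emph{every} $\ell\ge\ell(S_i)$, because $\c$-completeness is preserved by further images (Remark~\ref{r.npossibilitiesstrip}), and because each of these $\ell$ iterations takes place inside $\mathbf C^+_f$ and hence expands the centre by at least $\lambda_{\rm bh}$ by~\eqref{e.nexpansionCu}. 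One then defines $\Gamma=\Gamma_{N_m}$ as in Section~\ref{sss.Gamma} and sets $K(f)\eqdef(\log\lambda_{\rm bh})^{-1}$; since $\lambda_{\rm bh}\le\lVert Df\rVert$ one has $K(f)\ge(\log\lVert Df\rVert)^{-1}$, and $N_m/m\to 1+K(f)(\beta+\lvert\alpha\rvert)$.

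With this $N_m$ I would re-run the proofs of Propositions~\ref{p.Gammahyp} and~\ref{p:weak} unchanged. Over one return the central derivative of $y\in\Gamma\cap C_i$ obeys
\[
	K_0^{-1}\mathfrak m^{2t_{\rm con}}e^{m(\alpha-\varepsilon_1)}\lambda_{\rm bh}^{\ell}
	\le\lVert Df^{N_m}|_{E^\c_y}\rVert
	\le K_0\,\mathfrak M^{2t_{\rm con}}e^{m(\alpha+\varepsilon_1)}\,\Xi^{\ell},
\]
with $\Xi$ an upper bound for the per-step central expansion inside the blender. Dividing by $N_m$, using $\alpha+\lvert\alpha\rvert=0$, and letting $m\to\infty$ and then $\varepsilon_1\to0$, the lower bound gives $\chi(\nu)>\beta/(1+K(f)(\beta+\lvert\alpha\rvert))-\delta$ for every $\nu\in\cM_{\rm erg}(f,\Gamma)$; in particular the finite central exponents are uniformly positive, so $\Gamma$ is central expanding. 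The entropy estimate of Proposition~\ref{p.Gammahyp} reads $h_{\rm top}(f,\Gamma)\ge(m(h(\mu)-\varepsilon_H)-\lvert\log L_0\rvert)/N_m\to h(\mu)/(1+K(f)(\beta+\lvert\alpha\rvert))$, which is item~1 after absorbing the errors into $\gamma$. Lemma~\ref{l.p.birkhoff} gives $D(\nu,\mu)\le(2t_{\rm con}+\ell)/N_m+o(1)\to K(f)(\beta+\lvert\alpha\rvert)/(1+K(f)(\beta+\lvert\alpha\rvert))$, the last assertion of item~2. If $h(\mu)=0$, take $\card\mathfrak X=1$: since $f^{N_m}(C_1)$ meets $C_1$ Markovianly it contains a fixed point of $f^{N_m}$, whose orbit is the required hyperbolic periodic orbit, with central exponent in the stated window and zero entropy.

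The main obstacle is the upper inequality for $\chi(\nu)$ in item~2. Since the blender only \emph{guarantees} central expansion of rate $\lambda_{\rm bh}$, the crude choice $\Xi=\lVert Df\rVert$ gives, after dividing by $N_m$, a limit exponent $(\alpha+K(f)(\beta+\lvert\alpha\rvert)\log\lVert Df\rVert)/(1+K(f)(\beta+\lvert\alpha\rvert))$, which is too large to fall below $\beta/(1+(\log\lVert Df\rVert)^{-1}(\beta+\lvert\alpha\rvert))+\delta$. One must therefore control the \emph{distortion} of the central derivative along the blending orbit: either work with a suitable power of $f$ or a sub-blender in which the per-step central expansion is pinched to $[\lambda_{\rm bh},\lambda_{\rm bh}e^{\varepsilon_{\rm bh}}]$ with $\varepsilon_{\rm bh}$ as small as one likes (near-affine blender-horseshoes of this type are available in our setting, cf.\ Proposition~\ref{p.blenderopendense}), or run a tempered-distortion argument inside $\mathbf C^+_f$ in the spirit of Lemma~\ref{l.strongdistortion} and Proposition~\ref{p.distortiondgr}; in either case one may take $\Xi=\lambda_{\rm bh}e^{\varepsilon}$ with $\varepsilon\to0$, which squeezes $\chi(\nu)$ to $\beta/(1+K(f)(\beta+\lvert\alpha\rvert))$ and in particular below the stated upper bound (the weaker denominator $\log\lVert Df\rVert$ there being merely a convenient over-estimate of the unavoidable distortion loss). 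The remaining work is the bookkeeping of the $O(t_{\rm con})$ transition terms and of the errors $\varepsilon_H,\varepsilon_E,\varepsilon_1,\varepsilon_B$, all swept into $\gamma$ and $\delta$ by first letting $m\to\infty$ and then the $\varepsilon$'s tend to $0$, exactly as in the proof of Theorem~\ref{teo:finally}.
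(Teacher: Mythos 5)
Your overall strategy is the paper's: re-run the construction of Section~\ref{sec:5THEPROOF} for a skeleton of $\mu$, arrange the return time to be $N\sim m\bigl(1+K(f)(\beta+\lvert\alpha\rvert)\bigr)$ with $K(f)=(\log\lambda_{\rm bh})^{-1}$, and read off entropy, weak$\ast$ distance and the lower exponent bound from the ratios $m/N$ and $\ell/N$ exactly as you do. The implementation differs in one substantive point. The paper does \emph{not} inflate the covering time beyond the minimum of Corollary~\ref{corl.minmalwidth}; instead it replaces \eqref{eq:formulapetrea} by $\delta_\c\eqdef e^{-m\beta}$ and $r\eqdef\tfrac12e^{-m\beta}$, so that after the $m$ shadowing iterates --- during which the central curves are \emph{contracted} by $e^{m\alpha}=e^{-m\lvert\alpha\rvert}$ --- the strips $S_i$ reach the blender with width $\asymp e^{-m(\beta+\lvert\alpha\rvert)}$, and the \emph{minimal} covering time of Proposition~\ref{p.c.coveringproperty} is then automatically $\sim K(f)\,m(\beta+\lvert\alpha\rvert)$. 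In particular your assertion that ``the sign of $\alpha$ plays no role'' in Sections~\ref{sss.centralestimates}--\ref{sss.finalconstruction} is not correct: \eqref{eq:pascoa}, \eqref{eq:estimatedeltac} and Lemma~\ref{lem:striplength} all acquire an extra factor $e^{-\ell\lvert\alpha\rvert}$, and one needs the additional smallness conditions \eqref{eq:neuepss}--\eqref{eq:newneu} to keep Proposition~\ref{p.distortiondgr} applicable. This slip is not fatal to your scheme --- your $\ell$ still dominates the corrected minimal covering time once $\eps_1\ll\beta^2$ --- so the lower bound on $\chi$, the entropy bound, the weak$\ast$ estimate and the periodic-orbit case all go through.

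The genuine gap is the one you flag yourself: the upper bound on $\chi(\nu)$, and neither of your proposed repairs closes it. A ``near-affine sub-blender with per-step central expansion pinched in $[\lambda_{\rm bh},\lambda_{\rm bh}e^{\varepsilon_{\rm bh}}]$'' is not available here: the theorem concerns a fixed $f$ with a given, arbitrary, unstable blender-horseshoe (the paper stresses that no perturbation is performed), a sub-horseshoe of a blender-horseshoe need not retain the superposition property, and passing to a subset of $\mathbf C^+_f$ does not pinch $\lVert Df|_{E^\c}\rVert$. Tempered distortion in the spirit of Lemma~\ref{l.strongdistortion} and Proposition~\ref{p.distortiondgr} controls \emph{ratios} of central derivatives at nearby points, not the per-step rate itself, so it cannot legitimize $\Xi=\lambda_{\rm bh}e^{\varepsilon}$. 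What the form of the stated bound actually reflects is a width-ratio (covering) argument: over the blending phase the surviving strip is expanded from width $\asymp e^{-m(\beta+\lvert\alpha\rvert)}$ to a $\c$-complete strip of width $O(1)$, so, modulo distortion along its central curves, the total central derivative over one return is $\lesssim e^{m\alpha}\cdot e^{m(\beta+\lvert\alpha\rvert)}=e^{m\beta}$, while $N\ge m+\ell\ge m\bigl(1+(\beta+\lvert\alpha\rvert)/\log\lVert Df\rVert\bigr)$ since $K(f)\ge(\log\lVert Df\rVert)^{-1}$; dividing gives the stated upper bound. This mechanism requires the blending time to be essentially the covering time of a strip of width $e^{-m(\beta+\lvert\alpha\rvert)}$, which is precisely what the normalization $\delta_\c=e^{-m\beta}$ arranges. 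In your variant the strip becomes $\c$-complete after only $\sim K(f)\,m(\sqrt{\eps_1}+\lvert\alpha\rvert)$ steps and is then iterated a further $\sim K(f)\,m(\beta-\sqrt{\eps_1})$ steps --- a definite positive fraction of $N$ --- during which the surviving subset carries no width constraint tying its central derivative to the initial width, and only the useless bound $\lVert Df\rVert$ per step by \eqref{e.nexpansionCu} remains. You should therefore adopt the paper's choice of $r$ and $\delta_\c$ and then supply the width-ratio estimate explicitly (the paper itself only asserts that ``the upper bound is analogous'', so this step needs to be written out in either approach).
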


We remark that the constant $K(f)$ in the above theorem is related to the minimal expansion rate in the center direction defined in~\eqref{e.nexpansionCu} in the unstable blender-horseshoe. 

The proof of Theorem~\ref{teo:stated} is very similar to the one of Theorem~\ref{teo:finally}. Hence, we only sketch the main differences. 

Consider an ergodic measure $\mu$ satisfying $\alpha\eqdef \chi(\mu)<0$ and let $\beta>0$. Denote $h= h(\mu)$. Fix as before an unstable blender-horseshoe, together with a safety domain and positive constants  $\tau,\vartheta_0,\theta>0$. Assume, for simplicity, $n^+=1$.
As in Section~\ref{ss:prelim},  fix $\varepsilon_H,\varepsilon_E,\varepsilon_B$, the constant $\vartheta$, and consider the fake invariant foliations with associate numbers $\rho>\rho_1>0$. Fix $\varepsilon_D,\varepsilon^\ss_D,\varepsilon^\uu_D,\delta_0$ and $m_0$ as before. Define $\eps_2,\eps_1$ as in~\eqref{eq:epsilonhattilde}.
Without loss of generality, we can assume that those quantifiers were chosen small enough such that we also have
\begin{equation}\label{eq:neuepss}
	-\lvert\alpha\rvert+2\varepsilon_E+\eps_D^\ss+\eps_D^\uu
	<0
	\quad\text{ and }\quad
	\eps_D<\beta.
\end{equation}

By Proposition~\ref{pro:BriKat}, there exists $\varepsilon_0>0$ so that for every $\varepsilon\in(0,\varepsilon_0)$  there are constants $K_0,L_0\geq 1$ and an integer $n_0\geq 1$ such that for every $m\geq n_0$ there exists a finite set  $\mathfrak{X}= \{x_i\}$ of $(m,\varepsilon)$-separated points so that:
\begin{itemize}
	\item $\card\mathfrak{X}\geq L_0^{-1}e^{m (h-\varepsilon_H)}$;
	\item for every $\ell=0,\ldots,m$ and every $i$ one has 
	\begin{equation}\label{e.derivadacentralneu}
	K_0^{-1}e^{-\ell(\alpha+\varepsilon_E)}\leq
	\lVert Df^\ell |_{E^\c_{x_i}} \rVert 
	\leq K_0e^{\ell(\alpha+\varepsilon_E)}
	\end{equation}
\end{itemize}   

Choose $\eps\in(0,\eps_0)$ and fix the corresponding constants $K_0,L_0,n_0$.

Fix $\delta$ as before. Fix the connecting time $t_{\rm con}$ as before. 

Finally, make an appropriate choice of $m$ sufficiently large as before. And assume also that we have 
\begin{equation}\label{eq:newneu}
	e^{-m(\beta+\lvert\alpha\rvert+\eps_2)}
	< e^{-m\beta}
	< \delta_0K_0^{-1}e^{-m\varepsilon_D}.
\end{equation}
Define 
\[%\begin{equation}\label{eq:deltacneu}
	\delta_\c
	\eqdef e^{-m\beta}.
\]%\end{equation}

Choose points $y_i,z_i$ as in Lemma~\ref{l.choosingpoints}. As in Section~\ref{sss.centralestimates}, together with~\eqref{e.derivadacentralneu}, for any point  $y\in \widehat \cW^\uu_{z_i} (z_i, \delta/4)$, for every $\ell=0,\ldots,m$ we get
\begin{equation}\label{eq:afomurlaneu}
	 K_0^{-1}e^{\ell (-\lvert\alpha\rvert-2\varepsilon_E-\eps_D^\ss-\eps_D^\uu)}\leq
	\lVert  Df^\ell |_{T_{y}\widehat\cW^\c_{z_i}} \rVert 
	\leq  K_0 e^{\ell (-\lvert\alpha\rvert+2\varepsilon_E+\eps_D^\ss+\eps_D^\uu)}.
\end{equation}
Let
\begin{equation}\label{def:rneu}
	r
	\eqdef \frac12\delta_\c=\frac12e^{-m\beta}.
\end{equation}
We now apply Proposition~\ref{p.distortiondgr} to $\vartheta$ and $\varepsilon_D$ with $\delta_0$ as in~\eqref{eq:choosedelta0} satisfying $\Mod_\vartheta(\delta_0)
	\le \varepsilon_D$ and to $\varepsilon=2\varepsilon_E+\eps_D^\ss+\eps_D^\uu$  
and $K=K_0$. Note that the hypothesis of this proposition is indeed satisfied because for all $\ell\in\{0,\ldots,m\}$ we have
\[
	\lVert Df^\ell|_{E^\c_y}\rVert
	\le K_0 e^{\ell (-\lvert\alpha\rvert+2\varepsilon_E+\eps_D^\ss+\eps_D^\uu)}
	\le K_0 e^{\ell\varepsilon_D},
\]
where we used~\eqref{eq:neuepss}. Hence, recalling that by~\eqref{eq:newneu} we have
\[
	\frac12e^{-m\beta}
	= r	
	<\delta_0K_0^{-1}e^{-m\varepsilon_D},
\]
by this proposition together with~\eqref{eq:afomurlaneu},  for every $x\in\gamma\eqdef\widehat\cW^\c_{z_i}(y,r)$ for every $\ell=0,\ldots,m$  we have 
\[%\begin{equation}\label{eq:pascoaneu}
	K_0^{-1} e^{-\ell(\lvert\alpha\rvert+ \eps_2)}
	\le \lVert Df^\ell|_{T_x\gamma}\rVert
	\le K_0 e^{-\ell(\lvert\alpha\rvert-\eps_2)}.
\]%\end{equation}

Now the construction of the auxiliary center-unstable rectangles $R^\cu_{i,0}$ is as in Section~\ref{sec:fakecurect}. Observe that they now are much ``thinner in the fake central direction'' than before and hence also always well defined (we stay always in the domain of the (locally defined) fake foliation). Recall again that our overall assumption is a partially hyperbolic splitting. In particular, the subbundles $E^\uu$ and $E^\c$ are dominated. Hence, the same arguments about the variation of the length of central curves foliating a center-unstable ``strip'' in Section~\ref{sec:fakecurect} apply. Thus, we define $R^\cu_{i,0}\eqdef\bigcup_{\gamma\in\cL}\gamma$ as before and for every curve $\gamma\in\cL$ for every $\ell\in\{0,\ldots,m\}$ we have
\[
	\frac12 r\cdot K_0^{-1}e^{-\ell(\lvert\alpha\rvert+\eps_2)}
	\le \lvert f^\ell(\zeta)\rvert
	\le 2r\cdot K_0 e^{-\ell(\lvert\alpha\rvert-\eps_2)}
\]
and by our choice of $r$ in~\eqref{def:rneu} obtain
\[
	\frac14  K_0^{-1}e^{-m(\beta+\lvert\alpha\rvert+\eps_2)}
	\le \lvert f^m(\zeta)\rvert
	\le  e^{-m(\beta+\lvert\alpha\rvert-\eps_2)}	
	<\delta_\c
	<\delta_0,
\]
where we used~\eqref{eq:newneu}.
Therefore, itens (i), (iii)--(v) in Remark~\ref{rem:summary} hold true.  
Moreover, instead of item (ii) in Remark~\ref{rem:summary} we have
\begin{itemize}
\item[(ii)] For every $p\in R^\cu_{i,m}$ the curve $\alpha=R^\cu_{i,m}\cap\widehat\cW^\c_{f^m(z_i)}(p)$ has length satisfying
\[
	\frac14 K_0^{-1}
	e^{-m(\beta+\lvert\alpha\rvert+\eps_1)}
	\le\lvert\alpha\rvert
	\le \delta_\c.
\] 
\end{itemize}
As for Lemma~\ref{lem:striplength}, we show that for every $i\in \{1, \dots, \card \mathfrak X\}$ the (inner) width of the $\u$-strip $S_i$ can be estimated as follows  
\[
	 \frac18\mathfrak m^{t_{\rm con}}K_0^{-1}e^{-m(\beta+\lvert\alpha\rvert+\eps_1)}
	\le w(S_i)
	\le \delta_\c \mathfrak M^{t_{\rm con}},
\]	
where $\frak m$ and $\frak M$ are defined in \eqref{eq:defmdf}.

Similarly as in Corollary~\ref{corl.minmalwidth} define now 
\[
	 \ell_{m,\eps_1}
	 \eqdef 
	 \left\lceil\frac{\lvert t_{\rm con}\log\mathfrak m-\log 8 -\log K_0
	 						-m(\beta+\lvert\alpha\rvert+\eps_1)\rvert}
						{\log\lambda_{\rm bh}}
						+C\right\rceil+1 .
\]	
Applying then Proposition~\ref{p.c.coveringproperty}, we get a universal constant $C>0$ such that for every $i\in\{1,\ldots,\card \mathfrak X\}$ and for every $\ell\ge \ell_{m,\eps_1}$ there is a subset $ S_i'\subset S_i$ such that
\begin{itemize}
\item $f^k( S_i')$ is contained in $\mathbf C^+_f$ for every $k\in\{0,\ldots,\ell\}$ and
\item $f^\ell( S_i')$ is a $\c$-complete $\u$-strip.
\end{itemize}
Observe that 
\begin{equation}\label{eq:invocar}
	\ell_{m,\eps_1}\log\lambda_{\rm bh}\sim m(\beta+\lvert\alpha\rvert+\eps_1).
\end{equation}

The construction of the full rectangles $C_i$ is again as in Section~\ref{sss.finalconstruction}. Analogously, one verifies that they are pairwise disjoint. 
Define as before $N_{m,\eps_1}\eqdef 2t_{\rm con}+m+\ell_{m,\eps_1}$. One defines the set $\Gamma^+$ as in Section~\ref{sss.Gamma}.
As in Remark~\ref{r:maisumacentral}, it is a consequence that the point $f^{t_{\rm con}}(y)$, for any $y\in C_i$, satisfies for each $k\in\{0,\dots,m\}$,
\[
	K_0^{-1} e^{- k(\lvert\alpha\rvert+ \eps_1)}
	\le \lVert Df^k|_{E^\c_{f^{t_{\rm con}}(y)}}\rVert
	\le K_0 e^{-k (\lvert\alpha\rvert- \eps_1)}.
\]
Taking into account twice the transition time $t_{\rm con}$ and the blending time $\ell=\ell_{m,\eps_1}$, we obtain
\[%\begin{equation}\label{e.centralderivatives}
	K_0^{-1} e^{-m(\lvert\alpha\rvert+ \eps_1)}  \mathfrak{m}^{2t_{\rm con}}  \lambda_{\rm bh}^\ell
	\le \lVert Df^{N_{m,\eps_1}}|_{E^\c_y}\rVert.
\]%\end{equation}
Hence, we obtain
\[
\begin{split} 
 	\frac{1}{N_{m,\eps_1}}\log\,&\lVert Df^{N_{m,\eps_1}}|_{E^\c_y}\rVert\\
	&\ge \frac{-\log K_0 -m(\lvert\alpha\rvert+ \eps_1) 
			+2t_{\rm con}\log \mathfrak{m} 
			+\ell_{m,\eps_1}\log \lambda_{\rm bh}}
			{2t_{\rm con}+m+\ell_{m,\eps_1}}\\
	&\eqdef C(m,\eps_1)	.	
\end{split}
\]
Finally, using now~\eqref{eq:invocar}, note that
\[
	C(m,\eps_1)
	\sim \frac{-m(\lvert\alpha\rvert+ \eps_1) +m(\beta+\lvert\alpha\rvert+\eps_1)}
		{m+\frac{1}{\log\lambda_{\rm bh}}m(\beta+\lvert\alpha\rvert+\eps_1)}
	\sim\frac{\beta}{1+\frac{1}{\log\lambda_{\rm bh}}(\beta+\lvert\alpha\rvert)}
	>0	.
%	\ge\frac{\beta-\eps_D^\ss}
%		{1+\frac{1}{\log\lambda_{\rm bh}}(\beta+\lvert\alpha\rvert)}.	
\]
This sketches the proof that $\Gamma^+$ is central expanding and provides the claimed lower bound for the Lyapunov exponents of any orbit and hence any measure supported on $\Gamma^+$. The upper bound is analogous.

In order to estimate the entropy of $f$ on $\Gamma^+$, observe 
\[
	h_{\rm top}(f,\Gamma^+)
	\sim\frac{\log\card\mathfrak X}{N_{m,\eps_1}}
	\sim \frac{m(h(\mu)-\eps_H)}{m+\ell_{m,\eps_1}}  
	\sim\frac{h(\mu)}
	{1+\frac{1}{\log\lambda_{\rm bh}}(\beta+\lvert\alpha\rvert)}.	
\]

Finally, to argue about the weak$\ast$ approximation of the given measure $\mu$ as in Section~\ref{ss.birkhoff}, observe that $\Gamma^+$ is built very close to the orbit pieces of $\mu$-generic points provided from the skeleton. Thus, orbital measures in the skeleton arbitrarily well approximate  $\mu$. The return time from one Markov rectangle into another one is $N_{m,\eps_1}= 2t_{\rm con}+m+\ell_{m,\eps_1}$. Thus, any ergodic invariant measure supported on $\Gamma^+$ has generic orbits that always stay a fraction $m/(2t_{\rm con}+m+\ell_{m,\eps_1})$ of its time as close to $\mu$ as desired. Hence, the weak$\ast$-deviation from $\mu$ is of order
\[
	\frac{\ell_{m,\eps_1}}{m+\ell_{m,\eps_1}}
	\sim \frac{\frac{1}{\log\lambda_{\rm bh}}m(\beta+\lvert\alpha\rvert)}
			{m+\frac{1}{\log\lambda_{\rm bh}}m(\beta+\lvert\alpha\rvert)}.
\] 
Letting $K(f)=1/\log\lambda_{\rm bh}$, finishes the sketch of the proof of the theorem.
\hfill\qed

%%%%%%%%%%%%%%%%%%%%%%%%%%%%%%%%%%%%%
\section{Arcwise connectedness: Proof of Theorem \ref{teocor:simplleexx}}\label{sscor:simplleexx}
%%%%%%%%%%%%%%%%%%%%%%%%%%%%%%%%%%%%%

For completeness, recall that the \emph{homoclinic class} of  a hyperbolic periodic point $Q_f$ of a diffeomorphism $f$, denoted by $H(Q_f,f)$, is the closure of the set of  transverse intersection points of the stable and unstable manifolds of the orbit of $Q_f$. Two hyperbolic periodic  points $P_f$ and $Q_f$ of $f$ are \emph{homoclinically related} if the stable and unstable manifolds of their orbits  intersect cyclically and transversely. The homoclinic class of $Q_f$ can  also be defined as the closure of the periodic points of $f$ that are homoclinically related to $Q_f$. A homoclinic class is a \emph{transitive set} whose periodic points  form a dense subset of it. The \emph{stable index} of a hyperbolic periodic point is the dimension of its stable bundle. Note that, in our partially hyperbolic context with one-dimensional center bundle, there are only two types of hyperbolic periodic points: those with stable index $s$ and those with $s+1$. The fact that the central bundle is one-dimensional also forces the intersection between invariant manifolds of hyperbolic periodic points of the same index to be  transverse.

Now recall that by \cite[Proposition 7.1]{BocBonDia:18} there is an open and dense subset of $\cRTPH^1(M)$ consisting of diffeomorphisms $f$ having a pair of saddles $P_f$ and $Q_f$ of stable index $s$ and $s+1$, respectively, such that the homoclinic classes satisfy 
\begin{equation}\label{eq:noproblems}
	H(P_f,f)=M=H(Q_f,f).
\end{equation}	 
This result just summarizes previous ones in \cite{BonDiaUre:02,BonDiaPujRoc:03,RodRodUre:07}. Moreover, the minimality of both the strong stable and the strong unstable foliations together with the partial hyperbolicity immediately imply that every pair of  saddles of the same index is homoclinically related. In this case, the fact that the homoclinic class is isolated is immediate and thus, we are in the setting of \cite{GorPes:17}. Thus, in what follows we will assume that the set $\cORTPH^1(M)$ additionally satisfies \eqref{eq:noproblems} and fix $f\in\cORTPH^1(M)$. 

Recall that the \emph{convex hull} of a set $\cN\subset\cM(f)$ is the smallest convex set containing $\cN$, denoted by $\conv(\cN)$, and that the \emph{closed convex hull} of $\cN$ is the smallest closed convex set containing $\cN$, denoted by $\cconv(\cN)$. By \cite[Theorem 5.2 (i)--(ii)]{Sim:11}, we have $\overline{\conv(\cN)}=\cconv(\cN)$, where $\overline{\cN}$ denotes the weak$\ast$ closure of $\cN$. Our hypotheses imply that we can invoke \cite[Theorem 2]{BocBonGel:}, that is, every $\mu\in\conv(\cM_{\rm erg,<0}(f))$ can be approximated in the weak$\ast$ topology by ergodic measures (in $\cM_{\rm erg,<0}(f)$). In other words,  we have
\[
	\cM_{\rm erg,<0}(f)
	\subset	\conv(\cM_{\rm erg,<0}(f))
	\subset\overline{\cM_{\rm erg,<0}(f)}
\]
(the first inclusion is trivial) and hence, taking closures and applying the above comment, we can conclude 
\[
	\cconv(\cM_{\rm erg,<0}(f))
	=\overline{\conv(\cM_{\rm erg,<0}(f))}
	=\overline{\cM_{\rm erg,<0}(f)}.
\]	 
Analogously for $\cM_{\rm erg,>0}(f)$.
On the other hand, by Theorem~\ref{t.approx}, we have
\[\begin{split}
	{\cM_{\rm erg,0}(f)}	
	&\subset \overline{\cM_{\rm erg,<0}(f)}\cap\overline{\cM_{\rm erg,>0}(f)}\\
	&=\cconv(\cM_{\rm erg,<0}(f))\cap\cconv(\cM_{\rm erg,>0}(f)),
\end{split}\]
which proves the first claim in the corollary.

We now prove that the set of measures $\cM_{\rm erg,<0}(f)$ is arcwise connected, the proof for $\cM_{\rm erg,>0}(f)$ is analogous. Here we will largely follow arguments in \cite{GorPes:17}, see also the presentation in \cite[Section 3.1]{DiaGelRam:17b}. 
Take any pair of measures $\mu^0,\mu^1\in\cM_{\rm erg,<0}(f)$. By Corollary~\ref{cor:2}, each $\mu^i$ is accumulated by a sequence of hyperbolic periodic measures $\nu_n^i\in\cM_{\rm erg,<0}(f)$ supported on periodic points $P_n^i$, $i=0,1$ respectively. By assumption, these points are homoclinically related and hence, there exists a basic set $\Gamma=\Gamma(\mu^0,\mu^1)$ containing them. Recall from \cite{LinOlsSte:78,Sig:74} that  $\cM(f,\Gamma)$ is a Poulsen simplex. Hence  there is a continuous arc $\mathfrak m_0\colon[1/3,2/3]\to\cM_{\rm erg}(f,\Gamma)\subset \cM_{\rm erg,<0}(f)$ joining the measures $\nu_1^0=\mathfrak m_0(1/3)$ and $\nu_1^1=\mathfrak m_0(2/3)$. For any pair of measures $\nu_n^0,\nu_{n+1}^0$, the same arguments apply and, in particular, there exists a continuous arc $\mathfrak m_n^0\colon[1/3^{n+1},1/3^n]\to\cM_{\rm erg,<0}(f)$ joining the measure $\nu_n^0=\mathfrak m_n^0(1/3^{n+1})$ with $\nu_{n+1}^0=\mathfrak m_n^0(1/3^n)$. 
Using those arcs and concatenating   (appropriate parts of) their domains, we obtain  an arc $\bar {\mathfrak m}_n^0\colon[1/3^{n+1},1/3]\to \cM_{\rm erg,<0}(f)$ joining $\nu_{n+1}^0=\bar{\mathfrak m}_n^0(1/3^{n+1})$ and $\nu_{1}^0=\bar{\mathfrak m}_n^0(1/3)$.
The same applies to the measures $\nu_n^1$, defining arcs $\bar{ \mathfrak m}_n^1 \colon[1-1/3^n,2/3]\to\cM_{\rm erg,<0}(f)$ joining $\nu_{n+1}^1$ and $\nu_{1}^1$. 
 Define now $\mathfrak m_\infty|_{(0,1)}\colon(0,1)\to\cM_{\rm erg,<0}(f)$ by concatenating (appropriate parts of) the domains of those arcs. We  complete the definition of an arc $\mathfrak m_\infty\colon[0,1]\to\cM_{\rm erg,<0}(f)$ by letting $\mathfrak m_\infty(0)=\lim_{n\to\infty}\bar {\mathfrak m}_n^0(1/3^n)$ and $\mathfrak m_\infty(1)=\lim_{n\to\infty}\bar {\mathfrak m}_n^1(1-1/3^n)$. By definition, $\mathfrak m_\infty$ joins $\mu^0$ and $\mu^1$. Note that in the last step we assume that $\mu^0,\mu^1$ do not belong to the image of $\mathfrak m_\infty$, if one of these measures does belong it is enough to cut the domain of definition of $\mathfrak m_\infty$ appropriately.

The analogous construction can be done to construct an arc connecting any measure in $\cM_{\rm erg,0}(f)$ to any measure in $\cM_{\rm erg,>0}(f)$ using  Theorem \ref{t.approx} and then the second part of Corollary~\ref{cor:2}.
\hfill\qed
 
\bibliographystyle{plain}
\bibliography{refs}
\end{document}